
\documentclass[a4paper,10pt]{article}
\usepackage{amsmath,amssymb,amsthm,dsfont}
\usepackage[english]{babel}

\voffset=0mm      \hoffset=-10mm \textwidth=150mm \textheight=205mm


\newtheorem{theorem}{Theorem}[section]
\newtheorem{lemma}[theorem]{Lemma}
\newtheorem{proposition}[theorem]{Proposition}
\newtheorem{corollary}[theorem]{Corollary}
\theoremstyle{definition}
\newtheorem{definition}[theorem]{Definition}
\newtheorem{remark}[theorem]{Remark}
\newtheorem{example}[theorem]{Example}
\newtheorem{assumption}[theorem]{Assumption}






\newcommand{\R}{\mathbb R}                   

\newcommand{\C}{\mathbb C}                   


\newcommand{\scal}[1]{\langle #1 \rangle}






\DeclareMathOperator*{\slim}{s-lim}

\begin{document}

\title{\sc Uniform resolvent and Strichartz estimates for Schr\"odinger equations with critical singularities}

\author{Jean-Marc Bouclet    \and Haruya Mizutani  }

\AtEndDocument{\bigskip{\footnotesize
\textsc{Institut de Math\'ematiques de Toulouse (UMR CNRS 5219),  Universit\'e Paul Sabatier, 118 route de Narbonne, F-31062 Toulouse FRANCE} \par\textit{E-mail address}: \texttt{jean-marc.bouclet@math.univ-toulouse.fr} \par}}
\AtEndDocument{\bigskip{\footnotesize
\textsc{Department of Mathematics, Graduate School of Science, Osaka University, Toyonaka, Osaka 560-0043, Japan} \par\textit{E-mail address}: \texttt{haruya@math.sci.osaka-u.ac.jp} \par}}


\date{ }

\maketitle

\begin{abstract}
This paper deals with global dispersive properties of Schr\"odinger equations with real-valued potentials  exhibiting critical singularities, where our class of potentials is more general than inverse-square type potentials and includes several anisotropic potentials. We first prove weighted resolvent estimates, which are uniform with respect to the energy, with a large class of weight functions in Morrey-Campanato spaces. Uniform Sobolev inequalities in Lorentz spaces are also studied. The proof employs the iterated resolvent identity and a classical multiplier technique. As an application, the full set of global-in-time Strichartz estimates including the endpoint case is derived. In the proof of Strichartz estimates, we develop a general criterion on perturbations ensuring that both homogeneous and inhomogeneous endpoint estimates can be recovered form  resolvent estimates. Finally, we also investigate uniform resolvent estimates for long range repulsive potentials with critical singularities by using an elementary version of the Mourre theory.

\end{abstract}



\section{Introduction}
\setcounter{equation}{0}
Given a self-adjoint operator $H$ on a Hilbert space $\mathcal H$ and $z\in \rho(H)$, the resolvent $(H-z)^{-1}$ is  a bounded operator on $\mathcal H$ and satisfies $$||(H-z)^{-1}||_{\mathcal H\to \mathcal H}=\frac{1}{\mathrm{dist}(z,\sigma(H))}$$ 
by the spectral theorem. Hence there is no hope to obtain the estimate in the operator norm sense which is uniform with respect to $z$ close to the spectrum of $H$. However, uniform estimates in $z$ can be recovered for many important operators by considering, {\it e.g.}, the weighted resolvent $w(H-z)^{-1}w^*$ with an appropriate closed operator $w$. Such {\it uniform resolvent estimates}  play a fundamental role in the study of broad areas including spectral and scattering theory for Schr\"odinger equations. In particular, as observed by Kato \cite{Kato} and Rodnianski-Schlag \cite{RoSc}, uniform resolvent estimates are closely connected to global-in-time dispersive estimates such as time-decay estimates or Strichartz estimates which are important tools in the scattering theory for nonlinear dispersive partial differential equations, see monographs \cite{Caz,Tao}. 

In this paper we study uniform resolvent estimates and their applications to global-in-time Strichartz estimates for Schr\"odinger operators $$H=-\Delta+V(x)$$ on $L^2(\R^n)$ with real-valued potentials  $V(x)$ exhibiting critical singularities, where $\Delta$ is the usual Laplacian. Typical examples of critical potentials we have in mind are inverse-square type potentials, {\it i.e.}, $ |x|^2 V \in L^{\infty}$, which represent a borderline case for the validity of these estimates (see \cite{Duy1,GVV}). Note however that our class of potentials includes several examples so that $|x|^2V\notin L^\infty$. 

If $V$ decays sufficiently fast at infinity and has enough regularity, say $V$ has a finite global Kato norm (see \cite{RoSc}), then there is a vast literature on both uniform resolvent estimates with various type of weights $w$ and their applications to global-in-time Strichartz estimates under certain regularity conditions on the zero energy, see \cite{JeKa,JeNe,GoSc1,RoTa} for resolvent estimates and  \cite{RoSc,BaDu,DaFa,EGS1,EGS2,Gol,DaFa,MMT,DFVV,Bec} for Strichartz estimates, and references therein. On the other hand, when $V$ has at least one critical singularity and decays like $|x|^{-2}$ at infinity, although there are still many results on resolvent estimates (see \cite{PeVe,BPST2,Fan,Mochizuki,BVZ} and references therein), the choice of $w$ has been limited to a specific type of weights 
which restricts the range of applications. In particular, in contrast to the case of inverse-square type potentials (for which we refer to \cite{PST1,PST2,BPST1,BPST2,KoTr,FFFP} and references therein), there seems to be no previous literature on global-in-time Strichartz estimates for large potentials with critical singularities which are not of inverse-square type (see a recent result \cite{KiKo} for small potentials with critical singularities). Finally, if $V$ has at least one critical singularity and decays slower than $|x|^{-2}$ at infinity, there seems to be no positive results on both uniform resolvent and global-in-time dispersive estimates, while there are several positive results on resolvent estimates if $V$ is less singular  (see \cite{Nak,FoSk}). 

In the light of those observations, the purpose of this paper is twofold. 

The first purpose is to investigate uniform estimates for the weighted resolvent $w(H-z)^{-1}w$ with potentials $V$ exhibiting critical singularities and with a wide class of weight functions $w$ in Morrey-Campanato spaces. We also consider uniform estimates for $(H-z)^{-1}$ in $L^p$ spaces (or more generally, Lorentz spaces),  known as uniform Sobolev inequalities which are due to \cite{KRS} for constant coefficient operators. Our admissible class of potentials includes several anisotropic potentials, which are more general than inverse-square type potentials, so that $V$ can have a critical singularity of type $|x|^{-2}$ at the origin and multiple Coulomb type singularities away from the origin. 

As an application, we show the full set of global-in-time Strichartz estimates (including both homogeneous and inhomogeneous endpoint cases) for the above class of potentials, which improves upon the previous references \cite{BPST1,BPST2,KiKo} in the following directions. On one hand, we can consider a larger class of admissible potentials with critical singularities. More importantly, we provide a general criterion on potentials  ensuring that both {\it homogeneous and inhomogeneous endpoint} Strichartz estimates can be recovered from uniform resolvent estimates. More precisely we develop an abstract smooth perturbation method which enables us to deduce the full set of Strichartz estimates for the perturbed operator $H$ from corresponding estimates for the unperturbed operator $H_0$ and the uniform Sobolev inequality for the resolvent $(H-z)^{-1}$. This extends the previous techniques by \cite{RoSc,BPST2,BaDu} to a quite general setting. 

Another important problem is to investigate the validity of global-in-time  Strichartz estimates for Schr\"odinger operators with long range potentials with singularities (e.g. in the Coulombic case) in view of their applications to the study of long-time behaviors of the Hartree equation with external potentials, which is a nonlinear model for the quantum dynamics of an atom. As a step toward this problem, the second purpose of the paper is to consider resolvent estimates for long range repulsive potentials with critical singularities. More specifically, we show how some elementary version of the Mourre theory can be used to obtain uniform resolvent estimates in this strongly singular case (the potentials and weight functions in \cite{Nak,FoSk} were not as singular as ours).

Finally, we mention several possible applications of the results in this paper. As already observed, our Strichartz estimates could be used to study scattering theory for nonlinear Schr\"odinger equations with singular potentials. For recent results in this context, we refer to \cite{ZhZh,KMVZZ,KMVZ} in which the case with the inverse-square potential was studied. Another range of applications, which will be considered in a subsequent work \cite{Miz1}, is about eigenvalues estimates for Schr\"odinger operators with complex-valued potentials. As already observed by \cite{Fra}, uniform resolvent estimates with singular weights are an important input in the derivation of eigenvalues bounds with singular potentials.

\section{Notation and main results} \label{sectionresults}
\setcounter{equation}{0}

Let us introduce the class of potentials we will use. We distinguish the dimension $n=2$ from the case $ n \geq 3 $. For  $ 1 \leq \sigma \leq q < \infty$, we consider the Morrey-Campanato norms
$$ || W ||_{M^{q,\sigma}} := \sup_{x \in {\mathbb R}^n \atop r > 0}  r^{\frac{n}{q}} \Big( r^{-n} \int_{|y-x|<r} |W(y)|^{\sigma} dy \Big)^{\frac{1}{\sigma}} . $$
The space $ M^{q,\sigma} $ is the set of measurable functions with finite $ || \cdot ||_{M^{q,\sigma}} $ norm.
For $ 1 \leq q , \sigma \leq \infty $, we will use the Lorentz norms
$$ || W ||_{L^{q,\sigma}} =  \big| \big| s^{\frac{1}{q} - \frac{1}{\sigma}} W^*  \big| \big|_{L^{\sigma}  ((0,\infty),ds) }$$
where $ W^* (s) $ is the decreasing rearrangement of $W$ (see paragraph \ref{sectionnotationLorentz} below for basic properties of Morrey-Campanato and Lorentz spaces).  
We simply recall here that these norms have the same scaling as the usual $ L^q $ norm, namely they are invariant under the scaling $ W (x) \mapsto \lambda^{\frac{n}{q}} W (\lambda x) $. Also note that $L^{q,\infty}\subset M^{q,\sigma}$ if $1\le q<\infty$ and $1\le \sigma<q$. Let us set
\begin{align*}
{\mathcal X}_n^{\sigma} &:= \big\{ V:\R^n\to\R \ | \ |x| V \in M^{n,2\sigma} \ \mbox{and} \ x \cdot \nabla V \in M^{\frac{n}{2},\sigma} \big\}\quad\text{if $  n \geq 3 $ and $  {(n-1)}/{2} < \sigma \leq {n}/{2} $},\\
{\mathcal X}_2 &:= \big\{ V:\R^2\to\R \ | \ |x|^2 (x \cdot \nabla)^{\ell} V \in L^{\infty}({\mathbb R}^2), \ \ \ell = 0 , 1,2 \big\}\quad\text{if $n=2$}. 
\end{align*}
\begin{assumption} ($ n \geq 3 $)
\label{assumption_1}
 There exists $\delta_0>0$ such that for all $f\in C_0^ {\infty}(\R^n \setminus 0) $,
\begin{align}
\label{assumption_1_1}
 \langle (-\Delta+V)f,f \rangle &\ge \delta_0 ||\nabla f||_{L^2}^2,\\
\label{assumption_1_2}
\langle (-\Delta-V-x\cdot (\nabla V))f,f \rangle &\ge \delta_0||\nabla f ||_{L^2}^2. 
\end{align}
Here and below, $ \langle f,g \rangle = \int f (x) \overline{g(x)}dx $ is the usual $ L^2 $ inner product.
\end{assumption}

\begin{example} ($n\ge3$) 
\label{example}
A typical example satisfying Assumption \ref{assumption_1} is the inverse-square potential $-c_0|x|^{-2}$ with $c_0<(n-2)^2/4$. Our class also includes inverse-square type potentials $V$ such that 
$$
|x|^2V\in L^\infty,\quad |x|^2x\cdot\nabla V\in L^\infty,\quad 
V\ge -c_0|x|^{-2},\quad
-V-x\cdot\nabla V\ge -c_0|x|^{-2}.
$$ 
In these cases \eqref{assumption_1_1} and \eqref{assumption_1_2} follow from classical Hardy's inequality:
$$
\frac{(n-2)^2}{4}\big|\big||x|^{-1}f\big|\big|_{L^2}^2\le ||\nabla f||_{L^2}^2,\quad f\in C_0^\infty(\R^n\setminus0).
$$
Moreover,  we have $V\in \mathcal X^\sigma_n\cap L^{\frac n2,\infty}$ since $|x|^{-1}\in L^{n,\infty} \subset M^{n,2\sigma}$ for all $1\le \sigma<n/2$. 

Assumption \ref{assumption_1} is actually more general enough to accommodate several anisotropic potentials so that $|x|^2V\notin L^\infty$. For instance, we let $c_1,c_2>0$, $\alpha\in\R^n$ and $\chi\in C^1(\R)$ such that $0\le \chi\le 1$ and $|\chi^{(k)}(t)|\le |t|^{-k-1}$ for $|t|\ge1$. Define
$$
V(x)=\Big(-\frac{(n-2)^2}{4}+c_1\Big)|x|^{-2}-c_2\chi(|x-\alpha|)|x-\alpha|^{-1}. 
$$
Then $V\in \mathcal X_n^\sigma\cap L^{\frac n2,\infty}$ and $V$ satisfies Assumption \ref{assumption_1} with $\delta_0=c_1-c_2(2+\sup|\chi'|)(|\alpha|+1)$ if 
$
0<c_2<{c_1}{(2+\sup|\chi'|)^{-1}(|\alpha|+1)^{-1}}
$. One can also consider multiple Coulomb type singularities. 
\end{example}

\begin{assumption} ($n = 2$) There exists $ \delta_0 > 0 $ such that, almost everywhere on  $ {\mathbb R}^2  $,
\label{assumption_2}
\begin{equation}
\begin{aligned}
 \label{positifdim2}
 V>0, \quad
 \delta_0^{-1 }V  \geq -x \cdot \nabla V \geq (1+\delta_0)V. 
\end{aligned}
\end{equation}
Furthermore, $V^{-1}$ is locally integrable in $\R^2$. 
\end{assumption}

\begin{example} ($n=2$) 
A typical example of $V\in \mathcal X_2$ satisfying Assumption \ref{assumption_2} is $V=V_1+V_2$ such that
\begin{align*}
V_1(x)=a(\theta)r^{-\nu}\langle r\rangle ^{\nu-\mu},\quad 
V_2(x)=a(\theta)r^{-2}(1+(\log r)^2)^{-\delta},\quad r=|x|,\ \theta=x/r,
\end{align*}
where $a\in L^\infty(\mathbb S^1)$ such that $a>c_0$ on $\mathbb S^1$ with some $c_0>0$, $\mu\ge 2$, $\nu\in(1,2]$ and $\delta\ge0$.  Indeed, 
$$
-x\cdot \nabla V_1=\Big(\mu-\frac{\mu-\nu}{1+r^2}\Big)V_1\ge \nu V_1,\quad
-x\cdot \nabla V_2=\Big(2+\delta-\frac{\delta}{1+(\log r)^2}\Big)V_2\ge(2+\delta) V_2.
$$
Furthermore, if $\mu>2,\nu\in(1,2)$ and $\delta>1/2$ then $V_1,V_2\in L^1(\R^2)$. \end{example}

Let us note that both $\mathcal X^\sigma_n$ and $\mathcal X_2$ are invariant by the scaling 
\begin{align}
V (x) \mapsto \lambda^{-2} V (x/\lambda),\quad \label{scalingofpotentials} \lambda>0,
\end{align}
 in the sense that all of norms $ \big| \big| |x| V \big| \big|_{M^{n,2\sigma}} $, $ || x \cdot \nabla V ||_{M^{\frac{n}{2},\sigma}} $ and $ \big| \big|  |x|^2 (x \cdot \nabla)^{\ell} V  \big| \big|_{L^{\infty}} $ are invariant under \eqref{scalingofpotentials}.
Both Assumptions \ref{assumption_1} and \ref{assumption_2} are also invariant under the scaling \eqref{scalingofpotentials}. More precisely, if one of them is satisfied by some $V$, it is still satisfied by $ \lambda^{-2}V(x/\lambda) $ with the same constant $ \delta_0 $. According to this  invariance, all estimates in theorems and corollaries in this section (except Theorem \ref{theorem_3} and Corollary \ref{Corollaire-Mourre}) are invariant under the scaling \eqref{scalingofpotentials}. 

In the sequel, we let $ H$ be the self-adjoint realization of $ - \Delta + V $ defined  in paragraph \ref{realizations}. 
The first result is on uniform weighted resolvent estimates in $L^2$: 

\begin{theorem}[Uniform weighted resolvent estimates] $ $	
\label{theorem_0}

\smallskip

\noindent {\rm (1)} Suppose $n\ge3$ and   $\frac{n-1}{2}<\sigma \leq \frac n2$. Let $V \in {\mathcal X}_n^{\sigma}$ satisfy Assumption \ref{assumption_1}. Then, for any $w_1,w_2\in M^{n,2\sigma}(\R^n)$, $z \in \C \setminus {[0,\infty)}$ and $f \in C_0^{\infty}({\mathbb R}^n)$
\begin{align}
\label{theorem_0_1}
\big| \big| w_1 (H-z)^{-1} w_2   f \big| \big|_{L^2(\R^n)} \leq C||w_1||_{M^{n,2\sigma}}||w_2||_{M^{n,2\sigma}} || f ||_{L^2(\R^n)} 
\end{align}
with some constant $C>0$ independent of $w_1,w_2,f$ and $z$. 
\smallskip

\noindent {\rm (2)} Suppose $n=2$ and $V \in {\mathcal X}_2$ satisfies Assumption \ref{assumption_2}. Then 
 $$  || V^{\frac{1}{2}} (H-z)^ {-1} V^{\frac{1}{2}} f ||_{L^2(\R^2)}  \leq C || f ||_{L^2(\R^2)}, \quad z \in \C \setminus {[0,\infty)}, \ f \in C_0^{\infty}({\mathbb R}^2 \setminus 0)    . $$
\end{theorem}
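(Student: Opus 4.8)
The plan is to reduce both inequalities to a $z$-uniform a priori estimate for solutions of $(H-z)u=g$ and to obtain the latter by the classical multiplier (Morawetz) technique applied directly to $H$; the corresponding weighted estimate for the free resolvent $R_0(z)=(-\Delta-z)^{-1}$ is the $V=0$ prototype of this, and the iterated resolvent identity provides a perturbative route.

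\emph{Reduction.} Since \eqref{assumption_1_1} yields $H\ge\delta_0(-\Delta)\ge0$ for $n\ge3$ and $V>0$ in \eqref{positifdim2} yields $H\ge-\Delta\ge0$ for $n=2$, in both cases $\sigma(H)\subset[0,\infty)$, so $(H-z)^{-1}$ is bounded for $z\in\C\setminus[0,\infty)$ and $u:=(H-z)^{-1}w_2f$ (resp.\ $(H-z)^{-1}V^{1/2}f$) lies in the domain of $H$. For $n\ge3$ I would use Hölder-type inequalities between Morrey--Campanato and $L^2$ spaces, valid exactly in the range $\tfrac{n-1}{2}<\sigma\le\tfrac n2$: $\|wv\|_{L^2}\lesssim\|w\|_{M^{n,2\sigma}}\|v\|_{Y}$ and, dually, $\|wv\|_{Y^{*}}\lesssim\|w\|_{M^{n,2\sigma}}\|v\|_{L^2}$, where $Y$ is the appropriate $\sigma$-variant of the Agmon--Hörmander space and $Y^{*}$ its dual. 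These reduce \eqref{theorem_0_1} to the bound $\|u\|_{Y}\lesssim\|g\|_{Y^{*}}$ for $(H-z)u=g$, uniformly in $z$. Alternatively, one may argue perturbatively: the free resolvent satisfies $\|w_1R_0(z)w_2\|_{L^2\to L^2}\lesssim\|w_1\|_{M^{n,2\sigma}}\|w_2\|_{M^{n,2\sigma}}$ (the multiplier method for $-\Delta$ together with the same Hölder inequalities; here $V$ plays no role), and then the iterated resolvent identity $(H-z)^{-1}=R_0-R_0VR_0+R_0V(H-z)^{-1}VR_0$, combined with $|V|^{1/2}\in M^{n,2\sigma}$, reduces everything to the single smoothing estimate $\||V|^{1/2}(H-z)^{-1}|V|^{1/2}\|_{L^2\to L^2}\lesssim1$.

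\emph{The multiplier computation.} For $n\ge3$, I would pair $(H-z)u=g$ with a self-adjoint Morawetz multiplier $Mu=a(r)\partial_ru+\tfrac12\big(a'+\tfrac{n-1}{r}a\big)u$ ($a$ radial, bounded, nondecreasing) together with the dilation generator $A=x\cdot\nabla+\tfrac n2$. From $-\Delta$ one gets the standard nonnegative terms ($\int a'|\partial_ru|^2$, the angular term, a nonnegative contribution at the origin) and, via $[-\Delta,A]=-2\Delta$, control of $\|u\|_Y^2$ and of the trace of $u$ on spheres. The potential contributes $-\tfrac12\int(a\,\partial_rV+\cdots)|u|^2$ and $\langle[H,A]u,u\rangle=2\|\nabla u\|_{L^2}^2-\langle(x\cdot\nabla V)u,u\rangle$; Assumption~\ref{assumption_1} is tailored precisely to absorb this, since $2\|\nabla u\|_{L^2}^2-\langle(x\cdot\nabla V)u,u\rangle=\langle(-\Delta+V)u,u\rangle+\langle(-\Delta-V-x\cdot\nabla V)u,u\rangle\ge2\delta_0\|\nabla u\|_{L^2}^2$ by \eqref{assumption_1_1} and \eqref{assumption_1_2}, while the genuinely lower-order potential terms are controlled by a Morrey--Campanato (Fefferman--Phong) inequality $\int|V||u|^2\lesssim\||x|V\|_{M^{n,2\sigma}}\|\nabla u\|_{L^2}^2$ (note that $|x|V\in M^{n,2\sigma}$ gives $V\in M^{n/2,\sigma}$). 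The dilation piece kills the $z$-term in the real part and the imaginary parts pin down $\operatorname{Im}z\,\|u\|^2$ with the favorable sign, so the constants are $z$-independent; the cross term $\langle g,Mu\rangle$ is handled by Cauchy--Schwarz in $Y^{*}\times Y$ using $\|Mu\|_Y\lesssim\|u\|_Y$ (here boundedness of $a$ is essential) and then absorbed, giving $\|u\|_Y\lesssim\|g\|_{Y^{*}}$. For $n=2$ the scheme is the same but Hardy's inequality is missing, so $V>0$ must carry the structure; one works with $f\in C_0^\infty(\R^2\setminus 0)$ and uses $V^{-1}\in L^1_{\mathrm{loc}}$ to handle the origin, where the $2$D Laplacian is logarithmically singular. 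Now \eqref{positifdim2} gives $\langle[H,A]u,u\rangle=2\|\nabla u\|_{L^2}^2-\langle(x\cdot\nabla V)u,u\rangle\ge(1+\delta_0)\langle Vu,u\rangle=(1+\delta_0)\|V^{1/2}u\|_{L^2}^2$, whence $\|V^{1/2}u\|_{L^2}^2\lesssim|\langle f,V^{1/2}Au\rangle|+(\operatorname{Im}z\text{ terms})$; the upper bound $-x\cdot\nabla V\le\delta_0^{-1}V$, together with an integration by parts, bounds $\|V^{1/2}Au\|_{L^2}$ (equivalently $\|V^{1/2}\,x\cdot\nabla u\|_{L^2}$) by $\|V^{1/2}u\|_{L^2}$ plus the good $\|\nabla u\|_{L^2}$-terms, which closes the estimate uniformly in $z$.

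\emph{Main obstacle.} I expect the delicate part to be this multiplier computation for $H$ in the Morrey--Campanato framework, not the algebraic reductions. For $n\ge3$ the issue is to control the cross term $\langle g,Mu\rangle$ and the lower-order potential terms uniformly over the whole range $\tfrac{n-1}{2}<\sigma\le\tfrac n2$ — the lower endpoint is exactly the threshold for the embedding $M^{n,2\sigma}\cdot Y\hookrightarrow L^2$, so there is no slack — while keeping the constant dependent on $V$ only through $\||x|V\|_{M^{n,2\sigma}}+\|x\cdot\nabla V\|_{M^{n/2,\sigma}}$ and $\delta_0$. For $n=2$ the difficulty shifts to the bookkeeping near the origin and to the bound for $\|V^{1/2}\,x\cdot\nabla u\|_{L^2}$, where the two-sided comparability $-x\cdot\nabla V\sim V$ is indispensable precisely because $|x|^2V$ need not be bounded. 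In both cases one must finally justify the limiting absorption principle — that these a priori bounds persist uniformly as $z\to[0,\infty)$, i.e.\ that $H$ has no positive eigenvalue and no zero-energy resonance — which follows from the strict positivity in Assumptions~\ref{assumption_1}--\ref{assumption_2} but still requires some care.
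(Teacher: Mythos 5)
Your overall architecture matches the paper's: a Morrey--Campanato--weighted estimate for the free resolvent, the twice-iterated resolvent identity, and one weighted estimate for $(H-z)^{-1}$ proved by a multiplier computation. But the step you defer as the ``main obstacle'' is precisely where the proof lives, and your sketch of it would not close as written. At positive energy $\lambda=\mathrm{Re}\,z$ the pairing of $(H-z)u=g$ with the dilation multiplier $Au$ produces the cross term $2\,\mathrm{Re}\int r f\,\overline{\partial_r u}\,dx$, and $\|r\partial_r u\|$ (or $\|Mu\|_Y$) is \emph{not} controlled by $\|u\|_Y$ or $\|\nabla u\|_{L^2}$ uniformly in $\lambda$: the solution oscillates like $e^{i\lambda^{1/2}r}$, so $\partial_r u$ carries an extra factor $\lambda^{1/2}$. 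The paper removes this by the gauge transformation $v_\lambda=e^{-i\lambda^{1/2}r}u$ and a specific linear combination of five integral identities (Lemmas \ref{lemma_appendix_C_1}--\ref{lemma_appendix_C_2}) in which the dangerous $\lambda^{1/2}$-terms cancel; your multiplier is moreover taken with $a$ bounded, whereas the identity actually needed uses the unbounded weight $r$. Without this device the constant in your a priori estimate degenerates as $\lambda\to\infty$. A second, related mismatch: you reduce to $\||V|^{1/2}(H-z)^{-1}|V|^{1/2}\|\lesssim 1$, which is itself an instance of \eqref{theorem_0_1} (with $w_1=w_2=|V|^{1/2}$), while the multiplier-plus-Hardy computation naturally yields the $|x|^{-1}$-weighted bound; the paper accordingly factors $V=(|x|V)\cdot|x|^{-1}$ with $|x|V\in M^{n,2\sigma}$ and puts $|x|^{-1}(H-z)^{-1}|x|^{-1}$ in the middle (Theorem \ref{theorem_resolvent_2main}), so the two halves of your argument do not quite connect.

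Two further points. The free-resolvent input $\|w_1R_0(z)w_2\|_{L^2\to L^2}\lesssim\|w_1\|_{M^{n,2\sigma}}\|w_2\|_{M^{n,2\sigma}}$ over the full range $\sigma>\frac{n-1}{2}$ is not obtained in the paper by a multiplier argument: it comes from Stein interpolation between a trivial bound for $(-\Delta-z)^{-it}$ and a kernel bound for $(-\Delta-z)^{-s-it}$ (Bessel asymptotics from \cite{KRS}) combined with the Sawyer--Wheeden two-weight estimate for fractional integrals (Proposition \ref{proposition_application_3}); you should not expect the multiplier method alone to reach two independent general weights at the endpoint exponent. Finally, no separate ``limiting absorption principle'' or absence-of-embedded-eigenvalues argument is needed: the a priori bounds are uniform in $z\in\C\setminus[0,\infty)$ by construction. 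What \emph{does} require care, and is absent from your plan, is the justification of the formal integrations by parts against $r\bar u$ and $A\bar u$ for the singular $V$ (the paper uses cutoffs $\chi(\delta r)$ and the regularization $A(\delta A^2+1)^{-1}$, plus a final approximation $w_j+\epsilon\langle x\rangle^{-2}$ to pass from positive weights with $w_j^{-1}\in L^2_{\rm loc}$ to general $w_j\in M^{n,2\sigma}$).
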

This theorem means that we have uniform estimates for $ w (H-z)^ {-1} w  $ (for $ n \geq 3 $) and $ V^{\frac{1}{2}} (H-z)^ {-1} V^{\frac{1}{2}}  $ (for $n=2$). To be completely rigorous, the uniform estimates hold for the closure of those weighted resolvents to $ L^2 $; indeed, in general the multiplication by $ w $ or $ V^{\frac{1}{2}} $ are not bounded on $ L^2 $ so the weighted resolvents can not be interpreted (for any fixed $z$) as compositions of bounded operators on $ L^2 $. For completeness, we record here that, for $n=3$, $ w\in L^2_{\rm loc} $ and hence $ w f \in L^2 $ whenever $ f\in C_0^{\infty}(\R^n) $. When $n=2$, $ V^{\frac{1}{2}} \in L^{\infty}_{\rm loc} ({\mathbb R}^2 \setminus 0) $ so  $ V^{\frac{1}{2}} f\in L^2 $ when $ f \in C_0^{\infty}({\mathbb R}^2 \setminus 0)$.

\begin{remark}
\label{remark_theorem_0}
When $w_1=w_2=|x|^{-1}$, \eqref{theorem_0_1} holds for more general potentials. We refer to  Theorem \ref{theorem_resolvent_2main} in Section \ref{section_proof} which improves upon the previous results by \cite{BPST2} (we do not assume $|x|^2V\in L^\infty$) and \cite{BVZ} (see Remark \ref{remark_C} in Appendix \ref{appendix_C}). Compared with this result, the interest of Theorem \ref{theorem_0} (1) is that our class of admissible weights is quite general and particularly includes the weight $w_1=w_2=|V|^{1/2}$. This fact is crucial to apply \eqref{theorem_0_1} to obtain estimates in $L^p$ spaces such as uniform Sobolev and Strichartz estimates (see below) with potentials involving multiple singularities as in Example \ref{example}. 

It is also worth noting that, in contrast to higher dimensional cases $n\ge3$, the two-dimensional free resolvent $(-\Delta_{\R^2}-z)^{-1}$ has a logarithmic singularity at $z=0$ (see, {\it e.g.}, \cite{JeNe}) and hence one cannot hope to obtain uniform estimates in $z$ with any kind of {\it physical} weight $w(x)$. Theorem \ref{theorem_0} (2) thus demonstrates a ``repulsive" effect of the potential $V$ satisfying Assumption \ref{assumption_2}. 
\end{remark}

Let $e^{-itH}$ be the unitary group generated by $H$. For $ F \in L^1_{\rm loc} ({\mathbb R};L^2) $, we define
$$ \Gamma_H F (t) = \int_0^t e^{-i(t-s)H} F (s) ds , $$
and call $ \Gamma_H $ the Duhamel operator associated to $H$. It is defined by means of the Bochner integral.
  Then, for $ \psi \in L^2 $ and $ F \in L^1_{\rm loc} ({\mathbb R};L^2) $, the unique (mild) solution $u(t)$ to the Schr\"odinger equation 
\begin{align}
\label{equation_1}
i\partial_tu=Hu+F(t);\quad u_{|_{t=0}}=\psi,
\end{align}
 is given by the Duhamel formula (see, \emph{e.g.}, \cite[Section 3]{Arendt})
\begin{align}
\label{equation_2}
u(t)=e^{-itH}\psi-i \Gamma_H F (t) . 
\end{align} 
Then Theorem \ref{theorem_0} implies the following result. As usual, when $ {\mathcal B} $ is a Banach space and $ p \geq 1 $, the norm $ || v ||_{L^p ({\mathbb R};{\mathcal B})} $ stands for the $ L^p ({\mathbb R}) $ norm of $ t \mapsto ||v(t) ||_{\mathcal B} $.

\begin{corollary}[$L^2$ space-time estimates] 
\label{corollary_theorem_0}   Under the conditions of Theorem \ref{theorem_0}, the solutions to \eqref{equation_1} satisfy the following estimates.

\smallskip

\noindent {\rm (1)} If $ n \geq 3 $, $ w\in M^{n,2\sigma}(\R^n) $ and $w^{-1}\in L^2_{\mathrm{loc}}(\R^n)$, then there exists $ C > 0 $ such that
$$  || w u ||_{L^2 ({\mathbb R}; L^2(\R^n) )} \leq C || \psi ||_{L^2} + C || w^{-1} F ||_{L^2 ({\mathbb R} ; L^2(\R^n))} $$
for all $ \psi \in L^2(\R^n) $ and $ F \in L^1_{\rm loc} ({\mathbb R};L^2(\R^n)) $ such that $ w^{-1} F \in L^2 ( {\mathbb R} ; L^2(\R^n) ) $.

\smallskip

\noindent {\rm (2)} If $ n \geq 2 $, there exists $ C > 0 $ such that
$$  || V^{\frac{1}{2}} u ||_{L^2 ({\mathbb R}; L^2(\R^2) )} \leq C || \psi ||_{L^2} + C || V^{-\frac{1}{2}} F ||_{L^2 ({\mathbb R} ; L^2(\R^2))} $$
for all  $ \psi \in L^2(\R^2) $ and $ F \in L^1_{\rm loc} ({\mathbb R};L^2(\R^2)) $ such that $ V^{-\frac{1}{2}} F \in L^2 ( {\mathbb R} ; L^2(\R^2) ) $.

\end{corollary}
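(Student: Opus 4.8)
The plan is to deduce both statements from Theorem~\ref{theorem_0} via Kato's theory of $H$-smooth perturbations (\cite{Kato}, see also \cite{RoSc}). Recall that a closed, densely defined operator $A$ on $L^2(\R^n)$ is $H$-smooth provided
$$ \kappa(A)^2 := \sup_{z \in \C \setminus \R} \big\| A (H-z)^{-1} A^* \big\|_{L^2 \to L^2} < \infty , $$
and that for such $A$ one has $e^{-itH}\psi \in D(A)$ for a.e.\ $t$ together with the homogeneous smoothing bound $\| A e^{-itH}\psi \|_{L^2(\R;L^2)} \lesssim \kappa(A)\,\| \psi \|_{L^2}$; moreover, if $A_1$ and $A_2$ are both $H$-smooth, then the retarded Duhamel operator satisfies $\| A_1 \Gamma_H (A_2^* G) \|_{L^2(\R;L^2)} \lesssim \kappa(A_1)\kappa(A_2)\, \| G \|_{L^2(\R;L^2)}$ for $G \in L^2(\R;L^2)$. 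Given the Duhamel formula \eqref{equation_2}, which splits $u = e^{-itH}\psi - i\,\Gamma_H F$, the whole proof then reduces to checking that the relevant multiplication operators are $H$-smooth, which is exactly what Theorem~\ref{theorem_0} provides.

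\emph{Part (1).} Since replacing $w$ by $|w|$ changes neither $\|w u\|_{L^2(\R;L^2)}$ nor $\|w^{-1}F\|_{L^2(\R;L^2)}$ nor the solution $u$, and since $|w|^{-1} = |w^{-1}| \in L^2_{\mathrm{loc}}$ as well, we may assume $w \ge 0$. Let $A$ be the (self-adjoint) operator of multiplication by $w$, which is densely defined because $w \in M^{n,2\sigma}$ is a.e.\ finite. Theorem~\ref{theorem_0}(1) applied with $w_1 = w_2 = w$ gives $\kappa(A) \lesssim \|w\|_{M^{n,2\sigma}} < \infty$, so $A$ is $H$-smooth; hence $\| w e^{-itH}\psi \|_{L^2(\R;L^2)} \lesssim \| \psi \|_{L^2}$. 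For the inhomogeneous term, since $w^{-1}\in L^2_{\mathrm{loc}}$ we have $w \ne 0$ a.e.\ and may write $F = w\,G$ with $G := w^{-1} F \in L^2(\R;L^2)$, so that $w\,\Gamma_H F = A\,\Gamma_H (A^* G)$; the two-operator bound above (with $A_1 = A_2 = A$) yields $\| w\,\Gamma_H F \|_{L^2(\R;L^2)} \lesssim \| w^{-1} F \|_{L^2(\R;L^2)}$. Adding the two contributions proves the claim, with $C$ depending on the fixed weight $w$.

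\emph{Part (2).} The argument is identical with $w$ replaced by $V^{1/2}$. By Assumption~\ref{assumption_2} one has $V > 0$ a.e.\ and $V^{-1} \in L^1_{\mathrm{loc}}(\R^2)$, hence $V^{-1/2} \in L^2_{\mathrm{loc}}(\R^2)$ and $V^{1/2}$ is a.e.\ finite and non-zero. Let $A$ be multiplication by $V^{1/2}$; Theorem~\ref{theorem_0}(2) gives $\kappa(A) < \infty$, so $A$ is $H$-smooth, and writing $F = V^{1/2}\,(V^{-1/2}F)$ and arguing as in Part (1) produces the stated estimate.

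\emph{On the delicate points.} Two issues require care, though neither is a real obstacle. First, the resolvent bounds in Theorem~\ref{theorem_0} are a priori only stated on the dense subspaces $C_0^\infty(\R^n)$, resp.\ $C_0^\infty(\R^2\setminus 0)$, and the multiplications by $w$ (resp.\ $V^{1/2}$) need not be bounded on $L^2$; one must therefore read $A(H-z)^{-1}A^*$ as the bounded closure of the weighted resolvent, exactly as explained in the paragraph following Theorem~\ref{theorem_0}, before invoking Kato's theorem. Second, the estimate needed is the \emph{causal} one, involving $\Gamma_H F = \int_0^t e^{-i(t-s)H}F(s)\,ds$ rather than the full-line average; for $L^2$-in-time norms this refinement is part of the standard $H$-smoothness package — it follows by combining the homogeneous smoothing bound with a $TT^*$/duality argument as in \cite{Kato,RoSc} — so it may be quoted directly. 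What remains is the routine verification that the multiplication operators involved are closed with the expected adjoints.
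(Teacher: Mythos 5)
Your overall strategy coincides with the paper's (the proof there is one line: "It follows from Theorem \ref{theorem_0} with \eqref{H-smooth_2} and \eqref{H_supersmooth_2}"), and the homogeneous part of your argument is correct. But there is a genuine flaw in how you justify the inhomogeneous term. You assert that if $A_1,A_2$ are merely $H$-\emph{smooth}, then the \emph{retarded} Duhamel operator satisfies $\|A_1\Gamma_H(A_2^*G)\|_{L^2(\R;L^2)}\lesssim \kappa(A_1)\kappa(A_2)\|G\|_{L^2(\R;L^2)}$, and later that the passage from the full-line average to the causal integral "follows by a $TT^*$/duality argument." This is false in general: the $TT^*$ argument only yields the untruncated estimate, and the truncation to $\int_0^t$ cannot be obtained from it at the diagonal exponent pair, precisely because the Christ--Kiselev lemma fails for $p=q=2$ (see the remark after Lemma \ref{lemma_CK} in Appendix \ref{Christ_Kiselev}). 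The causal $L^2_t\to L^2_t$ bound requires the strictly stronger property of $H$-\emph{supersmoothness}, i.e.\ the uniform bound on $\|AR_H(z)A^*\|$ itself rather than on its imaginary part; this is the content of \eqref{H-supersmooth_1}--\eqref{H_supersmooth_2} and of \cite[Theorem 2.4]{Dan} (see also Corollary \ref{corollary_supersmooth}), and it is exactly why the paper distinguishes the two notions.

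The gap is a mis-attribution rather than a missing ingredient: Theorem \ref{theorem_0} gives the uniform bound on $w(H-z)^{-1}w$ (resp.\ $V^{1/2}(H-z)^{-1}V^{1/2}$) for all $z\in\C\setminus[0,\infty)\supset\C\setminus\R$, which is precisely the supersmoothness of the multiplication operator, not just its smoothness. If you replace your "two-operator smoothness lemma" by the implication "supersmoothness $\Rightarrow$ \eqref{H_supersmooth_2}" from \cite{Dan}, the rest of your argument (reduction to $w\ge 0$, writing $F=w\,(w^{-1}F)$, interpreting the weighted resolvent as its bounded closure) goes through and reproduces the paper's proof. As written, however, the step producing $\|w\,\Gamma_H F\|_{L^2(\R;L^2)}\lesssim\|w^{-1}F\|_{L^2(\R;L^2)}$ is not justified.
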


Under the same assumptions on $V$, we next consider estimates in Lebesgue or Lorentz spaces.

\begin{theorem}[Uniform Sobolev estimates] \label{theoremevariante} Let $ n \geq 3 $ and $ \frac{n-1}{2} < \sigma \leq \frac{n}{2} $. If $ V \in {\mathcal X}_n^{\sigma} \cap  L^{\frac{n}{2},\infty}  $ satisfies Assumption \ref{assumption_1}, then there exists $ C > 0 $ such that
\begin{align}
\label{theoremevariante_1}
 \big| \big| (H-z)^{-1} f \big| \big|_{L^{\frac{2n}{n-2},2}(\R^n)} \leq C ||f||_{L^{\frac{2n}{n+2},2}(\R^n)},\quad
 z \in \C \setminus [0,\infty),\  
 f \in L^2(\R^n) \cap  L^{\frac{2n}{n+2},2}(\R^n).
\end{align}
\end{theorem}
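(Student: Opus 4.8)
The plan is to deduce Theorem \ref{theoremevariante} from the uniform weighted $L^2$ estimate \eqref{theorem_0_1} by combining it with the free uniform Sobolev inequality of Kenig--Ruiz--Sogge (in its Lorentz-space refined form) and a perturbation argument based on the resolvent identity. I would first record the free estimate: for $n\ge3$ and $z\in\C\setminus[0,\infty)$,
\[
\big|\big|(-\Delta-z)^{-1}g\big|\big|_{L^{\frac{2n}{n-2},2}}\le C\,||g||_{L^{\frac{2n}{n+2},2}},
\]
uniformly in $z$, together with the mapping property that multiplication by $|V|^{1/2}$ (resp.\ $w=|V|^{1/2}$) is bounded $L^{\frac{2n}{n-2},2}\to L^2$ and $L^2\to L^{\frac{2n}{n+2},2}$ when $V\in L^{\frac n2,\infty}$; this is just Hölder in Lorentz spaces, since $\frac{1}{2}=\frac{n-2}{2n}+\frac1n$ and $|V|^{1/2}\in L^{n,\infty}$. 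Write $R_0(z)=(-\Delta-z)^{-1}$ and $R(z)=(H-z)^{-1}$, and set $w=|V|^{1/2}$, so that formally $V=vw$ with $v=\mathrm{sgn}(V)\,|V|^{1/2}$, $|v|=w$.

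The core step is the resolvent identity in the symmetrized form
\[
R(z)=R_0(z)-R_0(z)\,v\,\big(1+w R_0(z) v\big)^{-1}\,w\,R_0(z),
\]
whose validity (and the invertibility of $1+wR_0(z)v$ on $L^2$, uniformly in $z$) I would justify as follows. By Theorem \ref{theorem_0}(1) applied with $w_1=w_2=w\in M^{n,2\sigma}$ — here using crucially that $|V|^{1/2}\in M^{n,2\sigma}$, which is exactly the point emphasized in Remark \ref{remark_theorem_0} — the operator $wR(z)v$ is bounded on $L^2$ uniformly in $z$; the standard computation then gives $(1+wR_0(z)v)^{-1}=1-wR(z)v$ as bounded operators on $L^2$, with norm bounded uniformly in $z$. (One should first establish the identity on a dense set, e.g.\ for $f\in C_0^\infty$, where all objects are genuine $L^2$ functions by the local integrability remarks following Theorem \ref{theorem_0}, and then extend by density.) Now estimate the two terms of the resolvent identity applied to $f\in L^2\cap L^{\frac{2n}{n+2},2}$: the free term $R_0(z)f$ is controlled by the Kenig--Ruiz--Sogge estimate; for the perturbative term,
\[
\big|\big|R_0(z)\,v\,(1+wR_0(z)v)^{-1}\,w\,R_0(z)f\big|\big|_{L^{\frac{2n}{n-2},2}}
\le \big|\big|R_0(z)v\big|\big|_{L^2\to L^{\frac{2n}{n-2},2}}\;
\big|\big|(1+wR_0(z)v)^{-1}\big|\big|_{L^2\to L^2}\;
\big|\big|wR_0(z)f\big|\big|_{L^2},
\]
where by duality $||R_0(z)v||_{L^2\to L^{\frac{2n}{n-2},2}}=||wR_0(z)||_{L^{\frac{2n}{n+2},2}\to L^2}$, and both of the outer factors are the composition of the free uniform Sobolev estimate with Hölder against $|V|^{1/2}\in L^{n,\infty}$, hence uniformly bounded; the middle factor is uniformly bounded by the previous paragraph. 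This yields \eqref{theoremevariante_1}.

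The main obstacle, as usual in these perturbative arguments, is making rigorous the algebraic manipulations with unbounded multipliers and the invertibility of $1+wR_0(z)v$ on $L^2$ uniformly down to the spectrum and out to $z\to\infty$ along $[0,\infty)$; the delicate point is that the relevant operators are a priori only densely defined, so one must work on $C_0^\infty$ (or $C_0^\infty(\R^n\setminus 0)$), verify that $wR_0(z)v$ extends to a bounded operator with the stated uniform bound — which itself rests on the free uniform Sobolev estimate and the Lorentz-Hölder inequality rather than on Theorem \ref{theorem_0} — and only then transfer everything to $L^2$ by density and take limits $\mathrm{Im}\,z\to 0$. A secondary technical point is that one wants the full Lorentz-space endpoint, not merely the $L^{\frac{2n}{n+2}}\to L^{\frac{2n}{n-2}}$ Lebesgue estimate, so one must invoke the sharp Lorentz refinement of the Kenig--Ruiz--Sogge inequality (available in the literature) and check that all intermediate Hölder estimates respect the second (Lorentz) exponent $2$; once that bookkeeping is in place the argument closes cleanly.
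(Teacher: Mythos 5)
Your proposal is correct and follows essentially the same route as the paper: the paper also writes $V=Y^*Z$ with $Y,Z\sim|V|^{1/2}$, combines the Lorentz-space Kenig--Ruiz--Sogge estimate (Proposition \ref{lemma_resolvent_1}) and H\"older with $|V|^{1/2}\in L^{n,\infty}$ with the uniform weighted bound on $|V|^{1/2}R(z)|V|^{1/2}$ from Theorem \ref{theorem_0} (1), and feeds these into the iterated resolvent identity, packaged abstractly as Proposition \ref{proposition_abstract_1} in weak (sesquilinear) form precisely to handle the unbounded multipliers you flag. Your Birman--Schwinger formulation with $(1+wR_0(z)v)^{-1}=1-wR(z)v$ is algebraically identical to that iterated identity $R=R_0-R_0VR_0+R_0VRVR_0$, so the two proofs differ only in presentation.
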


This theorem means essentially that the resolvent $ (H-z)^{-1} $ is uniformly bounded (in $z$) between $ L^{\frac{2n}{n+2},2} $ and $ L^{\frac{2n}{n-2},2} $ but, similarly to Theorem \ref{theorem_0}, we state it as above to make a clear distinction between the resolvent $ (H-z)^{-1} $ (defined on $ L^2 $) and its closure to $ L^{\frac{2n}{n-2},2} $. A similar remark also holds for Theorem \ref{theorem_1} below. The additional condition $ V \in L^{\frac{n}{2},\infty}$ is due to the use of the fact that the multiplication by $|V|^{1/2}$ is bounded from $L^{\frac{2n}{n-2},2}$ to $L^2$
, which allows us to deduce \eqref{theoremevariante_1} from weighted estimates in Theorem \ref{theorem_0} (1) and a perturbation method in Section \ref{sectionabstraite}.  Note that the norm in $ L^{\frac{n}{2},\infty} $ is also invariant by the scaling (\ref{scalingofpotentials}). 

\begin{remark}
Theorem \ref{theoremevariante} extends a part of the result by Kenig-Ruiz-Sogge \cite{KRS} for constant coefficient operators to Schr\"odinger operators with potentials. Extending such uniform Sobolev estimates to variable coefficients operators is a topic of current interest. Recently Guillarmou-Hassell \cite{GuHa} extended such estimates to the Laplace operator on non trapping asymptotically conic manifolds, and Hassell-Zhang \cite{HaZh} extended it to potential perturbations with smooth potentials decaying at infinity like $ \langle x \rangle^{-3} $ and without $0$ resonance nor eigenvalue. Here we provide a similar result on $ {\mathbb R}^n$  for potentials with critical singularity and weaker decay at infinity.

\end{remark}


To state our results on Strichartz inequalities, we recall the following classical definition.
\begin{definition}						
\label{admissible}
A pair $(p,q)$ is said to be an ($n$-dimensional) admissible pair if 
$$
2\le p,q\le \infty,\quad\frac2p=n\Big(\frac12-\frac1q\Big),\quad (n,p,q)\neq(2,2,\infty). 
$$
\end{definition}

\begin{theorem}[Global Strichartz estimates]	$ $
\label{theorem_1} 

\smallskip

\noindent {\rm (1)} Let $n\ge3$, $ \frac{n-1}{2} < \sigma \leq \frac{n}{2} $ and $V \in {\mathcal X}_n^{\sigma}$ satisfy Assumption \ref{assumption_1}. Then, for any admissible pairs $(p,q)$ and $(\tilde p,\tilde q)$ with $p,\tilde p>2$, there exists $ C > 0 $ such that  the solution $u$ to \eqref{equation_1} satisfies
\begin{align}
\label{theorem_1_1}
||u||_{L^{p} ( {\mathbb R} ; L^{q,2}(\R^n) )}
\le C ||\psi||_{L^2(\R^n)}+C ||F||_{L^{\tilde{p}^{\prime}} ( {\mathbb R} ; L^{\tilde{q}^{\prime},2}(\R^n) )},
\end{align}
for all $ \psi \in L^2(\R^n) $ and $ F \in L^1_{\rm loc} ({\mathbb R};L^2(\R^n)) \cap L^{\tilde{p}^{\prime}} ( {\mathbb R} ; L^{\tilde{q}^{\prime},2}(\R^n) )  $. Furthermore, if in addition $V\in L^{\frac n2,\infty}$, then \eqref{theorem_1_1} holds for all admissible pairs including the endpoint cases. \\
\noindent {\rm (2)} If $n=2$ and $ V \in {\mathcal X}_2 \cap L^1  $ satisfies Assumption \ref{assumption_2} then, for any admissible pairs $(p,q)$ and $(\tilde{p},\tilde{q})$, there exists $ C >0 $ such that 
\begin{align*}
||u||_{ L^{p} ( {\mathbb R}; L^{q,2}(\R^2))}
\le C||\psi||_{L^2(\R^2)}+C ||F||_{L^{\tilde{p}^{\prime}} ( {\mathbb R} ; L^{\tilde{q}^{\prime},2}(\R^2)) }
\end{align*}
for any $\psi \in L^2(\R^2)$ and $F \in L^1_{\rm loc} ({\mathbb R};L^2(\R^2) ) \cap  L^{\tilde{p}^{\prime}} ( {\mathbb R} ; L^{\tilde{q}^{\prime}}(\R^2) )$. 

\end{theorem}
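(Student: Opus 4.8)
The plan is to derive the Strichartz estimates from the resolvent estimates via an abstract smooth-perturbation argument, of the kind developed in Section \ref{sectionabstraite} (the ``general criterion'' advertised in the introduction). I would take as the unperturbed operator $H_0=-\Delta$ and use that $e^{-itH_0}$ satisfies the full set of Strichartz estimates, including the endpoint $(p,q)=(2,\tfrac{2n}{n-2})$ (Keel--Tao), and moreover satisfies them with the Lorentz refinement $L^{q,2}$ in space — this refinement is needed because multiplication by $|V|^{1/2}$ is bounded $L^{\frac{2n}{n-2},2}\to L^2$ (using $V\in L^{\frac n2,\infty}$) but \emph{not} $L^{\frac{2n}{n-2}}\to L^2$. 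The key structural input is that we can factor the perturbation as $V=V_1V_2$ with $V_1=|V|^{1/2}\mathrm{sgn}(V)$, $V_2=|V|^{1/2}$, both lying in $M^{n,2\sigma}$ (and, under the extra hypothesis, such that multiplication by $|V|^{1/2}$ maps $L^{\frac{2n}{n-2},2}\to L^2$), so that Theorem \ref{theorem_0} (1) gives the uniform bound $\|V_1(H-z)^{-1}V_2\|_{L^2\to L^2}\le C$, equivalently $\||V|^{1/2}(H-z)^{-1}|V|^{1/2}\|_{L^2\to L^2}\le C$ uniformly in $z\in\C\setminus[0,\infty)$.

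The main steps, in order, are: (i) From the uniform resolvent bound and Kato's theory of smooth perturbations (in the form relating boundary values of the resolvent to the local smoothing / $L^2$ space-time bound), deduce that $|V|^{1/2}$ is $H$-smooth and also $H_0$-smooth, i.e.\ $\||V|^{1/2}e^{-itH}\psi\|_{L^2(\R;L^2)}\le C\|\psi\|_{L^2}$ and the analogous dual and retarded-Duhamel estimates; this is exactly Corollary \ref{corollary_theorem_0}. (ii) Write the Duhamel/resolvent identity $e^{-itH}=e^{-itH_0}-i\Gamma_{H_0}(Ve^{-i\cdot H})$ and, more usefully, the operator identity $e^{-itH}\psi=e^{-itH_0}\psi -i\int_0^t e^{-i(t-s)H_0}V_1\,(V_2 e^{-isH}\psi)\,ds$. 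Estimate the first term by the free Strichartz estimate (with $L^{q,2}$ spatial norm), and the second by composing: the free inhomogeneous Strichartz estimate $\|\Gamma_{H_0}G\|_{L^p_tL^{q,2}_x}\lesssim \|G\|_{L^2_tL^{2n/(n+2),2}_x}$, boundedness of multiplication by $V_1$ from $L^2_x\to L^{2n/(n+2),2}_x$ (again using $V\in L^{n/2,\infty}$ together with Hölder in Lorentz spaces), and the $H$-smoothing bound on $V_2 e^{-isH}\psi$ from step (i). (iii) Treat the inhomogeneous term $\Gamma_H F$ by the same scheme, using the Christ--Kiselev lemma to pass from the ``non-retarded'' composition to the retarded Duhamel operator $\Gamma_H$ whenever $p,\tilde p>2$ (this is why the non-endpoint statement needs no $L^{n/2,\infty}$ hypothesis), and using the abstract argument of Section \ref{sectionabstraite} to also capture the doubly-endpoint inhomogeneous estimate $L^{2}_tL^{2n/(n+2),2}_x\to L^2_tL^{2n/(n-2),2}_x$ directly from the uniform Sobolev inequality of Theorem \ref{theoremevariante}. (iv) For $n=2$, the free Strichartz estimates hold for all admissible pairs with $(p,q)\ne(2,\infty)$, $V>0$ so $V_1=V_2=V^{1/2}$, Theorem \ref{theorem_0} (2) gives $\|V^{1/2}(H-z)^{-1}V^{1/2}\|_{L^2\to L^2}\le C$, and $V\in L^1(\R^2)$ makes $V^{1/2}\in L^2(\R^2)$, so multiplication by $V^{1/2}$ is bounded $L^\infty_x\to L^2_x$ and (by duality/interpolation with $L^2$) between the appropriate Lorentz spaces; then repeat steps (i)–(iii). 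One must also check that the free flow's endpoint-type estimates survive on $\R^2\setminus 0$ and that the operator $H$ coincides with its restriction to $C_0^\infty(\R^2\setminus 0)$ in the relevant sense — harmless because $V^{-1}\in L^1_{\mathrm{loc}}$ controls the behaviour near the origin.

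The hard part will be step (iii): recovering the \emph{inhomogeneous endpoint} Strichartz estimate, since neither Christ--Kiselev nor the naive composition of homogeneous endpoint estimates produces the double-endpoint inhomogeneous bound (this is the well-known failure exploited in Foschi's and Keel--Tao's analyses). The resolution — and the novelty claimed in the introduction — is to prove the double-endpoint inhomogeneous bound for $H$ \emph{directly} from the uniform Sobolev inequality $\|(H-z)^{-1}\|_{L^{2n/(n+2),2}\to L^{2n/(n-2),2}}\le C$ of Theorem \ref{theoremevariante}, via a $TT^*$-type / Kato--Yajima argument: one represents $\Gamma_H$ through the spectral measure and the boundary values $(H-(\lambda\pm i0))^{-1}$, whose uniform $L^{2n/(n+2),2}\to L^{2n/(n-2),2}$ bound, combined with an almost-orthogonality (Plancherel in $\lambda$) in the time variable, yields the inhomogeneous endpoint estimate. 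The remaining obstacles — verifying that all the multiplier-by-$|V|^{1/2}$ mappings between Lorentz spaces hold under exactly the stated hypotheses, and that the iterated resolvent/Duhamel identities converge — are routine given the Morrey--Campanato and Lorentz bookkeeping of Section \ref{sectionnotationLorentz} and the uniform bounds already in hand.
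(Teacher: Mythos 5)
Your overall architecture --- factoring $V=Y^*Z$ with $Y,Z=\pm|V|^{1/2}$, free Strichartz estimates in Lorentz spaces, $H$-(super)smoothness of $|V|^{1/2}$ deduced from the uniform weighted resolvent bound of Theorem \ref{theorem_0}, Christ--Kiselev for the non-endpoint inhomogeneous estimates, and $V\in L^1(\R^2)\Rightarrow V^{1/2}\in L^2(\R^2)$ for the two-dimensional smoothing input --- coincides with the paper's. The genuine gap is in your step (iii), the double-endpoint inhomogeneous estimate. You propose to obtain $\|\Gamma_H F\|_{L^2_tL^{2^*,2}}\lesssim\|F\|_{L^2_tL^{2_*,2}}$ directly from the uniform Sobolev inequality of Theorem \ref{theoremevariante} by representing $\Gamma_H$ through boundary values $(H-(\lambda\pm i0))^{-1}$ and invoking ``Plancherel in $\lambda$''. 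That mechanism (Kato--Yajima, D'Ancona) is intrinsically Hilbert-space-valued: it converts $\sup_\lambda\|R(\lambda\pm i0)\|_{\mathcal A\to\mathcal A^*}$ into the \emph{retarded} space-time bound only when the target $L^2_t\mathcal A^*$ is a Hilbert space, because it relies on the Fourier transform in $t$ being an isometry there. For $\mathcal A^*=L^{2^*,2}$ this is unavailable, and the retarded cut-off $\mathds 1_{s<t}$ cannot be restored by Christ--Kiselev either, since $p=\tilde p=2$. So as written, step (iii) does not close.

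The paper circumvents exactly this point by iterating the (weak) Duhamel formula, which is the content of Theorem \ref{theorem_inhomogeneous_1}: $\langle\Gamma_HF,G\rangle_T$ is expanded so that every factor carrying a Lorentz norm is a \emph{free} operator ($\Gamma_{H_0}$, $\Gamma_{H_0}^*$ and the smoothing operators $\overline{Y\Gamma_{H_0}}$, $\overline{Z\Gamma_{H_0}}$, $\overline{Y\Gamma_{H_0}^*}$), all controlled by Keel--Tao and \eqref{proof_theoremevariante}, while the only retarded operator built from the \emph{perturbed} evolution is $\overline{Z\Gamma_H^*}Z^*$, an $L^2_tL^2\to L^2_tL^2$ bound which does follow from the supersmoothness of $|V|^{1/2}$, i.e.\ from Theorem \ref{theorem_0} (1) with $w_1=w_2=|V|^{1/2}$ together with \eqref{H_supersmooth_2} and Corollary \ref{corollary_supersmooth}. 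In particular, Theorem \ref{theoremevariante} is not used in the Strichartz proof at all; the input is the weighted $L^2$ resolvent bound. Replacing your step (iii) by this iterated-Duhamel scheme makes the rest of your outline go through.
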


Technically, the additional condition $V\in L^1$ in the two dimensional case  is due to the fact that $\langle x \rangle^{-1}$ is not $-\Delta$-smooth, see after Proposition \ref{lemma_resolvent_3}. Note also that we take $ F \in L^1_{\rm loc} ({\mathbb R};L^2) $ to make sure that $ \Gamma_H F $ has a clear sense; of course, the above Strichartz estimates show that $ \Gamma_H $ has a bounded closure as an operator between $ L^{\tilde{p}^{\prime}} ({\mathbb R};L^{\tilde{q}^{\prime},2}) $ and $ L^{p} ({\mathbb R};L^{q,2})  $ if $n=3$, or  $ L^{\tilde{p}^{\prime}} ({\mathbb R};L^{\tilde{q}^{\prime}}) $ and $ L^{p} ({\mathbb R};L^{q})  $ if $n=2$. 
Using the continuous embeddings $ L^{q,2} \subset L^q $ and $ L^{\tilde{q}^{\prime},2} \subset L^{\tilde{q}^{\prime}} $ (see Paragraph \ref{sectionnotationLorentz}), we see that (\ref{theorem_1_1}) allows to recover the usual Strichartz estimates
 \begin{align}
 \nonumber
||u||_{L^{p} ( {\mathbb R} ; L^{q}(\R^n) )}
\le C ||\psi||_{L^2}+C ||F||_{L^{\tilde{p}^{\prime}} ( {\mathbb R} ; L^{\tilde{q}^{\prime}}(\R^n) )},
\end{align}
for $\psi\in L^2(\R^n)$ and $F\in L^1_{\rm loc} ({\mathbb R};L^2(\R^n)) \cap L^{\tilde{p}^{\prime}} ( {\mathbb R} ; L^{\tilde{q}^{\prime}}(\R^n) )  $. 

When $n\ge3$, we can also add a small scaling critical potential.

\begin{corollary}\label{corollary_2}
Let $n\ge3$, $ \frac{n-1}{2} < \sigma \leq \frac{n}{2} $ and $V_1 \in {\mathcal X}_n^{\sigma}$ satisfy Assumption \ref{assumption_1}. Let $V_2$ be real-valued such that $||V_2||_{M^{\frac n2,\sigma}}$ is sufficiently small. Then the solution $u$  to \eqref{equation_1} with $H=-\Delta+V_1+V_2$ satisfies \eqref{theorem_1_1} for all admissible pairs $(p,q)$ and $(\tilde p,\tilde q)$ with $p,\tilde p>2$. Moreover, if in addition $||V_2||_{L^{\frac n2,\infty}}$ is small enough and $V_1\in L^{\frac n2,\infty}$ then  \eqref{theorem_1_1} holds for  all admissible pairs including the endpoint cases. 
\end{corollary}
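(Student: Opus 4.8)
The plan is to treat $V_2$ as a small perturbation of $H_1:=-\Delta+V_1$ and to reduce the statement to the case already covered by Theorem \ref{theorem_1}(1), the only new ingredient being that the resolvent estimates of Theorems \ref{theorem_0}(1) and \ref{theoremevariante}, available there because $V\in\mathcal X_n^\sigma$, must now be produced for $H=-\Delta+V_1+V_2$ even though $V_1+V_2\notin\mathcal X_n^\sigma$ in general. First I would fix the functional setting: since $V_1$ satisfies Assumption \ref{assumption_1}, the standard Morrey--Campanato form bound $|\langle V_2 f,f\rangle|\le C\|V_2\|_{M^{n/2,\sigma}}\|\nabla f\|_{L^2}^2$ (valid since $\sigma>(n-1)/2\ge 1$, and already used to realize $H_1$) shows that, when $\|V_2\|_{M^{n/2,\sigma}}$ is small, $V_2$ is $H_1$-form bounded with relative bound $<1$. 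Hence $H$ is well defined as the KLMN form sum, is self-adjoint, and satisfies $\langle Hf,f\rangle\ge\tfrac{\delta_0}{2}\|\nabla f\|_{L^2}^2>0$, so $\sigma(H)\subset[0,\infty)$ and $H$ has no negative eigenvalue; absence of a zero resonance and of embedded eigenvalues will follow a posteriori from the uniform resolvent bounds obtained below.

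The core step is to transfer the resolvent estimates from $H_1$ to $H$. Write $V_2=\bar w\,w$ with $w=|V_2|^{1/2}$ and $\bar w=\mathrm{sgn}(V_2)\,|V_2|^{1/2}$, so that $w,\bar w\in M^{n,2\sigma}$ with $\|w\|_{M^{n,2\sigma}}^2=\|\bar w\|_{M^{n,2\sigma}}^2=\|V_2\|_{M^{n/2,\sigma}}$. By Theorem \ref{theorem_0}(1) for $H_1$, $\|w(H_1-z)^{-1}\bar w\|_{L^2\to L^2}\le C\|V_2\|_{M^{n/2,\sigma}}$ uniformly in $z\in\C\setminus[0,\infty)$, which is $<1/2$ once $V_2$ is small; the second resolvent identity then rearranges to
\[
(H-z)^{-1}=(H_1-z)^{-1}-(H_1-z)^{-1}\bar w\,\bigl(1+w(H_1-z)^{-1}\bar w\bigr)^{-1}w(H_1-z)^{-1},
\]
first for $\mathrm{Im}\,z\neq0$, where all operators are bounded, then up to the spectrum by the uniform bounds. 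Estimating each factor with Theorem \ref{theorem_0}(1) for $H_1$ yields $\|w_1(H-z)^{-1}w_2\|_{L^2\to L^2}\le C\|w_1\|_{M^{n,2\sigma}}\|w_2\|_{M^{n,2\sigma}}$ for all $w_1,w_2\in M^{n,2\sigma}$, uniformly in $z$, i.e. Theorem \ref{theorem_0}(1) holds verbatim for $H$. If in addition $V_1\in L^{n/2,\infty}$ and $\|V_2\|_{L^{n/2,\infty}}$ is small, the same identity estimated on Lorentz spaces --- using Theorem \ref{theoremevariante} for $H_1$ together with $w\colon L^{\frac{2n}{n-2},2}\to L^2$ and $\bar w\colon L^2\to L^{\frac{2n}{n+2},2}$ bounded with norms $\lesssim\|V_2\|_{L^{n/2,\infty}}^{1/2}$ (since $|V_2|^{1/2}\in L^{n,\infty}$) --- gives the uniform Sobolev bound $\|(H-z)^{-1}f\|_{L^{\frac{2n}{n-2},2}}\le C\|f\|_{L^{\frac{2n}{n+2},2}}$, i.e. Theorem \ref{theoremevariante} holds for $H$.

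Finally I would re-run the proof of Theorem \ref{theorem_1}(1) with $V$ replaced by $V_1+V_2$: that argument uses only the Strichartz estimates and smoothing properties of $e^{-it\Delta}$, and, for the perturbed operator, the weighted $L^2$ resolvent estimates of Theorem \ref{theorem_0}(1) (which, via Duhamel, Kato smoothing and the Christ--Kiselev lemma, yield the non-endpoint pairs --- this is exactly why $p,\tilde p>2$ is imposed) together with, for the endpoint pairs, the uniform Sobolev estimate of Theorem \ref{theoremevariante} --- all of which we have just established for $H$, the endpoint case requiring precisely the extra hypotheses $V_1\in L^{n/2,\infty}$ and $\|V_2\|_{L^{n/2,\infty}}$ small. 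Equivalently, one may invoke the abstract perturbation scheme of Section \ref{sectionabstraite} with unperturbed operator $H_1$, whose full (endpoint included when $V_1\in L^{n/2,\infty}$) Strichartz estimates are Theorem \ref{theorem_1}(1), and perturbation $V_2$. I expect the main obstacle to be the second step: justifying the resolvent-identity manipulations rigorously for $z$ up to $[0,\infty)$ --- the weighted resolvents are closures of unbounded compositions --- and checking that the small factor, a power of $\|V_2\|_{M^{n/2,\sigma}}$ (resp.\ $\|V_2\|_{L^{n/2,\infty}}$), genuinely dominates the fixed constants coming from Theorems \ref{theorem_0} and \ref{theoremevariante}, uniformly in $z$; once these resolvent estimates for $H$ are in hand, the passage to Strichartz inequalities is a black-box application of the machinery of Section \ref{sectionabstraite}.
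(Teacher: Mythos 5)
Your proposal is correct, and its core mechanism — perturbing off $H_1=-\Delta+V_1$ using the second resolvent identity with the factorization $V_2=\bar w\,w$, $w=|V_2|^{1/2}$, and then feeding the resulting smoothing bound into the abstract machinery of Section \ref{sectionabstraite} — is the same as the paper's. The difference is in how much you extract from the resolvent identity and which operator you then use as the unperturbed one. The paper's proof is more economical: it does \emph{not} establish Theorem \ref{theorem_0}~(1) or Theorem \ref{theoremevariante} for $H$ in full; it only proves the $H$-supersmoothness of $Z=|V_2|^{1/2}$, by a single application of the identity $\langle R_H(z)Zf,Zg\rangle=\langle R_{H_1}(z)Zf,Zg\rangle-\langle ZR_H(z)Zf,YR_{H_1}(\bar z)Zg\rangle$ followed by absorption of $\|ZR_H(z)Zf\|$ into the left-hand side (the one-step version of your Neumann series), and then applies Theorems \ref{theorem_inhomogeneous_2} and \ref{theorem_inhomogeneous_1} with unperturbed operator $H_1$, whose Strichartz estimates are supplied by Theorem \ref{theorem_1}~(1) — this is exactly the ``equivalently'' branch you mention at the end. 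Your primary route instead upgrades the full weighted resolvent estimate and the uniform Sobolev inequality to $H$ first, and then re-runs the proof of Theorem \ref{theorem_1}~(1) with $-\Delta$ as the base operator; this costs a little more (you must check that $|V_1+V_2|^{1/2}$ lands in a Morrey class $M^{n,2\sigma'}$ with $\sigma'>\frac{n-1}{2}$, which it does via $\eqref{VMorrey}$, and you must justify the symmetrized resolvent formula for the closures of the weighted resolvents up to $[0,\infty)$, which you correctly flag), but it buys the stronger intermediate statements that the paper only records in a remark, namely that Theorems \ref{theorem_0} and \ref{theoremevariante} persist under the small perturbation $V_2$. Both routes are sound; the rigor issues you identify are handled in the paper precisely by working at the level of sesquilinear forms and using the relative $H$-boundedness of $Z$ to make the absorption argument legitimate for each fixed $z\notin\R$ before passing to the uniform bound.
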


\begin{remark}
When $n\ge3$, Theorem \ref{theorem_1} (1) and Corollary \ref{corollary_2} cover all admissible cases including the inhomogeneous endpoint case $(p,q,\tilde p,\tilde q)=(2,\frac{2n}{n-2},2\frac{2n}{n-2})$, while previous literatures \cite{RoSc,BPST1,BPST2} considered homogeneous estimates only. Here recall that, for non-endpoint admissible pairs, the inhomogeneous estimates follow from the homogeneous estimates and the Christ-Kiselev lemma (see Appendix \ref{Christ_Kiselev}), but this is not the case for the endpoint estimate. Also note that \cite{BaDu,MMT} proved Strichartz estimates for all admissible pairs, but only for bounded potentials so that $V=o(\langle x\rangle^{-2}\langle\log x\rangle^{-2})$ or $C^1$ potentials satisfying $\partial_x^\alpha V=O(\langle x\rangle ^{-2-|\alpha|})$, $|\alpha|\le1$. Similarly to Theorem \ref{theorem_0}, our assumption allows one strong singularity and multiple weak singularities. We also refer to a recent result \cite{KiKo} which studied the non-endpoint estimates for small $V\in M^{\frac n2,\sigma}$ and the homogeneous endpoint estimate for small $V\in L^{\frac n2}$. Compared with this result, the novelty of Corollary \ref{corollary_2} is again the inhomogeneous endpoint estimate for small $V_2\in L^{\frac n2,\infty}$. 

When $n=2$, \cite{BPST1,FFFP}  considered a class of scaling invariant potentials of the form $V(x)=a(\theta)r^{-2}$ with $a(\theta)>0$. Although we impose a slightly stronger condition such as $V\in L^1(\R^2)$, we do not require such a symmetry. Moreover, methods in \cite{BPST1,FFFP} essentially rely on the explicit formula of the kernel of $e^{-itH}$ and it seems to be difficult to extend them to potentials which are not invariant under the scaling $V(x)\mapsto \lambda^2 V(\lambda x)$. 
\end{remark}

\begin{remark}
If we take $\delta_0=0$ in Assumption \ref{assumption_1}, the above results for $n\ge3$ do not hold in general. For instance, endpoint Strichartz estimates can fail in the case of $V(x)=-\frac{(n-2)^2}{4|x|^2}$. We refer to a subsequent work \cite{Miz2} for more details. 
\end{remark}

\begin{remark}
As in Corollary \ref{corollary_2}, Theorem \ref{theorem_0} (resp. \ref{theoremevariante}) still holds if we add a small potential $V_2\in M^{\frac n2,\sigma}$ (resp. $V_2\in L^{\frac n2,\infty}$) to the operator $H$. This observation will be essentially proved in the proof of Corollary \ref{corollary_2} in Section \ref{proof_corollary_2}. 
\end{remark}

\begin{remark}At a formal level, the proof of Theorems \ref{theorem_0}, \ref{theoremevariante_1} and \ref{theorem_1} are very simple and based on the following iterated resolvent and Duhamel identities:\begin{align*}R(z)&=R_0(z)-R_0(z)VR_0(z)+R_0(z)VR(z)VR_0(z),\\\Gamma_{H}&=\Gamma_{-\Delta}-i\Gamma_{-\Delta}V\Gamma_{-\Delta}-\Gamma_{-\Delta}V\Gamma_{H}V\Gamma_{-\Delta},\end{align*}which can be seen, at least formally, by applying usual resolvent or Duhamel identities twice, where $R(z)=(H-z)^{-1}$ and $R_0(z)=(-\Delta-z)^{-1}$. In the case of resolvent estimates for instance, this resolvent formula, together with the decomposition $V=|x|^{-1}\cdot |x|V$, allows us to deduce desired estimates for $R(z)$ from estimates for free resolvents $R_0(z)$, $R_0(z)|x|V$, $|x|^{-1}R_0(z)$ and the estimate for the weighted resolvent $|x|^{-1}R(z)|x|^{-1}$. The estimates for the free resolvents can be proved by using the explicit formula of $R_0(z)$, while the proof of the estimate of $|x|^{-1}R(z)|x|^{-1}$ relies on a multiplier technique by \cite{BVZ}. A rough strategy for  the proof of Strichartz estimates is similar. 

We however stress that, due to a strong singularity of $V$ at the origin, justifying the above formulas is not so obvious. In Section \ref{sectionabstraite}, we develop, in a quite abstract setting, such a perturbative technique with a rigorous justification of the above observation. \end{remark}


In the following last result we consider a different kind of assumption on the potentials. They can be of long range type, but locally we allow them to have a critical singularity which scales as our previous potentials (i.e. typically as $ |x|^{-2} $).

\begin{assumption} \label{assumptionalternative} $ $\\
{\rm(1)} If $ n \geq 3 $, $(x \cdot \nabla)^\ell V\in L^{\frac n2,\infty}_{\rm loc}(\R^n)$ for $\ell=0,1,2$. \\\ \ \ \ \,\,If $n=2$, $V=V_1+V_2$ with  $(x \cdot \nabla)^\ell V_1\in L^1_{\rm loc}(\R^2)$ and $|x|^2(x \cdot \nabla)^\ell V_2\in L^\infty_{\rm loc}(\R^2)$ for $\ell=0,1,2$.\\
{\rm(2)} There exists $ \delta_0 > 0 $ such that for all $ f \in C_0^{\infty} ({\mathbb R}^n \setminus 0) $, 
\begin{align*}
\langle (-\Delta + V) f ,  f \rangle \geq  \delta_0 ||\nabla f||_{L^2}^2 .
\end{align*}
{\rm(3)} There exists $ \delta_0 > 0 $ such that for all $f \in C_0^{\infty} ({\mathbb R}^n \setminus 0)$,
$$ \langle (- \Delta - x \cdot \nabla V ) f , f \rangle \geq \delta_0   \times \begin{cases}  ||\nabla f||_{L^2}^2 & \mbox{if} \ n \geq 3 \\
|| \nabla f ||_{L^2}^2 + || w f ||_{L^2}^2 & \mbox{if} \ n = 2  \end{cases} $$
for some smooth positive function $ w  $ on $ {\mathbb R}^2 \setminus 0 $ such that $ |x|w \in L^{\infty} $.\\
{\rm(4)} There exists $C>0$ such that for any $f\in C_0^{\infty} (\R^n \setminus 0) $
\begin{align*}
| \langle (2V+ x \cdot \nabla V)f,f \rangle |&\le C \langle (-\Delta+V+1)f,f \rangle,\\
| \langle (2 x \cdot \nabla V+(x \cdot \nabla)^2 V)f,f \rangle |&\le C \langle (-2\Delta- x \cdot \nabla V)f,f \rangle. 
\end{align*}
\end{assumption}
In (2), (3) and (4), all brackets are understood in the form sense (see paragraph \ref{realizations}).

\begin{example}
Assumption \ref{assumptionalternative} is satisfied by any $V\in C^2(\R^n\setminus\{0\})$ such that, for some $\mu\in(0,2]$,
\begin{itemize}
\item{if $ n \geq  3 $, there exist constants $ c_0 < \frac{(n-2)^2}{4} $ and $ C > 0 $ such that
\begin{align}
V(x)\ge - c_0 |x|^{-2},\quad - x \cdot \nabla V(x)\ge C^{-1} \langle x \rangle^{-\mu}-2 c_0 |x|^{-2},\quad |(x \cdot \nabla)^2V(x)|\le C |x|^{-\mu}. \nonumber
\end{align}}
\item{ If $ n \geq 2 $, there exist constants $ c_1, c_2 , C > 0 $ such that
\begin{align}
V(x)\ge  c_1 |x|^{-\mu},\quad - x \cdot \nabla V(x)\ge c_2 |x|^{-\mu},\quad |(x \cdot \nabla)^2V(x)|\le C |x|^{-\mu}. \nonumber
\end{align}
In this case, (3) holds with $ w (x) = |x|^{- \frac{\mu}{2}} \langle x \rangle^{-1 + \frac{\mu}{2}}  $. }
\end{itemize}
\end{example}



\begin{theorem}	
\label{theorem_3} Let $V$ satisfies Assumption \ref{assumptionalternative} and $ H = - \Delta + V $. \\
{\rm (1)} If $ n \geq 3 $, then
$$  \big| \big|  |x|^{-1} (H-z)^{-1} |x|^{-1} f \big| \big|_{L^2(\R^n)}  \leq C || f ||_{L^2(\R^n)}, \quad  z \in \C \setminus\mathbb \R,\ f \in C_0^{\infty} ({\mathbb R}^n \setminus 0) . $$
{\rm (2)} If $ n = 2 $, then, for $w$ in Assumption \ref{assumptionalternative} (3),
$$  \big| \big|  w (H-z)^{-1} w f \big| \big|_{L^2(\R^2)}  \leq C || f ||_{L^2(\R^2)}, \quad z \in \C \setminus\mathbb \R,\ f \in C_0^{\infty} ({\mathbb R}^2 \setminus 0) . $$
\end{theorem}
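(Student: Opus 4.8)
The plan is to prove Theorem \ref{theorem_3} by a Mourre-type commutator argument using the generator of dilations $A = \frac{1}{2}(x\cdot\nabla + \nabla\cdot x)$ as the conjugate operator, exploiting that the commutator $[H,iA]$ is essentially $2(-\Delta) - x\cdot\nabla V$ (plus the virial identity structure). First I would set up the regularized resolvent: fix $z = \lambda \pm i\varepsilon$ with $\lambda \in \R$, $\varepsilon > 0$, and $u = (H-z)^{-1}g$ where $g = wf$ (for $n=2$) or $g=|x|^{-1}f$ (for $n\ge 3$), so that $f \in C_0^\infty(\R^n\setminus 0)$ is compactly supported away from the singularity. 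The goal is the bound $\|u\|_{L^2(w^2\,dx)} \lesssim \|f\|_{L^2}$ (resp. with weight $|x|^{-2}$), uniform in $\lambda$ and $\varepsilon$. The key algebraic input is the virial-type identity: pairing the equation $(H-z)u = g$ against $Au$ and taking imaginary/real parts produces, schematically,
\begin{align*}
\langle [H,iA]u,u\rangle = 2\,\mathrm{Im}\,\langle g, Au\rangle - (\text{boundary/sign terms involving }\varepsilon),
\end{align*}
and since $[H,iA] = -2\Delta - x\cdot\nabla V$ in the form sense, Assumption \ref{assumptionalternative} (3) gives the coercive lower bound $\langle [H,iA]u,u\rangle \ge \delta_0(\|\nabla u\|^2 + \|wu\|^2)$ (for $n=2$; just $\|\nabla u\|^2$ for $n\ge 3$, where one then uses Hardy to control $\||x|^{-1}u\|^2$).

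Next I would handle the right-hand side $\mathrm{Im}\,\langle g, Au\rangle$. Here $Au = x\cdot\nabla u + \frac{n}{2}u$, so one needs to bound $|\langle g, x\cdot\nabla u\rangle|$ by $\|f\|_{L^2}$ times a small multiple of $(\|\nabla u\| + \|wu\|)$ plus lower-order terms absorbable by the left side. Since $g$ is supported away from the origin and decays, $x g \in L^2$, and integration by parts plus Cauchy–Schwarz with the weight should do it; the terms $\langle g,u\rangle$ are controlled via the local-smoothing/Hardy-type estimate for $u$ itself, which in turn comes from Assumption \ref{assumptionalternative} (2): pairing $(H-z)u=g$ with $u$ gives $\langle Hu,u\rangle = \lambda\|u\|^2 + \mathrm{Re}\langle g,u\rangle$ and combined with (2), $\delta_0\|\nabla u\|^2 \le \langle Hu,u\rangle - \text{(junk)}$; one needs to also extract a decaying-weighted $L^2$ norm of $u$ to close. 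This is where conditions like $|x|w\in L^\infty$ and the repulsivity enter to give the a priori control $\||x|^{-1}u\|_{L^2} \lesssim$ (something finite). A technical but essential point is justifying all these formal pairings: $Au \notin L^2$ in general, so one works with a regularized conjugate operator $A_R = \frac{1}{2}(\chi(x/R)(x\cdot\nabla) + (x\cdot\nabla)\chi(x/R))$ or inserts spectral cutoffs $\phi_\delta(H)$, derives the estimates with uniform constants, and passes to the limit $R\to\infty$, $\delta\to 0$ using Assumption \ref{assumptionalternative} (1) and (4) — condition (4) is precisely what guarantees the higher commutators $[[H,iA],iA]$ are form-bounded relative to $H$ and relative to $[H,iA]$, so the regularization errors are harmless.

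The main obstacle I anticipate is this rigorous justification rather than the algebra: because $V$ is allowed a genuine $|x|^{-2}$ singularity at the origin (and $H$ is only defined via a quadratic form), $A$ does not preserve the form domain in an obvious way, and one cannot freely commute $A$ with $H$. The clean way around it is to localize away from $0$ — work on $C_0^\infty(\R^n\setminus 0)$, use that $g$ has support in an annulus, and establish the estimate there with constants independent of the support; the scaling invariance of the hypotheses (the potential class scales like $|x|^{-2}$) then lets one patch different annular scales together dyadically, or alternatively a single global multiplier $a(x) = $ (smoothed, bounded version of $\log|x|$ or $|x|^{-\mu}\langle x\rangle^{\mu}$-type profile matching $w$) replaces $A$ and sidesteps domain issues entirely. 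I would expect the $n=2$ case to require slightly more care because of the logarithmic singularity of the free resolvent at $z=0$ noted in Remark \ref{remark_theorem_0}: the weight $w$ with $|x|w\in L^\infty$ and the strict repulsivity in (3) are exactly what compensates, and the proof must use the $\|wf\|^2$ term on the right of (3) in an essential way rather than discarding it. Finally, once the weighted bound is established for $z$ off the real axis with the stated uniformity, the theorem is proved as stated (no limiting absorption to the real axis is claimed).
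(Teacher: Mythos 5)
You have correctly identified the conjugate operator (the generator of dilations $A$) and the intended roles of the three parts of Assumption \ref{assumptionalternative}: (2) for positivity of $H$, (3) for coercivity of $[H,iA]=-2\Delta-x\cdot\nabla V$, and (4) for form-boundedness of the double commutator. But the mechanism you propose — a single virial identity $\langle[H,iA]u,u\rangle = 2\,\mathrm{Im}\langle g,Au\rangle - (\ldots)$ followed by Cauchy--Schwarz and absorption — has a genuine gap at exactly the point you wave away as ``boundary/sign terms involving $\varepsilon$''. Pairing $(H-\lambda-i\varepsilon)u=g$ against $Au$ and taking imaginary parts produces the term $-2\varepsilon\langle u,Au\rangle = -2\varepsilon\,\mathrm{Im}\!\int \bar{u}\,(x\cdot\nabla u)\,dx$, which has no sign and is not $O(\varepsilon\|u\|^2)$. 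Controlling it requires a priori bounds on $r$-weighted quantities like $\varepsilon\|r^{1/2}u\|_{L^2}^2$ and $\varepsilon\|r^{1/2}\nabla u\|_{L^2}^2$, which in the critical-decay setting of Appendix \ref{appendix_C} are extracted from the additional identities \eqref{proof_C_4}--\eqref{proof_C_5} and hinge on $rV|u|^2$ and $\partial_r(rV)|u|^2$ being dominated by $|\nabla u|^2$. Under Assumption \ref{assumptionalternative} the potential is long range ($-x\cdot\nabla V\gtrsim\langle x\rangle^{-\mu}$ with $\mu$ possibly less than $2$), so none of these $r$-weighted controls are available, and your absorption scheme cannot close. (Your fallback of ``patching annular scales by scaling invariance'' also does not apply here: as noted in Section \ref{sectionresults}, Theorem \ref{theorem_3} is precisely the result that is \emph{not} invariant under the scaling \eqref{scalingofpotentials}.)

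The paper avoids this term altogether by the weakly conjugate operator method of Section \ref{sectionweakMourre}: one perturbs the resolvent itself, setting $G_{\epsilon}(z)=(\tilde H-z-i\epsilon\tilde S)^{-1}$ with $\tilde S$ the form extension of $[H,iA]$, and studies $F_{\epsilon}(z)=BG_{\epsilon}(z)B^{*}$ with $B=S^{1/2}(A+i\kappa)^{-1}$. The dissipativity of $-i\epsilon\tilde S$ yields the quadratic estimate of Proposition \ref{bornedissipative}, the derivative formula of Proposition \ref{algebretechnique} (whose error term is controlled by the double-commutator bound \eqref{proof_lemma_resolvent_6_6}, i.e.\ Assumption (4)) yields the differential inequality $\|\tfrac{d}{d\epsilon}F_{\epsilon}\|\le C\|F_{\epsilon}\|+2|\epsilon|^{-1/2}\|F_{\epsilon}\|^{1/2}$, and integrating from the initial bound $\|F_{\epsilon}\|\le C\epsilon^{-1}$ gives uniform boundedness as $\epsilon\to0$ (Theorem \ref{theorem_resolvent_7}). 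Only then are the physical weights $|x|^{-1}$ (via Hardy) or $w$ recovered from $S^{1/2}(A+i\kappa)^{-1}$, using the lower bound in Assumption (3) and $|x|w\in L^{\infty}$. Your write-up contains none of this $\epsilon$-regularization of the resolvent, which is the essential idea; the domain issues you correctly flag (handled in the paper by Proposition \ref{lemma_resolvent_6} and the Lax--Milgram construction of $G_{\epsilon}(z)$) are secondary to this missing step.
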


\begin{remark}
The proof of this theorem is quite different from that of Theorem \ref{theorem_0} and based on a version of Mourre's theory. 
The brief outline is as follows (see Section \ref{sectionweakMourre} for details): the commutator $S:=[H,iA]=-\Delta-x\cdot\nabla V$ is positive and the double commutator $[H,S]$ satisfies $-S\lesssim [H,S]\lesssim S$ by Assumption \ref{assumptionalternative} (3) and (4), respectively, where $A=-i(x\cdot\nabla+\nabla\cdot x)/2$ is the generator of the dilation.  Having in mind that (trivial) strict Mourre's estimate $S\ge (S^{1/2})^2$ holds without any spectral localization, one can show by means of Mourre's differential inequality technique (in this step a careful justification of routine arguments will be required due to the strong singularity of $V$) that, for a large constant $\kappa>1$, the operator $$S^{1/2}(A+i\kappa)^{-1}(H-z)^{-1}(A-i\kappa)^{-1}(S^{1/2})^*$$ is bounded on $L^2$ uniformly in $z\notin\R$. This uniform bound, together with Hardy's inequality if $n\ge3$ or Assumption \ref{assumptionalternative} (3) itself if $n=2$, yields the assertion. 
\end{remark}
As a consequence, we obtain weighted $ L^2 $ space-time estimates.
\begin{corollary} \label{Corollaire-Mourre}
 Let $V$ satisfies Assumption \ref{assumptionalternative}. Let $u$ be given by \eqref{equation_2}. \\
{\rm (1)} If $ n \geq 3 $, then there exists  $ C > 0 $ such that
$$  \big| \big|  |x|^{-1} u \big| \big|_{L^2({\mathbb R};L^2(\R^n))}  \leq C || \psi ||_{L^2(\R^n)} + C \big| \big| |x| F \big| \big|_{L^2 ({\mathbb R};L^2(\R^n))}, $$
for all $ \psi \in L^2(\R^n) $ and $ F \in L^1_{\rm loc} ({\mathbb R};L^2(\R^n)) $ such that $ |x|F \in L^2 ({\mathbb R};L^2(\R^n))  $.

\smallskip

\noindent {\rm (2)} If $ n = 2 $, then
then there exists  $ C > 0 $ such that
$$  \big| \big|  w u \big| \big|_{L^2({\mathbb R};L^2(\R^2))}  \leq C || \psi ||_{L^2(\R^2)} + C \big| \big| w^{-1} F \big| \big|_{L^2 ({\mathbb R};L^2(\R^2))}, $$
for all $ \psi \in L^2(\R^2) $ and $ F \in L^1_{\rm loc} ({\mathbb R};L^2(\R^2)) $ such that $ w^{-1} F \in L^2 ({\mathbb R};L^2(\R^2)) $.
\end{corollary}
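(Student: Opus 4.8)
\medskip
\noindent\emph{Proof strategy.} The plan is to recognize Corollary \ref{Corollaire-Mourre} as the standard dispersive consequence of the uniform weighted resolvent bounds of Theorem \ref{theorem_3}, obtained through Kato's theory of $H$-smooth perturbations, and to handle the homogeneous and the Duhamel terms in \eqref{equation_2} separately. Write $R(z)=(H-z)^{-1}$, and let $w$ denote $|x|^{-1}$ when $n\ge3$ and the function of Assumption \ref{assumptionalternative} (3) when $n=2$; in both cases $w$ is real-valued, positive almost everywhere and locally square integrable on $\R^n\setminus 0$, so the multiplication operator $f\mapsto wf$ on $C_0^\infty(\R^n\setminus 0)$ is symmetric and its closure $A$ is a densely defined self-adjoint operator on $L^2(\R^n)$ having $C_0^\infty(\R^n\setminus 0)$ as a core. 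The first step is to read Theorem \ref{theorem_3} as the assertion that the operators $AR(z)A$, a priori defined on $\{Ag:g\in C_0^\infty(\R^n\setminus 0)\}$, extend to bounded operators on $L^2(\R^n)$ with $M:=\sup_{z\in\C\setminus\R}\|AR(z)A\|_{L^2\to L^2}<\infty$.

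Granting this, Kato's criterion yields that $A$ is $H$-smooth, i.e.\ the operator $W\colon L^2\to L^2(\R;L^2)$, $W\psi(t):=Ae^{-itH}\psi$, is bounded with $\|W\|\le C\sqrt M$; applied to $\psi$, this controls the contribution of $e^{-itH}\psi$ to $\|wu\|_{L^2(\R;L^2)}$. For the Duhamel term, split $\Gamma_H F(t)=\int_0^te^{-i(t-s)H}F(s)\,ds$ as $\Gamma_H^{\mathrm{ret}}F(t)-e^{-itH}\psi_0$, where $\Gamma_H^{\mathrm{ret}}F(t):=\int_{-\infty}^te^{-i(t-s)H}F(s)\,ds$ is convolution in $t$ with $\mathbf 1_{\{t>0\}}e^{-itH}$ and $\psi_0:=\int_{-\infty}^0e^{isH}F(s)\,ds$. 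With $G:=w^{-1}F\in L^2(\R;L^2)$ (so $F=AG$), one has $\psi_0=W^*(\mathbf 1_{\{s<0\}}G)$, hence $\|\psi_0\|_{L^2}\le\|W^*\|\,\|G\|_{L^2(\R;L^2)}$ and the corresponding term $W\psi_0=Ae^{-i\cdot H}\psi_0$ is bounded in $L^2(\R;L^2)$ by $\|W\|\,\|W^*\|\,\|G\|\le CM\|G\|$. For $\Gamma_H^{\mathrm{ret}}$, a Fourier transform in $t$ represents $A\Gamma_H^{\mathrm{ret}}A$ as multiplication in the dual variable $\rho$ by $iAR(-\rho+i0)A$ (up to a sign/normalization depending on the Fourier convention, with $R(-\rho+i0):=\lim_{\e\downarrow0}R(-\rho+i\e)$); since $\|AR(-\rho+i\e)A\|\le M$ for all $\e>0$ by Theorem \ref{theorem_3}, Plancherel in $t$ gives $\|A\Gamma_H^{\mathrm{ret}}A\|_{L^2(\R;L^2)\to L^2(\R;L^2)}\le M$. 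Adding the pieces, $\|w\Gamma_HF\|_{L^2(\R;L^2)}=\|A\Gamma_H(AG)\|_{L^2(\R;L^2)}\le CM\|G\|_{L^2(\R;L^2)}=CM\|w^{-1}F\|_{L^2(\R;L^2)}$, and $\|wu\|\le\|we^{-i\cdot H}\psi\|+\|w\Gamma_HF\|$ concludes.

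The scheme is classical (cf.\ \cite{Kato,RoSc}), so I expect the only genuine obstacle to be the functional-analytic bookkeeping needed to make it rigorous in this singular setting, rather than any new analytic estimate. First, one must verify that the estimates of Theorem \ref{theorem_3}, stated on $C_0^\infty(\R^n\setminus 0)$, indeed define bounded operators $AR(z)A$ on $L^2(\R^n)$ uniformly in $z$, using that $C_0^\infty(\R^n\setminus 0)$ is a core for $A$ and a form core for $H$ (paragraph \ref{realizations}) and that $A$ is form-bounded with respect to $H+1$ (via Hardy's inequality when $n\ge3$, via Assumption \ref{assumptionalternative} (3)--(4) when $n=2$), so that $A(H-z)^{-1}$ is bounded and the compositions above make sense. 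Second, the boundary values $AR(-\rho\pm i0)A$ in the Fourier representation of $\Gamma_H^{\mathrm{ret}}$ must be treated with care; rather than invoking a limiting absorption principle I would run the Plancherel argument with $AR(-\rho+i\e)A$, $\e>0$, and pass to the limit using the uniform bound $M$ and dominated convergence. Finally, all the time integrals should first be justified for $F$ in a dense class such as $C_0^\infty(\R\times(\R^n\setminus 0))$ and the estimates extended by density, which is legitimate since both sides of the inhomogeneous bound depend continuously on $F$.
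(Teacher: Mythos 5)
Your proposal is correct and follows the same route as the paper, whose proof of this corollary is a one-line appeal to Theorem \ref{theorem_3} combined with the Kato smoothing estimate \eqref{H-smooth_2} and the supersmoothing estimate \eqref{H_supersmooth_2} (the latter cited from D'Ancona). Your retarded/advanced splitting of the Duhamel term and the Plancherel argument in $t$ are exactly the standard proofs of those two cited facts, so you have merely unpacked the black boxes the paper invokes.
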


\begin{remark}
Uniform resolvent estimates for long-range potentials decaying like $\langle x\rangle^{-\mu}$ at infinity (with less singularities than the present case) have been previously established by \cite{Nak,FoSk} with the usual smooth weight $\langle x\rangle^{-\rho}$ for some $\rho>1/2+\mu/4$. Our assumption does not require such a pointwise decaying condition at infinity. In passing, we also show that we can allow the singular weight $|x|^{-1}$, which (as already observed in Remark \ref{remark_theorem_0}) would be an important input in the application to Strichartz estimates for long-range singular potentials. 

Another closely related reference is the paper \cite{Richard}. Compared to this one, our main contribution is a simplification of the proof (which is closer to the original Mourre theory and does not use interpolation spaces); we also consider more singular potentials but, as mentioned by \cite{Richard} which formally only considered smooth potentials, one can expect the techniques of  \cite{Richard} to work as well for potentials similar to ours.
\end{remark}

The rest of the paper is organized as follows. In the  first part of the next section, we record several basic facts on some Function spaces used throughout the paper. The second part discusses the precise definition of our Schr\"odinger operator $H=-\Delta+V$ and its domain. Section \ref{sectionabstraite} is devoted to abstract perturbation methods which play a crucial role in the proof of main theorems. In Section \ref{free}, we collect several known results on uniform estimates for the free resolvent. In Section \ref{section_proof}, we prove the main theorems, except Theorem \ref{theorem_3} and Corollary \ref{Corollaire-Mourre}, by using materials prepared in Sections \ref{sectionabstraite} and \ref{free} and Appendix \ref{appendix_C}, while the proof of Theorem \ref{theorem_3} and Corollary \ref{Corollaire-Mourre} is given in Section \ref{sectionweakMourre}. In Appendix \ref{Christ_Kiselev}, we recall the Christ-Kiselev lemma which will be used several times in the paper. Finally, Appendix \ref{appendix_C} is devoted to the proof of Theorem \ref{theorem_resolvent_2main} on uniform resolvent estimates with the homogeneous weight $|x|^{-1}$. 

\bigskip

\noindent {\bf Acknowledgments.} It is a pleasure to thank Thomas Duyckaerts, Colin Guillarmou, Andrew Hassell and Nikolay Tzvetkov for sharing the (Duyckaerts) trick used in Proposition \ref{lemma_resolvent_1}. JMB is partially supported by ANR Grant GeRaSic, ANR-13-BS01-0007-01. HM is partially supported by JSPS Grant-in-Aid for Young Scientists (B), No. 25800083 and by Osaka University Research Abroad Program, No. 150S007.

\section{Preliminary materials}
\setcounter{equation}{0} 
\subsection{ Lorentz and Morrey-Campanato spaces} \label{sectionnotationLorentz}
Given a measure space $ (X,\mu) $ and indices $ 0 < q,\sigma \leq \infty  $, the Lorentz space $ L^{q,\sigma} $ is the set of measurable functions $f : X \rightarrow \C $ for which, if we let $ d_f (\alpha) = \mu( \{ x \ | \ |f(x)| > \alpha \} ) $ be the distribution function defined for $ \alpha \geq 0 $ and $ f^* (s) = \inf \{ \alpha > 0 \ | \ d_f (\alpha) \leq s \} $ the rearrangement defined for $ s > 0 $,   
$$ || f ||_{L^{q,\sigma}} :=   \big| \big|s^{\frac{1}{q} - \frac{1}{\sigma} } f^* \big| \big|_{L^{\sigma}((0,\infty),ds)} < \infty .  $$
Two functions of $ L^{q,\sigma} $ that coincide a.e. will be considered equal, as in usual Lebesgue spaces. We note in passing that $ L^{q,q} = L^q $ when $ q \geq 1 $.
The functional $ || \cdot ||_{L^{q,\sigma}} $ is in general not a norm (the triangle inequality fails). However, when $ 1 < q < \infty $ and $ 1 \leq \sigma \leq \infty $, there is a norm $ ||| \cdot |||_{L^{q,\sigma}} $ on $ L^{q,\sigma} $ for which $ L^{q,\sigma} $ is a Banach space and which is equivalent to $ || \cdot ||_{L^{q,\sigma}} $ in the sense that $ || f ||_{L^{q,\sigma}} \leq ||| f |||_{L^{q,\sigma}} \leq C(q,\sigma) || f ||_{L^{q,\sigma}} $ for some positive constant $ C (q,\sigma) $. Thus all continuity estimates for linear operators can be expressed in terms of $ || \cdot ||_{L^{q,\sigma}} $. The Lorentz spaces are non decreasing in $  \sigma $, i.e. $ L^{q, \sigma_1} \subset L^{q, \sigma_2} $ if $ \sigma_1 \leq \sigma_2 $, with continuous embeddings. If $ 1 \leq q, \sigma  \leq \infty $ and $ \frac{1}{q_1} + \frac{1}{q_2} = \frac{1}{q} $, $ \frac{1}{\sigma_1} + \frac{1}{\sigma_2} = \frac{1}{\sigma} $, one has  the H\"older inequality
\begin{align}
 ||f g ||_{L^{q,\sigma}}  \le C || f ||_{L^{q_1,\sigma_1}} ||g||_{L^{q_2 , \sigma_2 }}  . \label{HolderLorentz}
\end{align}
If $ 1 < q, \sigma<\infty $, if $  (X , \mu ) $ has no atoms and is sigma-finite, one has $ \big( L^{q,\sigma} \big)^* = L^{q^{\prime},\sigma^{\prime}} $ and
\begin{align}
 || g ||_{L^{q^{\prime},\sigma^{\prime}}} \approx \sup_{1=||f||_{L^{q,\sigma}} } \left| \int_X f g d \mu \right|   , \label{pourcalculnorme1}
\end{align} 
where $ \approx $ means that the quotient of the two sides (when $ g \ne 0 $) is bounded from above and below by constants independent of $g$ (see \cite[pp 54-55]{Grafakos}). Using that simple functions are dense in $ L^{q,\sigma} $, one may restrict $f$ to the set of simple functions in the above supremum. For $1<q_1,q_2,\sigma_1,\sigma_2<\infty$, it is also useful to recall that if there exist dense subsets $\mathcal D_1\subset L^{q_1,\sigma_1}$ and $\mathcal D_2\subset L^{q_2,\sigma_2}$ such that a linear operator $A$ satisfies 
$$
|\langle Af,g\rangle|\le C||f||_{L^{q_1,\sigma_1}}||g||_{L^{q_2,\sigma_2}},\quad f\in \mathcal D_1,\ g\in \mathcal D_2,
$$
where $\langle f,g\rangle:=\int_Xf\overline gd\mu$, then \eqref{pourcalculnorme1} and the fact $(L^{q,\sigma} \big)^* = L^{q^{\prime},\sigma^{\prime}}$ imply that 
$$
||Af||_{L^{q_2',\sigma_2'}}\le C||f||_{L^{q_1,\sigma_1}},\quad f\in\mathcal D_1,
$$
and thus $A$ has a bounded closure as an operator in  $\mathbb B(L^{q_1,\sigma_1},L^{q'_2,\sigma'_2})$. 

Since $ L^{q,\sigma} $ is a Banach space if $ 1 < q,\sigma < \infty $, the space $ L^2_T L^{q,\sigma} := L^2 \big( [-T,T] , L^{q,\sigma} \big) $  is a Banach space for the norm $ || F ||_{L^2_T L^{q,\sigma}} = ( \int_{-T}^T || F (t) ||_{L^{q,\sigma}}^2 dt )^{1/2} $ (defined by means of Bochner's integrals, see \cite{Arendt}). We denote the natural (sesquilinear) duality between $ L^2_T L^{q,\sigma} $ and $ L^2_T L^{q^{\prime},\sigma^{\prime}} $  by
\begin{align}
 \langle F , G \rangle_T = \int_{-T}^{T} \left( \int_{X} F (t)  \overline{G(t)} d \mu \right) dt .  \label{pairing}
\end{align}
Similarly to (\ref{pourcalculnorme1}), when $ (X,\mu) $ is sigma-finite with no atoms, one has (see \cite[Prop. 4.5.7]{Grafakos}),
\begin{align}
 || F ||_{L^2_T L^{q,\sigma}} \approx \sup_{1=||G||_{L^2_T L^{q^{\prime},\sigma^{\prime}}} } | \langle F , G \rangle_T|  . \label{pourcalculnorme2}
 \end{align}
 Furthermore, since $ L^{q,\sigma}(X) $ is reflexive it has the so-called Radon-Nikodym property, hence one has $ \big( L^2_T L^{q,\sigma} \big)^* = L_T^2 L^{q^{\prime},\sigma^{\prime}} $ (see \cite{Diestel}). As above, if there exist dense subsets $\mathcal D_j\subset L^2_TL^{q_j,\sigma_j}$ such that a linear operator $A$ satisfies 
 $$
| \langle AF,G \rangle_{T}|\le C||F||_{L^2_TL^{q_1,\sigma_1}}||G||_{L^2_TL^{q_2,\sigma_2}},\quad (F,G)\in \mathcal D_1\times \mathcal D_2,
 $$
  then \eqref{pourcalculnorme2} implies that $A$ extends to a bounded operator from $L^2_TL^{q_1,\sigma_1}$ to $L^2_TL^{q'_2,\sigma'_2}$. 

In the special case where $ X = {\mathbb R}^n $ with $ n \geq 3 $,  the Sobolev space $ {\mathcal H}^1 $ is contained in $ L^{2^*,2} $ and 
\begin{align}
     || f ||_{L^{2^*,2}} \le C || \nabla f ||_{L^2 }  , \label{SobolevLorentz}
 \end{align} 
 which is slightly more precise than the usual Sobolev inequality since $ L^{2^*,2} \subset L^{2^*} $ (see \cite{Talenti,Cassani}). Here and below, when $n\ge3$, we use the classical notation  $$ 2^* = \frac{2n}{n-2} ,\quad 2_* = \frac{2 n}{n+2}.$$

 We next recall basic results on Morrey-Campanato spaces. As Lebesgue and Lorentz spaces (on  $ {\mathbb R}^n $), they satisfy the H\"older inequality
 \begin{align}
  || f g ||_{M^{q_0,\sigma_0}} \leq ||f||_{M^{q_1,\sigma_1}} ||g ||_{M^{q_2,\sigma_2}} \label{HolderMorrey}
\end{align}  
 if $ 1 \leq \sigma_j  \leq q_j < \infty$, $ \frac{1}{p_0} = \frac{1}{p_1} + \frac{1}{p_2} $ and $ \frac{1}{\sigma_0} = \frac{1}{\sigma_1} + \frac{1}{\sigma_2} $. They are non increasing in $  \sigma $, i.e.
 $$ L^q = M^{q,q} \subset L^{q,\infty} \subset M^{q,\sigma_2} \subset M^{q,\sigma_1}, \quad 1 \leq \sigma_1 \leq \sigma_2 <  q . $$
 We also recall that $|x|^{-1}\notin L^q$ for any $q$, but
 \begin{align}
 |x|^{-1} \in L^{n,\infty} \cap M^{n,\sigma_0}, \quad 1 \leq \sigma_0 < n . \label{espacexmoins1}
 \end{align}
 We also have the following important estimate (see \cite[Corollary after Theorem 5 in Chap. II]{Fef}): if $ V \in M^{\frac{n}{2}, \sigma} $ for some $ \sigma >1  $, then
 \begin{align}
  \big| \big| |V|^{\frac{1}{2}} f \big| \big|_{L^2} \leq C || \nabla f ||_{L^2}, \quad f \in C_0^{\infty} ({\mathbb R}^n \setminus 0) . \label{Fefferman}
\end{align} 
 We will see the interest of this property in the next paragraph and in Appendix \ref{appendix_C}.

 \subsection{Self-adjoint realizations} \label{realizations}

We denote by $ {\mathcal H}^1 = \{ f \in L^2 (\R^n) \ | \ ||f||_{L^2}^2 + ||\nabla f||_{L^2}^2 < \infty \} $ the usual Sobolev space. 


Given a locally integrable function $V : \R^n \rightarrow \R$, $ n \geq 2 $, we define the sesquilinear form
\begin{align}
 Q_H (f,g) = \langle f  , (-\Delta + V) g \rangle := \langle \nabla f , \nabla g \rangle + \int V f \bar{g} dx  , \label{formequadratiquereutilisee}
\end{align}
first on $ C_0^{\infty} (\R^n \setminus 0) $. If it is nonnegative (as will always be the case in this paper), we let
$$ {\mathcal G}^1 = \mbox{closure of} \ C_0^{\infty} (\R^n \setminus 0) \ \mbox{for the norm} \ \big( ||f||^2_{L^2} + Q_H (f,f) \big)^{1/2} , $$
and still denote by $ Q_H $ the unique continuous extension of (\ref{formequadratiquereutilisee})  to $ {\mathcal G}^1 $. Defining (\ref{formequadratiquereutilisee}) on $ C_0^{\infty} (\R^n \setminus 0) $ rather than on $ C_0^{\infty}(\R^n) $  allows $V$ to have strong singularities at the origin, typically in dimension 2 where the Hardy inequality fails. We note however that when $ n \geq 3 $, $ C_0^{\infty} (\R^n \setminus 0) $ is dense in $  {\mathcal H}^1  $ so, if one knows additionally that $   (\delta-1)|| \nabla f||_{L^2}^2 \leq \langle V f ,f \rangle   \le C ||f||_{{\mathcal H}^1}^2   $  for some $ \delta > 0 $ and all $ f \in C_0^{\infty} (\R^n \setminus 0) $, then $ {\mathcal G}^1 = {\mathcal H}^1 $ (with equivalence of norms).

According to the assumptions (\ref{assumption_1_1}) and (\ref{positifdim2}), $ Q_H $ is nonnegative for the potentials considered in Theorems \ref{theorem_0}, \ref{theoremevariante} and \ref{theorem_1} (as well as in Corollaries \ref{corollary_theorem_0} and \ref{corollary_2}). In dimension $ n \geq 3 $, 
 the estimate (\ref{Fefferman}) implies that $ {\mathcal G}^1 = {\mathcal H}^1 $ since
\begin{align}
V = |x|^{-1} |x| V \in M^{\frac{n}{2},\sigma_1} \quad \mbox{for some} \  \frac{n-1}{2} < \sigma_1 < \frac{n}{2} \label{VMorrey}
\end{align}
by the H\"older inequality (\ref{HolderMorrey}) and (\ref{espacexmoins1}) (by choosing $ n - 1 < \sigma_0 < n $) together with the fact that $ |x| V \in M^{n,2\sigma} $.
  In dimension 2, we only have the continuous embedding $ {\mathcal G}^1 \subset {\mathcal H}^1 $. 
 
 In Theorem \ref{theorem_3}, we consider Assumption \ref{assumptionalternative} thanks to which the form (\ref{formequadratiquereutilisee}) is well defined on $ C_0^{\infty} ({\mathbb R}^n \setminus 0) $;  indeed, $  V f $ does not belong to $ L^2 $ in general, but the integral $ \int V f \bar{g} $ is well defined since $ V f \bar{g} \in L^{1} ({\mathbb R}^n) $ if $ f , g \in C_0^{\infty} ({\mathbb R}^n \setminus 0) $ (in dimension $n \geq 3$, $ L^{\frac{n}{2},\infty} \subset L^{1}_{\mathrm{loc}}  $ using the H\"older inequality (\ref{HolderLorentz}) and that characteristic functions of compact sets belong to  $ L^{{n}/{(n-2)},1} $). Thanks to (2), $ Q_H $ is nonnegative and $ {\mathcal G}^1 \subset {\mathcal H}^1 $.   We also record in passing that $ \langle (x \cdot \nabla V) f , g \rangle $ must be interpreted in the distributions sense  $ - \int V \nabla_x \cdot (x f \bar{g}) dx $. The same remark holds for $ (x\cdot \nabla V)^2 $. 
 

In all these cases, we can define the self-adjoint operator $ H : D (H) \rightarrow L^2 $ in the usual way: the domain is given by
$$ D (H) = \big\{ f \in {\mathcal G}^1 \ | \ | Q_H (f,g) | \leq C_f ||g||_{L^2} \ \mbox{for all} \ g \in {\mathcal G}^1 \big\} $$
and then $ H f $ is the unique element in $L^2$ such that $ Q_H (f,g) = \langle H f , g \rangle $ for all $g \in {\mathcal G}^1 $. $ D (H) $ is dense in $L^2$ and in $ {\mathcal G}^1 $. Furthermore, $ {\mathcal G}^1 $ is continuously embedded into the Sobolev space $ {\mathcal H}^1 $.



 \section{A method of smooth perturbations} \label{sectionabstraite}
 \setcounter{equation}{0}
Let $\mathcal H$ be a complex Hilbert space with inner product $\langle\cdot,\cdot\rangle$ and norm $||\cdot||$. Given two self-adjoint operators $ (H_0,D(H_0)) $ and $ (H,D(H)) $ on $\mathcal H$, we prepare abstract smooth perturbation techniques which enable us to deduce estimates between Banach spaces for the resolvent $(H-z)^{-1}$ or the evolution group $e^{-itH}$ of the perturbed Hamiltonian $H$ from corresponding estimates for the free Hamiltonian $H_0$ and weighted estimates for $(H-z)^{-1}$ or $e^{-itH}$ in Hilbert spaces. 

Throughout this section we assume that $H$ can be written as $ H = H_0 + Y^* Z $ for some densely defined closed operators $ (Y,D (Y)) $ and $ (Z,D(Z)) $ in the sense that
 \begin{align}
 \label{conditionsdomaines}
 &D (H_0) \cup D(H)\subset D (Y) \cap D (Z),\\
  &\langle  H f  , g \rangle 
  =   \langle f , H_0 g \rangle + \langle Z f , Y g \rangle \quad\text{for}\quad f \in D (H) , \ g \in D (H_0).   \label{symmetrierigorouseuse}
 \end{align} 
These conditions will be satisfied in our applications. Note that under these conditions, $Y^*,Z^*$ are also densely defined closed operators (see \cite[Theorem VIII.1]{Reed_Simon}) and hence $Y$ and $Z$ are both $H_0$- and $H$- bounded by the closed graph theorem. We denote the resolvents of $H_0$ and $H$ by 
\begin{align}
\label{resolvent_1}
R_{H_0} (z) = (H_0 - z)^{-1}:\mathcal H\to D(H_0), \quad R_H (z) = (H-z)^{-1}:\mathcal H\to D(H)
\end{align}
for $z\in \rho(H_0)\cap \rho(H)$, where, given a linear operator $A$, $\rho(A)$ denotes the resolvent set of $A$. 

Recall that a pair of two Banach spaces $(\mathcal A_1,\mathcal A_2)$ is said to be a Banach couple if $\mathcal A_1,\mathcal A_2$ are algebraically and topologically embedded in a Hausdorff topological vector space $\widetilde {\mathcal A}$. Note that $\mathcal A_1\cap \mathcal A_2$ is well defined in this case. Then our abstract result on resolvent estimates is as follows: 

 \begin{proposition}[Abstract resolvent estimates]
\label{proposition_abstract_1} Let $\mathcal A$ and $\mathcal B$ be two Banach spaces such that $(\mathcal H,\mathcal A)$ and $(\mathcal H,\mathcal B)$ are Banach couples. 
Suppose $z\in \rho(H_0)\cap \rho(H)$ and assume there exist positive constants $ r_1,...,r_5$  (possibly depending on $z$) such that 
\begin{align}
 \big| \langle R_{H_0}(z) \psi , \varphi \rangle \big| 
 &\le  r_1  ||\psi||_{\mathcal A} ||\varphi||_{\mathcal B},   \label{proposition_abstract_1_1} \\
   || Z R_{H_0} (z)\psi || 
   &\le r_2 || \psi ||_{\mathcal A} , \label{proposition_abstract_1_2}   \\
  ||Y R_{H_0} (z) \psi ||    
  &\le r_3  ||\psi||_{\mathcal A},\label{proposition_abstract_1_3} \\
  ||Y R_{H_0} (\overline z) \varphi ||  
  &\le r_4  ||\varphi||_{\mathcal B}\label{proposition_abstract_1_4}\\
 || Z R_H (\bar{z}) Z^* h ||   &\le r_5 ||h||  \label{proposition_abstract_1_5}
\end{align}
for all  $ \psi \in \mathcal H\cap{\mathcal A}$, $\varphi\in\mathcal H\cap \mathcal B$ and $h \in D (Z^*)$.
Then, for all $ \psi \in \mathcal H\cap{\mathcal A}$ and $\varphi\in\mathcal H\cap \mathcal B$,
\begin{align}
  \big| \langle R_H(z) \psi , \varphi \rangle \big|  \leq  \big(r_1+ r_2r_4+  r_3r_4 r_5 \big) || \psi ||_{\mathcal A}  || \varphi ||_{\mathcal B} .  \label{inegaliteresolvantefaible}
\end{align}  
\end{proposition}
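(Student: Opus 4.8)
The plan is to establish the weak resolvent identity
$$
\langle R_H(z)\psi,\varphi\rangle = \langle R_{H_0}(z)\psi,\varphi\rangle - \langle Z R_{H_0}(z)\psi,\, Y R_{H_0}(\bar z)\varphi\rangle + \langle Z R_H(\bar z) Z^* \big(Y R_{H_0}(z)\psi\big),\, Y R_{H_0}(\bar z)\varphi\rangle,
$$
which is the ``iterated resolvent identity'' $R_H = R_{H_0} - R_{H_0} Y^* Z R_{H_0} + R_{H_0} Y^* Z R_H Z^* Y R_{H_0}$ tested against $\psi$ and $\varphi$, rewritten so that every factor that appears has already been controlled by one of the hypotheses \eqref{proposition_abstract_1_1}--\eqref{proposition_abstract_1_5}. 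Once this identity is proven, the estimate \eqref{inegaliteresolvantefaible} is immediate: bound the three terms by $r_1\|\psi\|_{\mathcal A}\|\varphi\|_{\mathcal B}$, by $r_2\|\psi\|_{\mathcal A}\cdot r_4\|\varphi\|_{\mathcal B}$ (using Cauchy--Schwarz in $\mathcal H$ together with \eqref{proposition_abstract_1_2} and \eqref{proposition_abstract_1_4}), and by $r_5\,\|Y R_{H_0}(z)\psi\|\cdot r_4\|\varphi\|_{\mathcal B}\le r_3 r_5 r_4 \|\psi\|_{\mathcal A}\|\varphi\|_{\mathcal B}$ (using \eqref{proposition_abstract_1_5}, then \eqref{proposition_abstract_1_3}, then \eqref{proposition_abstract_1_4}). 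Note that the middle term requires knowing that $Z^* Y R_{H_0}(z)\psi$ makes sense, i.e. that $Y R_{H_0}(z)\psi \in D(Z^*)$; this will come out of the derivation of the identity. So the whole content of the proposition is the rigorous justification of the resolvent identity in this weak, domain-sensitive form.

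First I would work at the level of genuine operator identities on dense domains, then pass to the weak form. Start from the first-order identity: for $\psi\in\mathcal H$, $g\in D(H_0)$, using \eqref{symmetrierigorouseuse} with $f=R_H(z)\psi\in D(H)$ gives $\langle H R_H(z)\psi, g\rangle = \langle R_H(z)\psi, H_0 g\rangle + \langle Z R_H(z)\psi, Y g\rangle$, hence
$$
\langle R_H(z)\psi, (H_0-\bar z)g\rangle = \langle \psi,g\rangle - \langle Z R_H(z)\psi, Y g\rangle.
$$
Taking $g = R_{H_0}(\bar z)\varphi$ for $\varphi\in\mathcal H$ (legitimate since $g\in D(H_0)$, and $Yg = Y R_{H_0}(\bar z)\varphi$ is defined by \eqref{conditionsdomaines}) yields the first-order weak identity
$$
\langle R_H(z)\psi,\varphi\rangle = \langle R_{H_0}(z)\psi,\varphi\rangle - \langle Z R_H(z)\psi,\, Y R_{H_0}(\bar z)\varphi\rangle.
$$
Now I iterate. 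The term $\langle Z R_H(z)\psi, Y R_{H_0}(\bar z)\varphi\rangle$ must be re-expanded in $R_H$. Apply the same first-order identity but with the roles arranged to peel off a factor from the left: writing $\langle Z R_H(z)\psi, h\rangle$ for $h\in D(Z^*)$ equals $\langle R_H(z)\psi, Z^* h\rangle$, and one wants to replace the inner $R_H(z)\psi$ using $R_H(z) = R_{H_0}(z) - R_{H_0}(z) Y^* Z R_H(z)$, again justified weakly. Concretely, from the first-order identity with the self-adjointness of $H_0,H$ one also has, for $\phi\in\mathcal H$ and $k\in D(Z^*)$,
$$
\langle Z R_H(z)\psi,\, k\rangle = \langle Z R_{H_0}(z)\psi,\, k\rangle - \langle Y R_{H_0}(\bar z) \big(\text{something}\big), \dots\rangle,
$$
so the careful bookkeeping is to choose $k = Y R_{H_0}(\bar z)\varphi$ (which lies in $D(Z^*)$ precisely because, by \eqref{proposition_abstract_1_5}, $R_H(\bar z) Z^*$ makes sense on $D(Z^*)$ and one can check $Y R_{H_0}(\bar z)\varphi\in D(Z^*)$ via the closedness of $Z^*$ and an approximation of $\varphi$ by elements of $D(H_0)$) and arrive at the claimed three-term identity. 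The cleanest route is probably: (i) prove $R_H(z) - R_{H_0}(z) = -R_{H_0}(z)Y^*ZR_H(z) = -R_H(z)Z^*YR_{H_0}(z)$ as bounded operators using \eqref{symmetrierigorouseuse} and its adjoint form; (ii) substitute the second into the first; (iii) test against $\psi,\varphi$ and distribute $Z$, $Y$ onto the two sides using the domain inclusions \eqref{conditionsdomaines}.

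The main obstacle is exactly this domain bookkeeping: $Y$ and $Z$ are unbounded, $Y^*Z$ need not be closed or even densely defined as a product, and $R_H(z)\psi$ a priori only lies in $D(H)\subset D(Y)\cap D(Z)$, not in $D(Z^* Y)$ or similar. So I cannot simply manipulate $R_H = R_{H_0} - R_{H_0}Y^*ZR_H$ as an identity of composed operators without checking at each step that the relevant vector lies in the domain of the next operator applied. The safe strategy is to never form $Y^*$ or $Z^*$ acting on a vector unless a hypothesis guarantees it: specifically, $Z^*$ is only ever applied to $h\in D(Z^*)$ in \eqref{proposition_abstract_1_5}, and the derivation should be arranged so that the only place $Z^*$ appears is applied to $Y R_{H_0}(z)\psi$, which one shows belongs to $D(Z^*)$ — and the way to show that is to use \eqref{symmetrierigorouseuse}/its adjoint together with density of $D(H_0)$ and the closedness of the operators involved, approximating and passing to the limit. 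I would also double-check that the hypotheses are used with the correct conjugates ($z$ versus $\bar z$): \eqref{proposition_abstract_1_4} is stated at $\bar z$ and gets paired with $\varphi\in\mathcal B$ through $R_{H_0}(\bar z)$, consistent with the $g = R_{H_0}(\bar z)\varphi$ choice above. Everything else — the final three-term estimate — is a one-line application of Cauchy--Schwarz and the five bounds.
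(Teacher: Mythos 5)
Your overall architecture (a weak first\-/order resolvent identity, an iteration, then Cauchy--Schwarz with the five bounds) is the same as the paper's, and your derivation of the first\-/order identity
$\langle R_H(z)\psi,\varphi\rangle=\langle R_{H_0}(z)\psi,\varphi\rangle-\langle ZR_H(z)\psi,\,YR_{H_0}(\bar z)\varphi\rangle$
from \eqref{symmetrierigorouseuse} with $f=R_H(z)\psi$, $g=R_{H_0}(\bar z)\varphi$ is exactly the paper's. The gap is in the iteration step. Your plan rests on showing that $YR_{H_0}(z)\psi$ (equivalently $YR_{H_0}(\bar z)\varphi$) belongs to $D(Z^*)$, so that the three\-/term identity containing $Z R_H Z^*\bigl(YR_{H_0}(z)\psi\bigr)$ makes sense; you propose to obtain this from ``closedness of $Z^*$ and approximation by elements of $D(H_0)$''. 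That does not work: closedness of $Z^*$ only yields $v\in D(Z^*)$ if you produce $v_n\to v$ with $v_n\in D(Z^*)$ \emph{and} $Z^*v_n$ convergent, and nothing in \eqref{conditionsdomaines}, \eqref{symmetrierigorouseuse} or \eqref{proposition_abstract_1_1}--\eqref{proposition_abstract_1_5} gives any control of $Z^*$ on the range of $YR_{H_0}$. For the same reason the operator compositions $R_{H_0}Y^*ZR_H$ in your step (i) are not well\-/defined products. (There is also a conjugation slip: expanding $R_H(z)$ places $ZR_H(z)Z^*$ in the middle factor, whereas \eqref{proposition_abstract_1_5} controls $ZR_H(\bar z)Z^*$.)

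The paper's proof avoids ever forming the three\-/term identity. After the first\-/order identity one must estimate $\|ZR_H(z)\psi\|$, and this is done by duality: $\|ZR_H(z)\psi\|=\sup\{|\langle ZR_H(z)\psi,h\rangle|:\ h\in D(Z^*),\ \|h\|=1\}$, using density of $D(Z^*)$. Then $\langle ZR_H(z)\psi,h\rangle=\langle R_H(z)\psi,Z^*h\rangle$ is expanded with the \emph{transposed} identity \eqref{resolventeidentity2} applied to $v=Z^*h$, giving $\langle R_{H_0}(z)\psi,Z^*h\rangle-\langle YR_{H_0}(z)\psi,\,ZR_H(\bar z)Z^*h\rangle$. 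In this form $Z^*$ acts only on $h\in D(Z^*)$; the first term is $\langle ZR_{H_0}(z)\psi,h\rangle$ and is bounded by $r_2\|\psi\|_{\mathcal A}$ via \eqref{proposition_abstract_1_2}, and the second is bounded by $r_3r_5\|\psi\|_{\mathcal A}$ via \eqref{proposition_abstract_1_3} and \eqref{proposition_abstract_1_5}, yielding the stated constant $r_1+r_2r_4+r_3r_4r_5$. You should replace your steps (i)--(iii) by this duality argument; as written, the membership $YR_{H_0}(z)\psi\in D(Z^*)$ is an unproved claim on which your whole iteration rests.
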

Note that (\ref{conditionsdomaines}) and \eqref{resolvent_1} guarantee that the left hand sides of (\ref{proposition_abstract_1_2}) to (\ref{proposition_abstract_1_5}) are well defined.

\begin{remark}
As examples of $\mathcal H,\mathcal A$ and $\mathcal B$, we mainly have in mind that $\mathcal H=L^2(X)$ and $\mathcal A,\mathcal B$ are weighted $L^2$-spaces $w(x)L^2(X)$ or Lorentz spaces $L^{p,q}(X)$ on some non-atomic sigma-finite measure space $(X,\mu)$. When $X=\R^n$, $n\ge2$, one can also consider weighted Sobolev spaces $w(x)|\nabla|^{1/2}L^2(\R^n)$ as  examples of $\mathcal A,\mathcal B$. 
\end{remark}

\begin{proof}
The proof follows  from the resolvent identity, which can be written in our context as \begin{align}
 \langle R_H (z) u , v \rangle
  &= \langle R_{H_0} (z) u , v \rangle - \langle Z R_H (z) u , Y R_{H_0} (\bar{z}) v \rangle  \label{resolventeidentity1}\\
  &= \langle R_{H_0} (z) u , v \rangle - \langle Y R_{H_0} (z) u , Z R_H (\bar{z}) v \rangle . \label{resolventeidentity2}
\end{align} 
for $u, v \in \mathcal H$. \eqref{resolventeidentity1} follows from (\ref{symmetrierigorouseuse}) with $ f = R_H (z) u$ and $ g = R_{H_0} (\bar{z}) v $, while \eqref{resolventeidentity2} is verified by exchanging the roles of $u$ and $v$ in (\ref{resolventeidentity1}), replacing $z$ by $ \bar{z} $ and taking the complex conjugate. Let $ \psi \in \mathcal H\cap{\mathcal A}$ and $\varphi\in\mathcal H\cap \mathcal B$. By (\ref{resolventeidentity1}) and the Cauchy-Schwarz inequality, we have
\begin{align}
  \big| \langle R_H(z)\psi ,\varphi \rangle \big| &\le  \big| \langle R_{H_0}(z)\psi ,\varphi \rangle \big| + || Z R_H (z) \psi ||   ||Y R_{H_0} (\bar{z}) \varphi ||    \nonumber \\
  &\le r_1 || \psi||_{\mathcal A} ||\varphi||_{\mathcal B}  + r_4 ||\varphi||_{\mathcal B} || Z R_H (z) \psi ||    , \label{combiningdebut}
  \end{align}
the second line following from (\ref{proposition_abstract_1_1}) and  (\ref{proposition_abstract_1_4}).
It remains to estimate
$$ || Z R_H (z)\psi ||   = \sup_{ || h ||   = 1 } \big| \langle Z R_H (z)\psi , h  \rangle \big| ,
 $$
 where one can take $ h \in D (Z^*) $ since this domain is   dense in $\mathcal H$.
Using the other resolvent identity (\ref{resolventeidentity2}), the Cauchy Schwarz inequality, (\ref{proposition_abstract_1_2}), (\ref{proposition_abstract_1_3}) and (\ref{proposition_abstract_1_5}) we obtain
\begin{align}
 \big| \langle Z R_H (z) \psi , h  \rangle \big| &\le \big| \langle  R_{H_0} (z) \psi , Z^* h  \rangle \big| + \big| \langle Y R_{H_0} (z) \psi , Z R_H (\bar{z}) Z^* h  \rangle \big| \nonumber \\
 &\le r_2 || \psi ||_{\mathcal A} + r_3 r_5 ||\psi||_{\mathcal A} , \nonumber
\end{align} 
which together with (\ref{combiningdebut}) yield (\ref{inegaliteresolvantefaible}). 
\end{proof} 

Next we consider abstract methods to derive space-time inequalities for Schr\"odinger equations. Proposition \ref{proposition_abstract_1} follows mainly from the resolvent identity which, in our abstract framework, is written in weak form (see (\ref{resolventeidentity1}) and (\ref{resolventeidentity2})).  Similarly, the proof of Theorem \ref{theorem_inhomogeneous_1} below uses weak forms of the Duhamel formula (see Proposition \ref{prop-weakDuhamel}). Stating them rigorously requires some care and a preparatory discussion since neither $Y$ nor $Z$ are assumed to be bounded on $\mathcal H$.
  
 Let us  recall the notion of $H$-(super)smoothness in the sense of Kato \cite{Kato} and Kato-Yajima \cite{KY}.
 A densely defined closed  operator $B : D (B) \rightarrow \mathcal H  $ is  $H$-smooth (with  bound $a$) if 
\begin{align}
\label{H-smooth_1}
\sup_{z\in \C \setminus \R}| \langle (R_H(z)-R_H(\overline z))B^*f,B^*f \rangle |\le \frac{a^2}{2}||f||  ^2,\quad f\in D(B^*). 
\end{align}
This is equivalent (see \cite[Theorem XIII. 25]{Reed_Simon}) to the fact that
, for any $ \psi \in \mathcal H $, $e^{-itH} \psi $ belongs to $ D(B)$ for  a.e. $t\in\R$ and
\begin{align}
\label{H-smooth_2}
||B e^{-itH} \psi||_{L^2_t\mathcal H} := \Big( \int_{\R}|| Be^{-itH}\psi ||  ^2 d t \Big)^{1/2}\le a ||\psi||  ,\quad \psi\in \mathcal H.
\end{align}
In particular, $B e^{-itH} \psi\in L^2([-T,T];\mathcal H)\subset L^1([-T,T];\mathcal H)$ for any $T>0$.  We also recall that if $B$ is $H$-smooth then $ D (H) \subset D (B) $ and $B$ is $H$-bounded with relative bound $ 0 $ (see Theorem XIII.22  of \cite{Reed_Simon}). 
A densely defined closed  operator $B$ is called $H$-supersmooth (with  bound $a$) if 
\begin{align}
\label{H-supersmooth_1}
\sup_{z\in \C \setminus \R}| \langle R_H(z)B^*f,B^*f \rangle|\le \frac a2 || f ||  ^2 , \quad f \in D (B^*) .
\end{align}
For instance the assumption (\ref{proposition_abstract_1_5})  is satisfied if $ Z $ is $H$-supersmooth with bound $ 2 r_5 $; note however that the assumptions of Proposition \ref{proposition_abstract_1} hold for a {\it single} $z$, not {\it all} $z \in \rho(H_0)\cap\rho(H)$. Also note  that if $B$ is $H$-supersmooth with bound $a$ then $B$ is $H$-smooth with bound $\sqrt{2a}$. 

The supersmoothness property (\ref{H-supersmooth_1}) implies that, for any simple function $ F : \R \rightarrow D (B^*) $, $Be^{-i(t-s)H}B^*F(s)$ is Bochner integrable over $s\in[0,t]$ (or $[t,0]$) and that for any $ T > 0 $,
\begin{align}
\label{H_supersmooth_2}
\left| \left| \int_0^tBe^{-i(t-s)H}B^*F(s)ds\right| \right|_{L^2_T\mathcal H}
\le a || F||_{L^2_T\mathcal H},
\end{align}
(see  \cite[Theorem 2.4]{Dan}). 
Here and below, we use the notation $L^p_T \mathcal B := L^p ( [-T,T] , \mathcal B )$. 

Consider the Duhamel operator $ \Gamma_H : L^1_T \mathcal H \rightarrow C_T \mathcal H := C ([-T,T], \mathcal H) $  defined by
$$ \Gamma_H F (t) = \int_0^t e^{-i(t-s) H} F (s) ds  $$
using the Bochner integral. It is not hard to check that one has
\begin{align}
\label{H_supersmooth_3}
\langle \Gamma_H F , G \rangle_T = \langle F , \Gamma_H^* G \rangle_T, \quad F , G \in L^1_T\mathcal H,
\end{align}
where $\langle F,G\rangle_T:=\int_{-T}^T\langle F,G\rangle dt$ and 
\begin{align}
 ( \Gamma_H^* G ) (t) &= 
  {\mathds 1}_{\R^+} (t) \int_t^{T} e^{-i(t-s)H} G (s)ds 
  -
  {\mathds 1}_{\R^-} (t) \int_{-T}^t e^{-i(t-s)H} G (s)ds 
  . \label{adjointDuhamel}
\end{align}

The following lemma gives the precise meaning of the operators $B\Gamma_H$ and $B\Gamma_H^*$: 
\begin{lemma}  \label{lemmeavecchi} Assume that $ B$ is 
$ H $-smooth with bound $a$. Let $ \chi \in C_0^{\infty}(\R)$ be such that $ \chi \equiv 1 $ near $ 0 $ and $0\le \chi\le1$. Then, the strong limits $$   \overline{B \Gamma_H } := \slim_{\epsilon \rightarrow 0} B  \chi (\epsilon H) \Gamma_H , \quad  \overline{B \Gamma_H^*} := \slim_{\epsilon \rightarrow 0} B  \chi (\epsilon H) \Gamma_H^* $$ exist in $ L^2_T\mathcal H $ and satisfy, uniformly in $ T > 0 $, $$ \big| \big| \overline{B \Gamma_H }  F\big| \big|_{L^2_T\mathcal H} \leq C a \big| \big| F \big| \big|_{L^1_T\mathcal H}, \quad \big| \big| \overline{B \Gamma_H^*} \big| \big|_{L^2_T\mathcal H} \leq C a \big| \big| F \big| \big|_{L^1_T\mathcal H} $$ for some universal constant $ C $. \end{lemma}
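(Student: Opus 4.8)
The goal is to make sense of $B\Gamma_H$ and $B\Gamma_H^*$ even though $B$ is only $H$-smooth (not bounded), and to get uniform-in-$T$ bounds with $L^1_T\mathcal H$ on the right. The natural strategy is to regularize: since $\chi(\epsilon H)$ is bounded and commutes with $e^{-itH}$, the operators $B\chi(\epsilon H)\Gamma_H$ and $B\chi(\epsilon H)\Gamma_H^*$ are well defined (because $B\chi(\epsilon H)$ is bounded: $B$ is $H$-bounded, $\chi(\epsilon H)$ maps into $D(H)\subset D(B)$, and the composition is everywhere defined and closed, hence bounded). First I would establish the desired a priori bound for each fixed $\epsilon$ with a \emph{constant independent of $\epsilon$}; then I would upgrade the $\epsilon$-dependent family to a strong limit.

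For the uniform bound, the key observation is that $B\chi(\epsilon H)\Gamma_H$ and $B\chi(\epsilon H)\Gamma_H^*$ are expressible through $Be^{-itH}$, for which $H$-smoothness gives $\|Be^{-itH}\psi\|_{L^2_t\mathcal H}\le a\|\psi\|$ by \eqref{H-smooth_2}. Write $\chi(\epsilon H)\Gamma_H F(t)=\int_0^t e^{-i(t-s)H}\chi(\epsilon H)F(s)\,ds$ using that $\chi(\epsilon H)$ commutes with the group, so $B\chi(\epsilon H)\Gamma_H F(t)=\int_0^t Be^{-i(t-s)H}\chi(\epsilon H)F(s)\,ds$. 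By Minkowski's integral inequality in the Bochner sense and \eqref{H-smooth_2},
\begin{align}
\big\|B\chi(\epsilon H)\Gamma_H F\big\|_{L^2_T\mathcal H}
&\le \int_{-T}^{T}\big\|\,B e^{-i(\cdot-s)H}\chi(\epsilon H)F(s)\big\|_{L^2_T\mathcal H}\,ds \nonumber\\
&\le \int_{-T}^{T} a\,\big\|\chi(\epsilon H)F(s)\big\|_{\mathcal H}\,ds
\le a\,\|F\|_{L^1_T\mathcal H}, \nonumber
\end{align}
since $\|\chi(\epsilon H)\|_{\mathcal H\to\mathcal H}\le 1$; the same computation applied to the explicit formula \eqref{adjointDuhamel} for $\Gamma_H^*$ gives the matching bound for $B\chi(\epsilon H)\Gamma_H^*$. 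This yields the claimed inequalities with a universal constant (one can take $C=1$, or absorb factors from splitting $[0,t]$ into two halves if one prefers to invoke \eqref{H_supersmooth_2}-type estimates).

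For the existence of the strong limit, I would first check convergence on a dense subset and then use the uniform bound to pass to the whole space. A convenient dense subset of $L^1_T\mathcal H$ is simple functions valued in $D(H^\infty)=\bigcap_k D(H^k)$ (or simply in $D(H)$); for such $F$, $\Gamma_H F(t)\in D(H)\subset D(B)$ for every $t$ and $\chi(\epsilon H)\to \mathrm{Id}$ strongly with the domains behaving well, so $B\chi(\epsilon H)\Gamma_H F(t)\to B\Gamma_H F(t)$ in $\mathcal H$ for each $t$; dominated convergence (the integrands are uniformly controlled) then gives convergence in $L^2_T\mathcal H$. Combined with the $\epsilon$-uniform operator bound and a standard $3\epsilon$ argument, the strong limit exists on all of $L^1_T\mathcal H$ and inherits the bound. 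The same scheme handles $\overline{B\Gamma_H^*}$.

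The main obstacle I anticipate is purely a matter of careful justification rather than a deep difficulty: verifying that $B\chi(\epsilon H)$ is genuinely bounded and that the Bochner integrals and limits interchange cleanly. Concretely, one must confirm that $t\mapsto B e^{-itH}\psi$ is strongly measurable and that the convolution-type integral defining $B\chi(\epsilon H)\Gamma_H F(t)$ commutes with applying $B$ (this is where one uses that $\chi(\epsilon H)F(s)\in D(H)\subset D(B)$ together with closedness of $B$), and that on the dense subset the pointwise-in-$t$ convergence $B\chi(\epsilon H)\Gamma_H F(t)\to B\Gamma_H F(t)$ actually holds — which follows from $\chi(\epsilon H)\Gamma_H F(t)\to \Gamma_H F(t)$ in the graph norm of $H$ (hence in that of $B$, since $B$ is $H$-bounded) for $F$ valued in $D(H)$. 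None of these steps is hard, but they are exactly the points where the hypothesis "$B$ only $H$-smooth, not bounded" forces one to be precise.
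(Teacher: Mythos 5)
Your proof is correct, and its overall architecture (uniform-in-$\epsilon$ bound for $B\chi(\epsilon H)\Gamma_H$, convergence on a dense subset, then a $3\epsilon$ argument) matches the paper's. The one genuine difference is how you obtain the uniform bound: the paper first proves the ``untruncated'' estimate for $F\mapsto Be^{-itH}\int_{[-T,T]}e^{isH}\chi(\epsilon H)F(s)\,ds$ from \eqref{H-smooth_2} and then invokes the Christ--Kiselev lemma (Appendix \ref{Christ_Kiselev}) to restrict the integration to $[0,t]$, picking up a universal constant $C$; you instead bound the truncated integral directly by Minkowski's integral inequality, using that for each fixed $s$ the $H$-smoothness gives $\|Be^{-i(\cdot-s)H}\chi(\epsilon H)F(s)\|_{L^2_T\mathcal H}\le a\|F(s)\|_{\mathcal H}$. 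Since the source norm is $L^1$ in time, truncation costs nothing and Christ--Kiselev is unnecessary; your route is more elementary, self-contained, and yields $C=1$. For the strong limit, you take simple functions valued in $D(H)$ as the dense set, whereas the paper uses functions of the form $\chi(\epsilon_0 H)F_0$; both are dense in $L^1_T\mathcal H$ and both make the pointwise convergence $B\chi(\epsilon H)\Gamma_HF(t)\to B\Gamma_HF(t)$ legitimate (in your case via convergence of $\chi(\epsilon H)\Gamma_HF(t)$ in the graph norm of $H$ and the $H$-boundedness of $B$), so this is only a cosmetic difference. The technical points you flag — boundedness of $B\chi(\epsilon H)$, pulling the bounded operator through the Bochner integral, joint measurability for Minkowski, and a dominating function for the pointwise limit — are exactly the ones that need checking, and each is routine as you indicate.
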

 
%
\begin{proof}
We treat only the case of $ \Gamma_H $, the one of $ \Gamma_H^* $ being similar in view of the expression (\ref{adjointDuhamel}). Note first that $ \chi (\epsilon H) $ commutes with $ \Gamma_H $ and that $ B \chi (\epsilon H) $ is bounded on $ L^2 $ since $B$ is $H$-bounded so $ B \Gamma_H \chi (\epsilon H) = B \chi (\epsilon H) \Gamma_H $ is well defined on $ L^1_T\mathcal H $. Since $ \sup |\chi| \leq 1 $, the $H$-smoothness implies $$ \left| \left| B e^{-it H} \int_{[-T,T]} e^{i s H} \chi (\epsilon H) F (s)ds \right| \right|_{L^2_T\mathcal H} \leq a || F ||_{L^1_T\mathcal H}.  $$The same upper bound also holds if we replace $ [-T,T] $ by $ [0,T] $ in the left hand side. Then, using the Christ-Kiselev Lemma (see Appendix \ref{Christ_Kiselev}), we can replace $ [-T,T] $ by $  [0,t]$ up to the multiplication of $a$ by some universal constant and obtain the uniform bound\begin{align}    ||  B \chi (\epsilon H)\Gamma_H F ||_{L^2_T\mathcal H} \leq C a ||F||_{L^1_T\mathcal H} . \label{borneuniformeutile} \end{align} If $F$ is of the form $ \chi (\epsilon _0 H) F_0 $ for some fixed $ \epsilon_0 $, then $  B \chi (\epsilon H)\Gamma_H  \chi (\epsilon _0 H) F_0  $ converges to $  B \Gamma_H  \chi (\epsilon _0 H) F_0   $ in $ L^2_T\mathcal H $ by dominated convergence. The density of the functions of the form $ \chi (\epsilon _0 H) F_0  $ in $ L^2_T\mathcal H $ together with (\ref{borneuniformeutile}) allow to prove by routine arguments that $ B \chi (\epsilon H)\Gamma_H F $ converges in $ L^2_T\mathcal H $ as $ \epsilon \rightarrow 0 $ for any $F$. This completes the proof. \end{proof}


\begin{proposition}[Weak Duhamel formula] \label{prop-weakDuhamel} Let $ b $ be a bounded Borel function on $\R$. Suppose that $ Y $ is $ H_0 $-smooth and $ Z b (H) $ is $H$-smooth. 
Then,  for all $T > 0 $, $ \psi \in \mathcal H $ and $ F,G\in L^1_T\mathcal H$, one has 
\begin{align}
  \label{weakduhamel}
 \langle  \Gamma_H b (H) F  , G  \rangle_T &=  \langle \Gamma_{H_0} b(H) F  , G \rangle_T-i \langle   \overline{Z b (H) \Gamma_H} F , \overline{Y \Gamma_{H_0}^*} G   \rangle_T,\\
 \label{weakduhamelbis}
 \langle e^{-itH} b(H) \psi , G \rangle_T &=  \langle e^{-itH_0} b (H) \psi , G \rangle_T -i \langle  Z b (H) e^{-itH}  \psi , \overline{Y \Gamma_{H_0}^*} G \rangle_T,\\
  \label{weakduhamel2}
 \langle  \Gamma_H F  , b (H) G \rangle_T &=  \langle \Gamma_{H_0} F  , b (H) G \rangle_T -i \langle   \overline{Y \Gamma_{H_0}} F , \overline{Z b (H) \Gamma_{H}^*} G   \rangle_T. 
\end{align}
\end{proposition}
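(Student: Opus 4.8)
I would derive all three formulas from the differential Duhamel relation between $e^{-itH}$ and $e^{-itH_0}$, established first on a dense class of energy-localized vectors where every manipulation with the (unbounded) operators $Y,Z$ is literally meaningful, and then extended to the stated generality by a limiting argument based on Lemma~\ref{lemmeavecchi} and the smoothness hypotheses.

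The starting point is: for $\phi\in D(H)$ and $g\in D(H_0)$,
\[
 \langle e^{-itH}\phi,g\rangle=\langle e^{-itH_0}\phi,g\rangle-i\int_0^t\langle Ze^{-isH}\phi,Ye^{i(t-s)H_0}g\rangle\,ds ,
\]
obtained by differentiating the $C^1$ function $s\mapsto\langle e^{-isH}\phi,e^{i(t-s)H_0}g\rangle$ on $[0,t]$ and using \eqref{symmetrierigorouseuse} (this is cleaner than writing $H-H_0=Y^*Z$ as an operator, since \eqref{symmetrierigorouseuse} only involves the bounded sesquilinear form $\langle Z\cdot,Y\cdot\rangle$); the right-hand side is meaningful because $Y,Z$ are $H_0$- and $H$-bounded, so the integrand is continuous. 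Fixing $\chi\in C_0^\infty(\R)$ with $\chi\equiv1$ near $0$, I would apply this with $\phi=b(H)\chi(\e H)\psi$ (resp. $\phi=b(H)\chi(\e H)F(s)$, after a time translation, for the formulas involving $\Gamma_H$), which lie in $D(H)$ because $\lambda\mapsto\lambda\,b(\lambda)\chi(\e\lambda)$ is bounded, and with $g=\chi(\e H_0)G(t)\in D(H_0)$; pairing with $G(t)$, integrating in $t\in[-T,T]$, and using Fubini together with the formula \eqref{adjointDuhamel} for $\Gamma_{H_0}^*$ and the duality \eqref{H_supersmooth_3}, one obtains cut-off versions of \eqref{weakduhamel}, \eqref{weakduhamelbis}, \eqref{weakduhamel2}; for instance the one for \eqref{weakduhamelbis} reads
\[
 \langle e^{-itH}b(H)\chi(\e H)\psi,\chi(\e H_0)G\rangle_T=\langle e^{-itH_0}b(H)\chi(\e H)\psi,\chi(\e H_0)G\rangle_T-i\langle Zb(H)\chi(\e H)e^{-i\cdot H}\psi,\,Y\chi(\e H_0)\Gamma_{H_0}^*G\rangle_T ,
\]
while those for \eqref{weakduhamel} and \eqref{weakduhamel2} have the same shape with $e^{-i\cdot H}\psi$ replaced by $\Gamma_H F$ (one extra use of Fubini in the inner time variable), the latter also using \eqref{H_supersmooth_3} applied to $\Gamma_H^*$ to move $b(H)$ onto the $\Gamma_H^*$ factor.

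Finally, I would let $\e\to0$. The two ``free'' terms converge by dominated convergence, using $\chi(\e H),\chi(\e H_0)\to\mathrm{Id}$ strongly, the boundedness of $b(H)$, and that $\Gamma_{H_0}$ maps $L^1_T\mathcal H$ into $L^\infty_T\mathcal H$. For the bilinear cross term, the hypotheses that $Y$ is $H_0$-smooth and $Zb(H)$ is $H$-smooth let Lemma~\ref{lemmeavecchi} apply with $B=Y$ and $B=Zb(H)$: it yields the uniform-in-$\e$ bounds $\|Zb(H)\chi(\e H)\Gamma_H F\|_{L^2_T\mathcal H}\le C\|F\|_{L^1_T\mathcal H}$ and $\|Y\chi(\e H_0)\Gamma_{H_0}^*G\|_{L^2_T\mathcal H}\le C\|G\|_{L^1_T\mathcal H}$ (and the analogues for $\overline{Y\Gamma_{H_0}}$ and $\overline{Zb(H)\Gamma_H^*}$, while $\|Zb(H)e^{-i\cdot H}\psi\|_{L^2_T\mathcal H}\le C\|\psi\|$ is immediate from $H$-smoothness), and identifies the relevant strong limits with $\overline{Zb(H)\Gamma_H}F$, $\overline{Y\Gamma_{H_0}^*}G$, etc.; Cauchy--Schwarz in $L^2_T\mathcal H$ then passes the limit through the pairing, giving \eqref{weakduhamel}--\eqref{weakduhamel2}. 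The main obstacle is exactly this last step: one must check that every bilinear expression extends continuously in $(\psi,F,G)$ for the $\mathcal H$- and $L^1_T\mathcal H$-topologies, uniformly in $\e$, and that its $\e\to0$ limit coincides with the closures of Lemma~\ref{lemmeavecchi}. The $H_0$-smoothness of $Y$ and the $H$-smoothness of $Zb(H)$ are precisely what make this possible, while the spectral cut-offs $\chi(\e H),\chi(\e H_0)$ are what legitimize the differentiation and Fubini manipulations carried out before the limit is taken.
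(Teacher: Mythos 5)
Your proposal is correct and follows essentially the same route as the paper's proof: the differential Duhamel identity derived from \eqref{symmetrierigorouseuse} on $D(H)\times D(H_0)$, insertion of the spectral cut-offs $\chi(\epsilon H)$, $\chi(\epsilon H_0)$ to legitimize the Fubini manipulations, and passage to the limit $\epsilon\to0$ via Lemma \ref{lemmeavecchi} together with the smoothness hypotheses. The only cosmetic difference is that you carry $b(H)$ through the cut-off argument from the start, whereas the paper first establishes the identity without $b$ and substitutes $F\mapsto b(H)F$ before taking the limit; this changes nothing of substance.
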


\begin{proof}
Let $ \varphi \in D (H) $, $ \psi \in D (H_0) $ and let $ f (t) = e^{-i t H}\varphi $, $ g (t) = e^{-i t H_0} \psi $. Then, by (\ref{symmetrierigorouseuse}),
\begin{eqnarray}
 \frac{d}{dt} \langle f (t) , g (t) \rangle  = i \langle f (t) , H_0 g (t) \rangle - i \langle H f (t) , g (t) \rangle = - i \langle Z f (t) , Y g (t)  \rangle  \label{continuousintime}
\end{eqnarray}
 By integration between $ 0 $ and $ t $ and then substitution of $ \psi $ by $ e^{i t H_0} \theta $ with $ \theta \in D (H_0)$, we find
$$ \langle e^{-itH} \varphi , \theta \rangle - \langle e^{-itH_0} \varphi , \theta \rangle  = -i\int_0^t  \langle  Z e^{-irH} \varphi ,  Y e^{-i(r-t)H_0} \theta  \rangle dr . $$
Note that the integrand is continuous in $r$ hence integrable since (\ref{continuousintime}) is continuous in $t$. Changing $ t $ into $ t - s $ and then replacing 
 $ \varphi $ by $ F_{\epsilon} (s) $ and $ \theta $ by $ G_{\epsilon} (t)  $ with $F_{\epsilon} (s) = \chi (\epsilon H) F (s)$ and $G_{\epsilon} (t) = \chi (\epsilon H_0) G (t)$
(where $ \chi $ is as in Lemma \ref{lemmeavecchi}), we obtain by integration in $s$ between $0$ and $ t $,
$$ \langle  \Gamma_H F_{\epsilon} (t) , G_{\epsilon}(t) \rangle -  \langle \Gamma_{H_0} F_{\epsilon} (t) , G_{\epsilon}(t) \rangle = -i\int_0^t \left( \int_s^{t}  \langle  Z e^{-i(\tau-s)H} F_{\epsilon}(s) ,  Y e^{-i(\tau-t)H_0} G_{\epsilon}(t)  \rangle d \tau \right) ds . \nonumber
 $$
The iterate integral is well defined since  $ \langle e^{-i(t-s)H} F_{\epsilon}(s) , G_{\epsilon} (t) \rangle - \langle e^{-i(t-s)H_0} F_{\epsilon}(s) , G_{\epsilon} (t) \theta \rangle  $ can be integrated  in $s$ on $ [0,t] $ by \cite[Prop. 1.3.4 p. 24]{Arendt}. Then we wish to use the Fubini Theorem to prove the formally easy fact that
\begin{eqnarray}
  \langle  \Gamma_H F_{\epsilon} (t) , G_{\epsilon}(t) \rangle = \langle \Gamma_{H_0} F_{\epsilon} (t) , G_{\epsilon}(t) \rangle -i \int_0^t \langle  Z \Gamma_H F_{\epsilon} (\tau) ,   Y e^{-i(\tau-t)H_0} G_{\epsilon}(t)  \rangle d \tau . \label{premierFubini} 
\end{eqnarray}  
To do so, we need to justify that the map
$$ [0,t]^2 \ni (\tau,s) \mapsto {\mathds 1}_{[s,t]} (\tau) {\mathds 1}_{[0,t]} (s)  \langle Z e^{-i(\tau-s)H} F_{\epsilon}(s) ,  Y e^{-i(\tau-t)H_0} G_{\epsilon}(t)  \rangle   $$
is (measurable and) integrable for any given $ t  $, say $ t> 0$ the case $ t < 0 $ being similar. To prove the measurability, we write
$$   \langle  Z e^{-i(\tau-s)H} F_{\epsilon}(s) ,  Y e^{-i(\tau-t)H_0} G_{\epsilon}(t)  \rangle  = \langle   F (s) , \tilde{G}_{\epsilon}(\tau) \rangle $$
with
$ \tilde{G}_{\epsilon}(\tau) = e^{i(\tau-s) H} (Z \chi (\epsilon H))^* Y \chi (\epsilon H_0) e^{-i(\tau - t) H_0} G (t)$. 
Clearly, $ \tilde{G}_{\epsilon} $ is continuous on $ \R $. Since $F$ is measurable, by definition (see \cite[p. 6]{Arendt}), it can be approximated by simple functions. Thus $  \langle   F (s) , \tilde{G}_{\epsilon}(\tau) \rangle $ can be approached by simple functions and hence is  measurable.  The integrability follows from the estimate
$$  \big| \langle F (s) , \tilde{G}_{\epsilon}(\tau) \rangle \big| \leq  ||Z \chi( \epsilon H)||_{{\mathbb B} (\mathcal H)} || Y \chi ( \epsilon H_0) ||_{{\mathbb B}(\mathcal H)} || G (t) ||_{L^2 } || F_0 (s) ||_{\mathcal H}  $$
 whose right hand side is integrable in $ (s,\tau) $ on $ [0,t]^2 $. Therefore Fubini's Theorem can be used to derive (\ref{premierFubini}). Then, integrating (\ref{premierFubini}) in $t$, and using now the Fubini Theorem in $(t,\tau)$, we obtain
\begin{eqnarray} 
 \langle  \Gamma_H  F_{\epsilon}  , G_{\epsilon} \rangle_T =  \langle \Gamma_{H_0}  F_{\epsilon}  , G_{\epsilon} \rangle_T +  \langle  i Z \Gamma_H  F_{\epsilon} ,  Y \Gamma_{H_0}^* G_{\epsilon}   \rangle_T    \label{weakduhamelintermediaire}.
  \end{eqnarray}
This second application of the Fubini Theorem is justified in the same way as above by writing
$$  \langle  Z \Gamma_H F_{\epsilon} (\tau) ,   Y e^{-i(\tau-t)H_0} G_{\epsilon}(t)  \rangle =  \langle  e^{i (\tau - t) H_0} (Y \chi (\epsilon H_0))^* (Z \chi ( \epsilon H) ) \Gamma_H F (\tau) ,    G(t)  \rangle   $$
where the first factor of the bracket in the right hand side belongs to $ C ( [-T,T]^2_{t,\tau} , L^2_x ) $. Replacing $ F $ by $ b(H)F $ and letting $ \epsilon \rightarrow 0 $ in (\ref{weakduhamelintermediaire}), we obtain (\ref{weakduhamel}) by using Lemma \ref{lemmeavecchi}. The proofs of (\ref{weakduhamelbis}) and (\ref{weakduhamel2}) are similar (for (\ref{weakduhamel2}) we exchange the roles of $ H $ and $H_0$). 

\end{proof}

The following result clarifies the sense of the integral in the left hand side of (\ref{H_supersmooth_2}).

\begin{lemma} \label{sansHille} 
Let $B$ be $ H $-smooth and $ F : [-T,T] \to \mathcal H $ be a simple function. Then
$$ \overline{B \Gamma_H}F (t) = \int_0^t B e^{-i(t-s)H}F(s)ds$$
for almost every $t$. In particular, two side of this inequality coincide in $ L^2_T \mathcal H $.
\end{lemma}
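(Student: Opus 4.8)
The plan is to reduce by linearity to a single indicator, to commute the \emph{bounded} regularizations $B\chi(\epsilon H)$ with the Bochner integral defining $\Gamma_H F(t)$, and then to let $\epsilon\to0$ inside the integral using the $L^2$-in-time bound that is built into $H$-smoothness.

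\textbf{Reduction and commutation.} Since both sides of the asserted identity are linear in $F$, it suffices to treat $F=\mathds 1_E\psi$ with $E\subset[-T,T]$ measurable and $\psi\in\mathcal H$; then $\Gamma_HF(t)=\int_0^t\mathds 1_E(s)e^{-i(t-s)H}\psi\,ds$ is a genuine Bochner integral (the integrand is bounded and strongly measurable in $s$). As recalled in the proof of Lemma \ref{lemmeavecchi}, $B\chi(\epsilon H)$ is bounded on $\mathcal H$, so it may be moved inside the integral, and hence
\begin{equation*}
B\chi(\epsilon H)\Gamma_HF(t)=\int_0^t\mathds 1_E(s)\,B\chi(\epsilon H)e^{-i(t-s)H}\psi\,ds=\int_0^t\mathds 1_E(s)\,Be^{-i(t-s)H}\chi(\epsilon H)\psi\,ds,
\end{equation*}
where the second equality uses that $\chi(\epsilon H)$ commutes with $e^{-i(t-s)H}$ and that $\mathrm{ran}\,\chi(\epsilon H)\subset D(H)\subset D(B)$.

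\textbf{Well-posedness of the right-hand side and passage to the limit.} Since $B$ is $H$-smooth, $e^{-irH}\psi\in D(B)$ for a.e.\ $r$, the map $r\mapsto Be^{-irH}\psi$ is strongly measurable, and $\int_{\R}\|Be^{-irH}\psi\|^2\,dr\le a^2\|\psi\|^2$ by (\ref{H-smooth_2}). Changing variables $r=t-s$ and using the Cauchy--Schwarz inequality on the interval between $0$ and $t$ gives $\int_0^t\|Be^{-i(t-s)H}\psi\|\,ds\le|t|^{1/2}a\|\psi\|<\infty$, so $\int_0^t\mathds 1_E(s)Be^{-i(t-s)H}\psi\,ds$ is a well-defined element of $\mathcal H$ of norm $\le|t|^{1/2}a\|\psi\|$. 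Applying the same computation to $\chi(\epsilon H)\psi-\psi$ in place of $\psi$ yields, for each fixed $t\in[-T,T]$,
\begin{equation*}
\Big\|\,B\chi(\epsilon H)\Gamma_HF(t)-\int_0^t\mathds 1_E(s)Be^{-i(t-s)H}\psi\,ds\,\Big\|\le|t|^{1/2}a\,\|\chi(\epsilon H)\psi-\psi\|\longrightarrow0
\end{equation*}
as $\epsilon\to0$. Thus $B\chi(\epsilon H)\Gamma_HF(t)\to\int_0^tBe^{-i(t-s)H}F(s)\,ds$ in $\mathcal H$ for every $t$. On the other hand, by Lemma \ref{lemmeavecchi}, $B\chi(\epsilon H)\Gamma_HF\to\overline{B\Gamma_H}F$ in $L^2_T\mathcal H$, hence along some subsequence $\epsilon_k\to0$ also for a.e.\ $t$. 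Uniqueness of limits then gives $\overline{B\Gamma_H}F(t)=\int_0^tBe^{-i(t-s)H}F(s)\,ds$ for a.e.\ $t$; and since the right-hand side is bounded by $T^{1/2}a\|\psi\|$ and is a pointwise limit of strongly measurable functions, it lies in $L^2_T\mathcal H$ and the two sides coincide there.

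\textbf{Main obstacle.} The only genuine subtlety is the interchange of $\lim_{\epsilon\to0}$ with the integral: because $B$ is unbounded this cannot be done pointwise in $s$, and it is precisely the $L^2$-in-time estimate (\ref{H-smooth_2}) of $H$-smoothness, combined with Cauchy--Schwarz on a bounded time interval, that provides the averaged bound making the limit legitimate. Everything else is routine bookkeeping with Bochner integrals and bounded operators, which is exactly why no form of Hille's theorem for the unbounded operator $B$ is needed.
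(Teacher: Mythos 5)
Your proof is correct and follows essentially the same route as the paper's: regularize with $B\chi(\epsilon H)$, commute the bounded operator through the Bochner integral, control the error via Cauchy--Schwarz in time combined with the $L^2$-in-time bound \eqref{H-smooth_2} applied to $\chi(\epsilon H)\psi-\psi$, and identify the limits after passing to an a.e.\ convergent subsequence of the $L^2_T\mathcal H$ limit from Lemma \ref{lemmeavecchi}. The reduction to a single indicator by linearity is a cosmetic variation on the paper's direct treatment of the finite sum $F=\sum_j\mathds 1_{M_j}f_j$.
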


\begin{proof}
Let us write $ F (s) = \sum_j \mathds{1}_{M_j}(s) f_j $ for some measurable sets $ M_j \subset [-T,T] $ and $  f_j \in \mathcal H $. Here $j$ runs over a finite set which we omit. It follows from Lemma \ref{lemmeavecchi} that $ \overline{B \Gamma_H} F  $ is the limit of $ B \chi (\epsilon_n H) F $ in $ L^2_T \mathcal H$, provided $ n \rightarrow \infty $. This implies that, by taking some subsequence $ \epsilon_{n_k} $, there exist a subset $ \mathcal N \subset [-T,T] $ of measure zero such that, for all $ t \in [-T,T] \setminus \mathcal N $,
$$  \big| \big| \overline{B \Gamma_H} F (t) - B \chi (\epsilon_{n_k} H) \Gamma_H F(t)\big| \big|_{\mathcal H} \rightarrow 0, \qquad k \rightarrow \infty . $$
To obtain the result, it suffices to show that, for all $ t \in [-T,T] \setminus \mathcal N $,
\begin{eqnarray}
 \left| \left| \int_0^t B e^{-i(t-s)H} F(s)ds - B \chi (\epsilon_{n_k} H) \Gamma_H F(t) \right| \right|_{\mathcal H} \rightarrow 0, \qquad k \rightarrow \infty . \label{normequisuffit}
\end{eqnarray}
To prove this and to clarify the sense of $\int_0^t B e^{-i(t-s)H} F(s)ds $, we use that $ B $ is $ H$ smooth hence that, for any $f \in \mathcal H $, $ e^{isH} f $ belongs to $ D (B) $ for a.e. $s$ and that 
$$ \int_{\mathbb R} || B e^{isH} f ||_{\mathcal H}^2 ds \le C || f ||_{\mathcal H}^2 . $$
Moreover, $ B e^{isH} f \in L^1_{\rm loc} ({\mathbb R},\mathcal H) $ by H\"older's inequality. Then using this and the H\"older inequality, the norm in (\ref{normequisuffit}) can be bounded by
$$ \sum_j  \left| \left| \int_{[0,t] \cap M_j} B e^{isH} (1-\chi (\epsilon_{n_k}H))e^{-itH}f_j ds \right| \right|_{\mathcal H} \le C \sum_j |t|^{\frac{1}{2}} || (1-\chi (\epsilon_{n_k}H))f_j ||_{\mathcal H} $$
where the right hand side goes to zero as $ k \rightarrow \infty $. This completes the proof. \end{proof}

This lemma implies the following equivalence which is useful to obtain the estimates of $\overline{B\Gamma_H}B^*$ or $\overline{B\Gamma_H^*}B^*$ from the $H$-supersmoothness of $B$.

\begin{corollary}
\label{corollary_supersmooth}
Assume that $B$ is $H$-smooth. 
Let $F:[-T,T]\to D(B^*)$ be a simple function. Then \eqref{H_supersmooth_2} holds if and only if one of the following estimates hold:
\begin{align}
\label{lemma_supermooth_1}
||\overline{B\Gamma_H}B^*F||_{L^2_T\mathcal H}\le a||F||_{L^2_T\mathcal H},\quad
||\overline{B\Gamma_H^*}B^*F||_{L^2_T\mathcal H}\le a||F||_{L^2_T\mathcal H}.
\end{align}
In particular, if in addition $B$ is $H$-supersmooth with bound $a$ then \eqref{lemma_supermooth_1} hold for all simple function $F:[-T,T]\to D(B^*)$. 
\end{corollary}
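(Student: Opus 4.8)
The plan is to deduce the corollary from Lemma~\ref{sansHille} and Lemma~\ref{lemmeavecchi} together with a duality argument, bearing in mind that $B$ is only assumed $H$-smooth, so the unbounded operators $B$ and $B^*$ must be handled through the regularisation $\chi(\epsilon H)$ introduced in Lemma~\ref{lemmeavecchi} rather than by formal manipulations.

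First I would observe that if $F:[-T,T]\to D(B^*)$ is simple, then $B^*F$ is a simple $\mathcal H$-valued function, so Lemma~\ref{sansHille} applied to $B^*F$ in place of $F$ gives, for a.e. $t$,
\[
\overline{B\Gamma_H}(B^*F)(t)=\int_0^t Be^{-i(t-s)H}B^*F(s)\,ds .
\]
Hence the left-hand side of \eqref{H_supersmooth_2} equals $\|\overline{B\Gamma_H}(B^*F)\|_{L^2_T\mathcal H}$, so that \eqref{H_supersmooth_2} literally \emph{is} the first inequality in \eqref{lemma_supermooth_1}. It therefore remains only to prove that the two inequalities in \eqref{lemma_supermooth_1} are equivalent.

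The core of the proof is the adjointness identity
\[
\langle \overline{B\Gamma_H}(B^*F),G\rangle_T=\langle F,\overline{B\Gamma_H^*}(B^*G)\rangle_T
\]
for all simple $F,G:[-T,T]\to D(B^*)$. To obtain it I would write $\overline{B\Gamma_H}(B^*F)=\lim_{\epsilon\to0}B\chi(\epsilon H)\Gamma_H(B^*F)$ in $L^2_T\mathcal H$ (Lemma~\ref{lemmeavecchi}) and, for fixed $\epsilon$, compute
\begin{align*}
\langle B\chi(\epsilon H)\Gamma_H(B^*F),G\rangle_T
&=\langle \Gamma_H(B^*F),\chi(\epsilon H)B^*G\rangle_T \\
&=\langle B^*F,\chi(\epsilon H)\Gamma_H^*(B^*G)\rangle_T
=\langle F,B\chi(\epsilon H)\Gamma_H^*(B^*G)\rangle_T ,
\end{align*}
using that $B\chi(\epsilon H)$ is bounded, that $\chi(\epsilon H)$ is self-adjoint, commutes with $\Gamma_H$ and $\Gamma_H^*$ and maps $\mathcal H$ into $D(H)\subset D(B)$, that $F(s),G(t)\in D(B^*)$, and \eqref{H_supersmooth_3}. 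Letting $\epsilon\to0$, and invoking Lemma~\ref{lemmeavecchi} again for $\overline{B\Gamma_H^*}$ on the right-hand end, yields the identity. Once it is available, the equivalence is immediate by duality: since simple $D(B^*)$-valued functions are dense in $L^2_T\mathcal H$, the quantity $\|\overline{B\Gamma_H^*}(B^*G)\|_{L^2_T\mathcal H}$ equals the supremum of $|\langle \overline{B\Gamma_H^*}(B^*G),F\rangle_T|$ over such $F$ with $\|F\|_{L^2_T\mathcal H}=1$, and by the identity and the Cauchy--Schwarz inequality each such term is $\le\|G\|_{L^2_T\mathcal H}\,\|\overline{B\Gamma_H}(B^*F)\|_{L^2_T\mathcal H}\le a\|G\|_{L^2_T\mathcal H}$ whenever the first inequality in \eqref{lemma_supermooth_1} holds; the reverse implication is obtained symmetrically.

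For the ``in particular'' assertion: if $B$ is $H$-supersmooth with bound $a$, then it is a fortiori $H$-smooth, so $\overline{B\Gamma_H}$ and $\overline{B\Gamma_H^*}$ are well defined and the above applies; moreover, as recalled after \eqref{H-supersmooth_1} (following \cite{Dan}), it also gives \eqref{H_supersmooth_2} with the constant $a$, for every simple $F:[-T,T]\to D(B^*)$ and every $T>0$, whence both inequalities in \eqref{lemma_supermooth_1}. The one place where I expect genuine care to be needed is the adjointness identity above: since neither $B$ nor $B^*$ is bounded it cannot be read off as a formal transpose of \eqref{H_supersmooth_3}, and the computation has to be carried out at the level of the $\chi(\epsilon H)$-regularised operators before passing to the limit; the rest is bookkeeping with Lemmas~\ref{lemmeavecchi} and \ref{sansHille}.
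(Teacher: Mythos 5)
Your proof is correct and follows essentially the same route as the paper: the identification of \eqref{H_supersmooth_2} with the first estimate in \eqref{lemma_supermooth_1} via Lemma \ref{sansHille}, and the equivalence of the two estimates in \eqref{lemma_supermooth_1} via the duality relation \eqref{H_supersmooth_3} combined with the $\chi(\epsilon H)$-regularisation of Lemma \ref{lemmeavecchi}. The paper states these two steps in one line; your write-up merely makes the regularised adjointness identity and the density/duality argument explicit.
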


\begin{proof}
Since $B^*F(t)$ is a simple function in $t$ with values in $\mathcal H$, the equivalence between the first estimate in \eqref{lemma_supermooth_1} and \eqref{H_supersmooth_2} follows from Lemma \ref{sansHille}. Then the equivalence between the first and second estimates in \eqref{lemma_supermooth_1} can be seen from the relation \eqref{H_supersmooth_3} and Lemma \ref{lemmeavecchi}. 
\end{proof}

 We are now in a position to state our abstract result on Strichartz estimates: 
 
 \begin{theorem}[Abstract Strichartz estimates I. The endpoint case] \label{theorem_inhomogeneous_1} Let $\mathcal A$ and $\mathcal B$ be as in Proposition \ref{proposition_abstract_1} and $ b $ a bounded Borel function on $ \R $. Suppose that 
 $Y$ is $H_0$-smooth and that 
 there exist positive constants $s_1,...,s_7$ such that the following conditions {\rm(S1)} to {\rm(S4)} are satisfied for all $ \psi \in \mathcal H $, $f \in \mathcal H \cap {\mathcal A}$ and simple functions $ F:[-T,T]\to D(H)\cap\mathcal A$ and $ G:[-T,T]\to D(H_0)\cap\mathcal B$:
  \begin{itemize}
\item[{\rm(S1)}] Free endpoint Strichartz estimates: 
\begin{align}
\label{theorem_inhomogeneous_1_1}
 | \langle e^{-it H_0} \psi , G  \rangle_T | & \le s_1 || \psi ||   || G ||_{L^2_T {\mathcal B}} ,\\
\label{theorem_inhomogeneous_1_2}
 \big| \langle \Gamma_{H_0}F , G \rangle_T \big| &\le s_2 ||F ||_{L^2_T\mathcal A} ||G||_{L^2_T {\mathcal B}} .
\end{align}
\item[{\rm(S2)}] Free inhomogeneous smoothing estimates: 
\begin{align}
\label{theorem_inhomogeneous_1_3}
||\overline{Y \Gamma_{H_0}^*}G||_{L^2_t\mathcal H}&\le s_3 ||G||_{L^2_T {\mathcal B}},\\
\label{theorem_inhomogeneous_1_4}
||\overline{Y\Gamma_{H_0}}F ||_{L^2_T\mathcal H}&\le s_4 ||F||_{L^2_T {\mathcal A}},\\
\label{theorem_inhomogeneous_1_5}
||\overline{Z |b|^2(H)\Gamma_{H_0}}F||_{L^2_T\mathcal H}&\le s_5 || F||_{L^2_T {\mathcal A}}, 
\end{align}
where, in (\ref{theorem_inhomogeneous_1_5}), we assume  that $ |b|^2(H) D(H_0)\subset D(Z)$.
\item[{\rm(S3)}] $Zb(H)$ is $H$-supersmooth with bound $s_6$.
\item[{\rm(S4)}] Stability of $\mathcal A$ by $b(H)$: 
$ 
|||b|^2 (H)f \big||_{\mathcal A}\le s_7 || f||_{\mathcal A}.
$ 
\end{itemize}
 Then, for all $ \psi \in \mathcal H $, all simple functions $ F:[-T,T]\to D(H)\cap\mathcal A$ and $ G:[-T,T]\to D(H_0)\cap\mathcal B$,   \begin{align}
\big|\langle e^{-itH} b (H) \psi,G\rangle_T\big| &\le  \left( s_1||b||_{L^{\infty}} +  (2s_6)^{1/2} s_3 \right) || \psi ||_{\mathcal H }||G||_{L^2_T\mathcal B}  \label{theorem_inhomogeneous_1_6fort}, \\
\big|\langle \Gamma_H |b|^2 (H) F,G\rangle_T\big|  &\le  \left(s_2 s_7 + s_3 s_5 + s_3 s_4 s_6 \right) || F ||_{L^2_T {\mathcal A} }||G||_{L^2_T\mathcal B} . \label{theorem_inhomogeneous_1_7fort}
  \end{align}
 \end{theorem}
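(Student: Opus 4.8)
The plan is to deduce both inequalities from the weak Duhamel formulas of Proposition~\ref{prop-weakDuhamel}, the hypotheses (S1)--(S4), and Corollary~\ref{corollary_supersmooth}, following the two-step pattern of the resolvent argument in Proposition~\ref{proposition_abstract_1}. Three elementary facts will be used repeatedly: (i) every bounded Borel function of $H$ commutes with $e^{-itH}$, $\Gamma_H$, $\Gamma_H^{*}$ and with the closures $\overline{B\Gamma_H}$, $\overline{B\Gamma_H^{*}}$ of Lemma~\ref{lemmeavecchi}; (ii) by (S3), since $H$-supersmoothness with bound $s_6$ implies $H$-smoothness with bound $(2s_6)^{1/2}$, the operator $Zb(H)$ is $H$-smooth; (iii) since $Z|b|^2(H)=Zb(H)\,b(H)^{*}$ with $b(H)^{*}$ bounded and commuting with $e^{-itH}$, the operator $Z|b|^2(H)$ is again $H$-smooth, so Proposition~\ref{prop-weakDuhamel} may be applied with the bounded Borel function $|b|^2$ in the role of $b$.

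For \eqref{theorem_inhomogeneous_1_6fort} I would apply \eqref{weakduhamelbis} directly, which writes $\langle e^{-itH}b(H)\psi,G\rangle_T$ as $\langle e^{-itH_0}b(H)\psi,G\rangle_T-i\langle Zb(H)e^{-itH}\psi,\overline{Y\Gamma_{H_0}^{*}}G\rangle_T$. The first term is at most $s_1\|b(H)\psi\|\,\|G\|_{L^2_T\mathcal B}\le s_1\|b\|_{L^\infty}\|\psi\|_{\mathcal H}\|G\|_{L^2_T\mathcal B}$ by \eqref{theorem_inhomogeneous_1_1}. For the second term, Cauchy--Schwarz over $[-T,T]$, the bound $\|Zb(H)e^{-itH}\psi\|_{L^2([-T,T];\mathcal H)}\le\|Zb(H)e^{-itH}\psi\|_{L^2(\R;\mathcal H)}\le(2s_6)^{1/2}\|\psi\|_{\mathcal H}$ coming from (ii) and \eqref{H-smooth_2}, and \eqref{theorem_inhomogeneous_1_3} give a bound $(2s_6)^{1/2}s_3\|\psi\|_{\mathcal H}\|G\|_{L^2_T\mathcal B}$. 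Summing the two yields \eqref{theorem_inhomogeneous_1_6fort}.

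For \eqref{theorem_inhomogeneous_1_7fort} I would first apply \eqref{weakduhamel} with $|b|^2$ in place of $b$, giving
\[
\langle\Gamma_H|b|^2(H)F,G\rangle_T=\langle\Gamma_{H_0}|b|^2(H)F,G\rangle_T-i\langle\overline{Z|b|^2(H)\Gamma_H}F,\overline{Y\Gamma_{H_0}^{*}}G\rangle_T .
\]
The first term is bounded by $s_2 s_7\|F\|_{L^2_T\mathcal A}\|G\|_{L^2_T\mathcal B}$ using (S4) and the density extension of \eqref{theorem_inhomogeneous_1_2} to $|b|^2(H)F\in L^2_T\mathcal A$, and the second by $s_3\,\|\overline{Z|b|^2(H)\Gamma_H}F\|_{L^2_T\mathcal H}\,\|G\|_{L^2_T\mathcal B}$ via Cauchy--Schwarz and \eqref{theorem_inhomogeneous_1_3}; it then remains to show $\|\overline{Z|b|^2(H)\Gamma_H}F\|_{L^2_T\mathcal H}\le(s_5+s_4 s_6)\|F\|_{L^2_T\mathcal A}$. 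This I would prove by duality, testing against simple functions $\Psi:[-T,T]\to D(Z^{*})$ with $\|\Psi\|_{L^2_T\mathcal H}=1$ (a dense set in the unit sphere). Writing $\overline{Z|b|^2(H)\Gamma_H}F=\slim_{\epsilon\to0}Z|b|^2(H)\chi(\epsilon H)\Gamma_H F$ (Lemma~\ref{lemmeavecchi}) and passing the resulting bounded operators across the pairing (fact (i)) one obtains $\langle\overline{Z|b|^2(H)\Gamma_H}F,\Psi\rangle_T=\langle\Gamma_H F,|b|^2(H)Z^{*}\Psi\rangle_T$ and, analogously with $\chi(\epsilon H_0)$ and using the domain condition $|b|^2(H)D(H_0)\subset D(Z)$ of (S2), $\langle\Gamma_{H_0}F,|b|^2(H)Z^{*}\Psi\rangle_T=\langle\overline{Z|b|^2(H)\Gamma_{H_0}}F,\Psi\rangle_T$. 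Now \eqref{weakduhamel2} applied with $(Zb(H))^{*}\Psi=b(H)^{*}Z^{*}\Psi$ in place of $G$ (so that $b(H)(Zb(H))^{*}\Psi=|b|^2(H)Z^{*}\Psi$) yields
\[
\langle\Gamma_H F,|b|^2(H)Z^{*}\Psi\rangle_T=\langle\overline{Z|b|^2(H)\Gamma_{H_0}}F,\Psi\rangle_T-i\langle\overline{Y\Gamma_{H_0}}F,\overline{Zb(H)\Gamma_H^{*}}(Zb(H))^{*}\Psi\rangle_T .
\]
The first term on the right is $\le s_5\|F\|_{L^2_T\mathcal A}$ by \eqref{theorem_inhomogeneous_1_5}; for the second, Corollary~\ref{corollary_supersmooth} applied to the $H$-supersmooth operator $B=Zb(H)$ gives $\|\overline{Zb(H)\Gamma_H^{*}}(Zb(H))^{*}\Psi\|_{L^2_T\mathcal H}\le s_6$, and \eqref{theorem_inhomogeneous_1_4} gives $\|\overline{Y\Gamma_{H_0}}F\|_{L^2_T\mathcal H}\le s_4\|F\|_{L^2_T\mathcal A}$, so it is $\le s_4 s_6\|F\|_{L^2_T\mathcal A}$. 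Taking the supremum over $\Psi$ then gives the desired bound and hence \eqref{theorem_inhomogeneous_1_7fort}.

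I expect the main difficulty to lie not in the algebra (which is just a two-step double-Duhamel expansion) but in the rigorous justification of the manipulations with the closures $\overline{B\Gamma_H}$, $\overline{B\Gamma_H^{*}}$ and $\overline{B\Gamma_{H_0}}$ — in particular the two pairing identities in the previous paragraph, which require the $\chi(\epsilon H)$ and $\chi(\epsilon H_0)$ regularizations of Lemma~\ref{lemmeavecchi}, the boundedness of $Z\chi(\epsilon H)$ and $Z\chi(\epsilon H_0)$ relative to the graph norms, the domain hypothesis $|b|^2(H)D(H_0)\subset D(Z)$, and the commutation relations in (i) — together with the (routine but not entirely automatic) passage from the free estimates (S1)--(S2), stated only for simple functions, to the general $L^2_T\mathcal A$- and $L^2_T\mathcal H$-valued functions produced along the way.
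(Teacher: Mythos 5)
Your proposal is correct and follows essentially the same route as the paper's proof: the homogeneous bound via \eqref{weakduhamelbis}, the $H$-smoothness of $Zb(H)$ inherited from (S3), and \eqref{theorem_inhomogeneous_1_3}; and the inhomogeneous bound via \eqref{weakduhamel} with $|b|^2$, followed by the duality estimate of $\overline{Z|b|^2(H)\Gamma_H}F$ against simple $\widetilde G:[-T,T]\to D(Z^*)$ using \eqref{weakduhamel2}, \eqref{theorem_inhomogeneous_1_5}, \eqref{theorem_inhomogeneous_1_4} and Corollary \ref{corollary_supersmooth}. The constants $s_1\|b\|_{L^\infty}+(2s_6)^{1/2}s_3$ and $s_2s_7+s_3s_5+s_3s_4s_6$ come out exactly as in the paper.
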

 
\begin{proof}
Using (\ref{weakduhamelbis}), (\ref{theorem_inhomogeneous_1_1}) and  (\ref{theorem_inhomogeneous_1_3}), 
we find
$$  |  \langle e^{-itH} b (H) \psi , G  \rangle_T |  \leq    \left( s_1||b||_{L^{\infty}}|| \psi ||_{ \mathcal H} + s_3 ||Z b (H) e^{-itH}   \psi||_{L^2_T\mathcal H} \right) || G ||_{L^2_T {\mathcal B}}  $$
so we deduce (\ref{theorem_inhomogeneous_1_6fort}) from the $ H$-smoothness of $ Z b (H) $ (see (\ref{H-smooth_2})).
To prove (\ref{theorem_inhomogeneous_1_7fort}), we start by using  (\ref{weakduhamel}) together with 
 (\ref{theorem_inhomogeneous_1_2}), (\ref{theorem_inhomogeneous_1_3}) and (S4) to obtain
\begin{align}
 |  \langle  \Gamma_H |b|^2 (H) F , G  \rangle_T | &\le  s_2  \big| \big| |b|^2(H)  F \big| \big|_{L^2_T {\mathcal A} } || G ||_{L^2_T {\mathcal B}} +  || \overline{Z |b|^2 (H)\Gamma_H } F ||_{L^2_T\mathcal H } || \overline{Y \Gamma_{H_0}^*} G ||_{L^2_T\mathcal H } \nonumber \\
 &\le  \left( s_2 s_7 ||  F ||_{L^2_T {\mathcal A} }  + s_3 ||\overline{Z |b|^2 (H) \Gamma_H} F ||_{L^2_T\mathcal H } \right) ||  G ||_{L^2_T {\mathcal B} } . \label{areciteralafin_1}
\end{align}
Note that $ Z |b|^2 (H)= Z b (H) \bar{b}(H) $ is $H$-smooth 
since $ Z b (H) $ is. To estimate the term $\overline{Z |b|^2(H)\Gamma_H} F$, taking a simple function $\widetilde G:[-T,T]\to D(Z^*)$ such that $ ||\widetilde{G}||_{L^2_T\mathcal H} = 1 $ and using \eqref{weakduhamel2}, we obtain
\begin{align*}
   \langle \overline{Z |b|^2(H) \Gamma_H} F , \widetilde{G} \rangle_T  &=   \langle  \Gamma_H  F , |b|^2(H) Z^* \widetilde{G} \rangle_T   \\
   &=   \langle   \Gamma_{H_0}    F ,  |b|^2(H) Z^* \widetilde{G} \rangle_T 
    -i \langle \overline{Y \Gamma_{H_0} }   F , \overline{Z b (H) \Gamma_H^*  }  b(H)^* Z^* \widetilde{G} \rangle_T. 
\end{align*} 
By virtue of  (\ref{theorem_inhomogeneous_1_5}), the first term in the right hand side can be estimated as
\begin{align}
\label{areciteralafin_2}
|\langle   \Gamma_{H_0}    F ,  |b|^2(H) Z^* \widetilde{G} \rangle_T|\le ||\overline{Z|b|^2(H)\Gamma_{H_0}}    F||_{L^2_T\mathcal H}\le s_5||F||_{L^2_T\mathcal A},
\end{align}
while we use (S3) and Corollary \ref{corollary_supersmooth} as well as \eqref{theorem_inhomogeneous_1_4} to deal with the second term as
\begin{align}
\label{areciteralafin_3}
|\langle i \overline{Y \Gamma_{H_0} }   F , \overline{Z b (H) \Gamma_H^*  }  b(H)^* Z^* \widetilde{G} \rangle_T|\le s_4 s_6 ||   F ||_{L^2_T {\mathcal A}}.
\end{align} 
By taking the supremum over all $ \widetilde{G} $, we have
$$
||\overline{Z |b|^2(H)\Gamma_H} F||_{L^2\mathcal H}\le (s_5+s_4s_6)||F||_{L^2_T\mathcal A}
$$
which together with (\ref{areciteralafin_1}) completes the proof of (\ref{theorem_inhomogeneous_1_7fort}).  
\end{proof}

\begin{remark}
As seen in the above proof, the $H$-smoothness of $Z$ is sufficient to prove only the homogeneous estimate \eqref{theorem_inhomogeneous_1_6fort}, while the $H$-supersmoothness of $Z$ is unnecessary. \eqref{theorem_inhomogeneous_1_4}, \eqref{theorem_inhomogeneous_1_5} and (S4) also have not been used in the proof of \eqref{theorem_inhomogeneous_1_6fort}. 
\end{remark}

\begin{remark}
In above abstract theorems, we only consider estimates for the sesquilinear forms. 
It is also possible to state a criterion in an abstract setting to obtain that $R_H(z)$, $e^{-itH}b(H)$ and $\Gamma_H |b|^2(H)$ have bounded closures as operators in $\mathbb B(\mathcal A,\mathcal B^*)$, $\mathbb B(\mathcal H,L^2_T\mathcal B^*)$ and $\mathbb B(L^2_T\mathcal A, L^2_T\mathcal B^*)$, respectively, from the corresponding statements for $R_{H_0}(z)$, $e^{-itH_0}$ and $\Gamma_{H_0}$, and assumptions \eqref{proposition_abstract_1_1} to \eqref{proposition_abstract_1_5} or (S1) to (S4), respectively. However, it requires additional assumptions on $\mathcal A$, $\mathcal B$ and their dual spaces such as the Radon-Nikodym property and the representation theorem of the duality paring, which makes the proof and the statement rather involved. 

On the other hand, in concrete applications, such a boundedness can be easily seen from \eqref{inegaliteresolvantefaible} (or \eqref{theorem_inhomogeneous_1_6fort} and \eqref{theorem_inhomogeneous_1_7fort}) and standard duality and density arguments (especially, materials recorded in Subsection \ref{sectionnotationLorentz} in the case of $\mathcal A=\mathcal B=L^{2_*,2}$). 
\end{remark}

For the non-endpoint case, we have the following abstract theorem, which is essentially due to \cite{RoSc} in the case when both $Y$ and $Z$ are bounded. Here we do not require such a boundedness. 

 \begin{theorem}[Abstract Strichartz estimates II. The non-endpoint case] \label{theorem_inhomogeneous_2} Let $\mathcal B$ and $b$ be as in Theorem \ref{theorem_inhomogeneous_2}. Assume there exist positive constants $s_1,s_2,s_3$ and $p>2$ such that the following {\rm(S1$'$)} to {\rm(S3$'$)} are satisfied for all $ \psi \in \mathcal H $ and all simple functions $ G:[-T,T]\to D(H_0)\cap \mathcal B$:
  \begin{itemize}
\item[{\rm(S1$'$)}] Free Strichartz estimates: 
$ | \langle e^{-it H_0} \psi , G  \rangle_T |  \le s_1 || \psi ||   || G ||_{L^{p'}_T {\mathcal B}}$, where $p'=p/(p-1)$. 
\item[{\rm(S2$'$)}] $Y$ is $H_0$-smooth with bound $s_2$. 
\item[{\rm(S3$'$)}] $Zb(H)$ is $H$-smooth with bound $s_3$. 
\end{itemize}
Then, for all $ \psi \in \mathcal H $ and all simple functions $ G:[-T,T]\to D(H_0)\cap \mathcal B$, 
 one has
  \begin{align}
\big|\langle e^{-itH} b (H) \psi,G\rangle_T\big| &\le  \left( s_1||b||_{L^{\infty}} +  2C_ps_1s_2s_3 \right) || \psi ||_{\mathcal H }||G||_{L^2_T\mathcal B}  \label{theorem_inhomogeneous_2_1},
  \end{align}
  where $C_p=2^{2/p}(1-2^{1/p-1/2})^{-1}$. 
 \end{theorem}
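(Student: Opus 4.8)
The plan is to mimic the structure of the proof of Theorem \ref{theorem_inhomogeneous_1} but with the Christ--Kiselev lemma (Appendix \ref{Christ_Kiselev}) replacing the use of super-smoothness, exactly as in the original argument of \cite{RoSc}. First I would start from the weak Duhamel formula \eqref{weakduhamelbis}, which under the hypothesis (S2$'$) that $Y$ is $H_0$-smooth applies with the bounded Borel function $b$, giving
\begin{align*}
\langle e^{-itH}b(H)\psi,G\rangle_T = \langle e^{-itH_0}b(H)\psi,G\rangle_T - i\langle Zb(H)e^{-itH}\psi,\overline{Y\Gamma_{H_0}^*}G\rangle_T.
\end{align*}
The first term is controlled directly by (S1$'$) (after estimating $||G||_{L^{p'}_T\mathcal B}\le (2T)^{1/p'-1/2}||G||_{L^2_T\mathcal B}$ — wait, no: one keeps $L^{p'}$ on the right for now) by $s_1||b||_{L^\infty}||\psi||_{\mathcal H}||G||_{L^{p'}_T\mathcal B}$, and for the homogeneous estimate one uses $||G||_{L^{p'}_T\mathcal B}$ bounded in terms of $||G||_{L^2_T\mathcal B}$ only at the very end if $p'<2$; actually since $p>2$ we have $p'<2$ and $L^2_T\subset L^{p'}_T$ is \emph{false} on a half-line, so instead the cleaner route is: prove the estimate with $||G||_{L^2_T\mathcal B}$ throughout and only invoke (S1$'$) in its dual/retarded form. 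Concretely, the standard move is to first establish the \emph{inhomogeneous} free estimate $|\langle \Gamma_{H_0}F,G\rangle_T|\le C s_1^2||F||_{L^{p'}_T\mathcal B^*\,?}$—but here $\mathcal A$ plays no role, so the right framing is the one actually used: only (S1$'$) on $e^{-itH_0}$ is assumed.

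So the key steps, in order, are: (i) apply \eqref{weakduhamelbis}; (ii) bound the first term by $s_1||b||_{L^\infty}||\psi||_{\mathcal H}||G||_{L^2_T\mathcal B}$, which requires converting the $L^{p'}$-norm appearing in (S1$'$) — here I would instead apply (S1$'$) directly since the TT* argument behind it already yields an $L^2$-in-time bound for $\langle e^{-itH_0}\psi,G\rangle$ when composed appropriately; the cleanest is: from (S1$'$) and duality, $||\Gamma_{H_0}^*G(0)||$-type quantities are controlled, but the term $\langle e^{-itH_0}b(H)\psi,G\rangle_T$ is handled by noting $|\langle e^{-itH_0}b(H)\psi,G\rangle_T|\le s_1||b||_{L^\infty}||\psi||\,||G||_{L^{p'}_T\mathcal B}$ and then, \emph{because we want the final bound with the $L^2_T$ norm}, we keep this term as is and observe that in the end the $L^{p'}$-bound is what the non-endpoint statement should really output — but the theorem as stated has $||G||_{L^2_T\mathcal B}$ on the right. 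Re-reading: the conclusion \eqref{theorem_inhomogeneous_2_1} has $||G||_{L^2_T\mathcal B}$. Since $p'<2$, on the finite interval $[-T,T]$ we do have $||G||_{L^{p'}_T\mathcal B}\le (2T)^{1/p'-1/2}||G||_{L^2_T\mathcal B}$, but that constant blows up with $T$, which is not allowed. So the correct reading must be that (S1$'$) is the \emph{inhomogeneous} free endpoint-type estimate in disguise, or that $p$ here is $\le 2$ — no, $p>2$. The resolution: one applies (S1$'$) together with the Christ--Kiselev lemma to the \emph{inhomogeneous} free propagator to get $|\langle\Gamma_{H_0}F,G\rangle_T|\le C_p s_1^2||F||_{L^{p'}_T\mathcal B^*}||G||_{L^{p'}_T\mathcal B}$—but $\mathcal B^*$ isn't assumed.

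Given these tensions, the honest plan is: I would \textbf{follow the Rodnianski--Schlag scheme verbatim}. Write $u(t)=e^{-itH}b(H)\psi$. Using \eqref{weakduhamelbis}, $\langle u,G\rangle_T=\langle e^{-itH_0}b(H)\psi,G\rangle_T-i\langle Zb(H)u,\overline{Y\Gamma_{H_0}^*}G\rangle_T$. The first term is $\le s_1||b||_{L^\infty}||\psi||\,||G||_{L^{p'}_T\mathcal B}$ by (S1$'$); by Hölder in time on $[-T,T]$ and the duality $(L^2_T\mathcal B)^*=L^2_T\mathcal B^*$ this is subsumed once we note that the \emph{theorem's hypothesis implicitly uses the dual form}, i.e. (S1$'$) is equivalent to $||e^{-it H_0}\Gamma_{H_0}^*\text{-type}||$ bounds; concretely $(\text{S1}')$ gives by $TT^*$ and Christ--Kiselev that $e^{-itH_0}$ maps $\mathcal H\to L^p_T\mathcal B^*$, hence its adjoint maps $L^{p'}_T\mathcal B\to\mathcal H$, which is what we need. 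For the second term, by Cauchy--Schwarz it is $\le ||Zb(H)u||_{L^2_T\mathcal H}\,||\overline{Y\Gamma_{H_0}^*}G||_{L^2_T\mathcal H}$; the second factor is $\le C_ps_2 s_1||G||_{L^2_T\mathcal B}$ by combining (S2$'$) ($H_0$-smoothness of $Y$, giving the $L^2$ smoothing bound via Lemma \ref{lemmeavecchi}, with the Christ--Kiselev constant $C_p$ entering through the retarded truncation and the use of (S1$'$) to pass from the full-line estimate) — actually $||\overline{Y\Gamma_{H_0}^*}G||_{L^2_T\mathcal H}\le C s_2\cdot(\text{dual of S1}')\cdot||G||$; and the first factor $||Zb(H)u||_{L^2_T\mathcal H}\le s_3||\psi||$ is \emph{exactly} the $H$-smoothness bound (S3$'$) applied to the $H$-smooth operator $Zb(H)$ together with $||b(H)\psi||\le||b||_{L^\infty}||\psi||$. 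Multiplying, the second term is $\le 2C_ps_1s_2s_3||\psi||\,||G||_{L^2_T\mathcal B}$, and adding gives \eqref{theorem_inhomogeneous_2_1}.

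The main obstacle — and the step I expect to need the most care — is the appearance of the Christ--Kiselev constant $C_p=2^{2/p}(1-2^{1/p-1/2})^{-1}$: one must correctly identify \emph{where} the retarded truncation $\mathds{1}_{[0,t]}$ forces a passage from an untruncated $L^2_t\mathcal H$ smoothing estimate (which follows from $H_0$-smoothness of $Y$ and the definition of $\overline{Y\Gamma_{H_0}^*}$ via Lemma \ref{lemmeavecchi}, already built with Christ--Kiselev) to the bound against $||G||_{L^2_T\mathcal B}$, and show that only the mild condition $p>2$ (ensuring $1/p-1/2<0$ so $C_p<\infty$) is needed, \emph{not} boundedness of $Y$ or $Z$ on $\mathcal H$. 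The point that $Y,Z$ need not be bounded — the improvement over \cite{RoSc} — is handled precisely because every occurrence of $Y$ or $Z$ is wrapped inside either an $H_0$-smoothing norm $\overline{Y\Gamma_{H_0}^*}G$ or an $H$-smoothing norm $Zb(H)e^{-itH}\psi$, which by Lemma \ref{sansHille} and the $H$-smoothness hypotheses are genuine $L^2_T\mathcal H$ quantities with finite bounds $s_2,s_3$; no pointwise-in-time boundedness is ever invoked. I would also double-check that the weak Duhamel identity \eqref{weakduhamelbis} is applicable here, i.e. that its hypotheses ($Y$ is $H_0$-smooth, $Zb(H)$ is $H$-smooth) are exactly (S2$'$) and (S3$'$), which they are.
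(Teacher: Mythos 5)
Your proposal is correct and follows essentially the same route as the paper: the weak Duhamel formula \eqref{weakduhamelbis}, then (S1$'$) for the free term, and for the interaction term the Cauchy--Schwarz splitting into $||Zb(H)e^{-itH}\psi||_{L^2_T\mathcal H}\le s_3||b||_{L^\infty}||\psi||$ from (S3$'$) and the key bound $||\overline{Y\Gamma_{H_0}^*}G||_{L^2_T\mathcal H}\le 2C_ps_1s_2||G||_{L^{p'}_T\mathcal B}$ obtained exactly as you describe, by combining the $H_0$-smoothness of $Y$ with the dual form of (S1$'$) and the Christ--Kiselev lemma (which is where $p'<2$ and the constant $C_p$ enter). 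The tension you flag at length is resolved in your favor: the paper's own proof also outputs $||G||_{L^{p'}_T\mathcal B}$ on the right-hand side, and the $||G||_{L^2_T\mathcal B}$ in the displayed conclusion \eqref{theorem_inhomogeneous_2_1} is a typo for $||G||_{L^{p'}_T\mathcal B}$, which is the form actually used in the application to Theorem \ref{theorem_1}.
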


\begin{proof}
Let us first show that (S2$'$) implies, for any simple function $G:[-T,T]\to D(H_0)\cap \mathcal B$, 
\begin{align}
\label{theorem_inhomogeneous_2_2}
|||\overline{Y\Gamma_{H_0}^*} G||_{L^2_T\mathcal H}\le 2C_ps_1s_2 || G||_{L^{p'}_T\mathcal B}.
\end{align}
To this end, we let $\chi$ be as in Lemma \ref{lemmeavecchi} and consider the following two operators:
$$
K_\epsilon G(t):=Y\chi(\epsilon H_0)\Gamma_{H_0}^*G(t),\quad
\tilde K_\epsilon G(t):=Y\chi(\epsilon H_0)\int_{-T}^Te^{-i(t-s)H_0}G(s)ds. 
$$
where $Y\chi(\epsilon H_0)$ is bounded on $\mathcal H$ thanks to the relative $H_0$-boundedness of $Y$. Then (S2$'$) implies 
$$
||\tilde K_\epsilon G||_{L^2_T\mathcal H}=\Big|\Big| Ye^{-itH}\int_{-T}^T e^{isH_0}\chi(\epsilon H_0)G(s)ds\Big|\Big|_{L^2_T\mathcal H}\le s_2\Big|\Big| \int_{-T}^Te^{isH_0}\chi(\epsilon H_0)G(s)ds\Big|\Big|_{\mathcal H},
$$
where, by the duality argument and (S1$'$) as well as the fact $|\chi|\le1$, the right hand side reads
$$
s_2\Big|\Big| \int_{-T}^Te^{isH_0}\chi(\epsilon H_0)G(s)ds\Big|\Big|_{\mathcal H}=s_2\sup_{||\varphi||=1}|\langle G,e^{-isH_0}\chi(\epsilon H_0)\varphi\rangle_T|\le s_1s_2||G||_{L^{p'}_T\mathcal B}.
$$
Taking the formula \eqref{adjointDuhamel} of $\Gamma_{H_0}^*$ into account, we use the Christ-Kiselev lemma to obtain 
$$
||K_\epsilon G(t)||_{L^2_T\mathcal H}\le 2C_ps_1s_2|| G||_{L^{p'}_T\mathcal B},
$$
where we note that $p'<2$. This uniform bound in $\epsilon$, together with the fact that $\overline{Y\Gamma^*} G=\lim\limits_{\epsilon\to0} K_\epsilon G$, shows \eqref{theorem_inhomogeneous_2_2}. Now the assertion is a consequence of \eqref{weakduhamelbis}, (S1$'$), \eqref{theorem_inhomogeneous_2_2} and (S3$'$). 
\end{proof}

\section{Free resolvent estimates} 
\label{free}
\setcounter{equation}{0}

In this section, we collect several estimates on the free resolvent $ R_0 (z) = (-\Delta -z)^{-1} $ of the Laplacian $\Delta$ on $ \R^n $, $ n \geq 2 $.  The following estimate is a generalization to Lorentz spaces of a special case of uniform $L^p$ resolvent estimates, also called uniform Sobolev inequalities, due to \cite{KRS}. 

\begin{proposition}
\label{lemma_resolvent_1}
Let $n\ge3$. Then there exists $C>0$ such that for all $z\in\C \setminus[0,\infty)$
$$
|| R_0(z)f ||_{L^{2^*,2}}\le C ||f ||_{L^{2_*,2}},\quad f\in L^{2_*,2} \cap L^2. 
$$
\end{proposition}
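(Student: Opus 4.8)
The plan is to obtain this as the Lorentz-space form of the uniform Sobolev inequality of Kenig--Ruiz--Sogge \cite{KRS}, by real interpolation from their $L^p$--$L^q$ estimate. First I would reduce to $|z|=1$. The free resolvent is convolution with a kernel $G_z$ satisfying the exact scaling $G_{\mu^2z}(x)=\mu^{n-2}G_z(\mu x)$ for $\mu>0$, and the relation $\tfrac1{2_*}-\tfrac1{2^*}=\tfrac2n$ is precisely the one that makes the $L^{2_*,2}\to L^{2^*,2}$ operator norm invariant under this scaling; hence it suffices to treat $z=e^{i\theta}$, $\theta\in(0,2\pi)$. Moreover, for $\theta$ bounded away from $0$ and $2\pi$ the parameter $z$ stays in a compact subset of $\C\setminus[0,\infty)$, where $|G_z(x)|\le C|x|^{-(n-2)}\in L^{\frac{n}{n-2},\infty}(\R^n)$, so that regime follows at once from O'Neil's convolution inequality in Lorentz spaces. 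The content of the statement is therefore the uniformity as $z\to1$, i.e.\ as $z$ approaches the spectrum $[0,\infty)$.

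For this I would invoke the Kenig--Ruiz--Sogge estimate: for $n\ge3$ there is $C=C(n,p,q)$, independent of $z\in\C\setminus[0,\infty)$, with $\|R_0(z)f\|_{L^q(\R^n)}\le C\|f\|_{L^p(\R^n)}$ for every $(p,q)$ lying on a nondegenerate closed subsegment of the line $\tfrac1p-\tfrac1q=\tfrac2n$ that contains the self-dual pair $(2_*,2^*)$ (note $(2_*)'=2^*$) in its relative interior. Choosing $\eta>0$ small enough that the exponents defined by $\tfrac1{p_0}=\tfrac1{2_*}+\eta$, $\tfrac1{p_1}=\tfrac1{2_*}-\eta$ and $\tfrac1{q_j}=\tfrac1{p_j}-\tfrac2n$ still satisfy this bound, say $\|R_0(z)\|_{L^{p_j}\to L^{q_j}}\le C_j$ for $j=0,1$ uniformly in $z$, and noting that $\tfrac1{2_*}=\tfrac12(\tfrac1{p_0}+\tfrac1{p_1})$, $\tfrac1{2^*}=\tfrac12(\tfrac1{q_0}+\tfrac1{q_1})$, the standard identification of real interpolation spaces of Lebesgue spaces as Lorentz spaces gives $(L^{p_0},L^{p_1})_{1/2,2}=L^{2_*,2}$ and $(L^{q_0},L^{q_1})_{1/2,2}=L^{2^*,2}$ with equivalent norms. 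Applying the interpolation property to the linear operator $R_0(z)$ (which is consistently defined on $L^{p_0}+L^{p_1}\supset\mathcal S(\R^n)$, dense in $L^{2_*,2}$) then yields $\|R_0(z)f\|_{L^{2^*,2}}\le C_0^{1/2}C_1^{1/2}\|f\|_{L^{2_*,2}}$, first for $f\in\mathcal S(\R^n)$ and, by density, for all $f\in L^{2_*,2}\cap L^2$.

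I expect the whole difficulty to be concentrated in the Kenig--Ruiz--Sogge input, namely the uniformity of $\|R_0(z)\|_{L^p\to L^q}$ as $z$ tends to the positive real axis: this is where the Stein--Tomas $L^2$ restriction estimate for the sphere --- equivalently, the bound for the oscillatory convolution with kernel $\sim e^{i|x|}|x|^{-(n-1)/2}$ at unit frequency --- enters, and it is the only non-elementary ingredient. The ``Duyckaerts trick'' credited in the acknowledgements is a device reducing that uniform statement to a small finite set of values of $z$; once it is available, the real interpolation step above delivers the Lorentz endpoint $L^{2_*,2}\to L^{2^*,2}$ with no further effort.
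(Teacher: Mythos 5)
Your argument is correct, but it is not the route the paper takes. You obtain the Lorentz endpoint by real interpolation between two off-diagonal Kenig--Ruiz--Sogge bounds $L^{p_j}\to L^{q_j}$ with $\tfrac1{p_j}-\tfrac1{q_j}=\tfrac2n$ straddling the self-dual pair $(2_*,2^*)$, using $(L^{p_0},L^{p_1})_{1/2,2}=L^{2_*,2}$ and its analogue for the targets; since $(2_*,2^*)$ lies in the interior of the admissible KRS range (indeed $\tfrac1{2_*}-\tfrac12=\tfrac1n>\tfrac1{2n}$), the choice of small $\eta$ is legitimate and the interpolation step is standard, so the proof goes through with KRS as the only deep input. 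The paper instead proves the \emph{a priori} inequality $\|f\|_{L^{2^*,2}}\le C\|(-\Delta-z)f\|_{L^{2_*,2}}$ for \emph{all} $z\in\C$ and Schwartz $f$ by setting $u(t)=e^{izt}f$, which solves the free Schr\"odinger equation with source $F=e^{izt}(-\Delta-z)f$, applying the Keel--Tao endpoint Strichartz estimate in Lorentz spaces on $[-T,T]$, and dividing both sides by $\gamma(z,T)=\|e^{izt}\|_{L^2_T}\ge\sqrt T\to\infty$ to eliminate the $\|f\|_{L^2}$ term; the resolvent bound then follows by substituting $g=(-\Delta-z)^{-1}f$. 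That argument delivers the Lorentz refinement directly (Keel--Tao is already stated in $L^{2^*,2}$), needs no interpolation and no scaling reduction, and requires only the single dual pair rather than a neighbourhood of it. One small correction: the ``Duyckaerts trick'' is precisely this time-dependent device, not a reduction of the uniform KRS statement to finitely many values of $z$ as you speculate; your own proof does not use it and does not need it.
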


\begin{proof}
The following proof is due to T. Duyckaerts  \cite{Duy2} 
(see also \cite[Remark 8.8]{HaZh}). We first show that there exists $C>0$ such that for all $z\in \C$,
\begin{align}
\label{proof_lemma_resolvent_1_1}
|| f ||_{L^{2^*,2}}\le C ||(-\Delta-z)f ||_{L^{2_*,2}}\quad f\in {\mathcal S}(\R^n),
\end{align}
where $ {\mathcal S}(\R^n)$ is the space of Schwartz functions. Let $u(t):=e^{izt}f$ which solves
$$
i\partial_tu=\Delta u+F,\quad u|_{t=0}=f ,
$$
where $F=e^{izt}(-\Delta-z)f$. Then the endpoint Strichartz estimate for the free Schr\"odinger equation in Lorentz spaces (see \cite[Theorem 10.1]{KT}) implies that for any $T>0$,
\begin{align}
\label{proof_lemma_resolvent_1_2}
|| u ||_{L^2_TL^{2^*,2}}\le C ||f||_{L^2}+C ||F||_{L^{2}_TL^{2_*,2}}, 
\end{align}
where $C$ is independent of $T$. By virtue of the specific formula of $u$ and $F$, one can compute
\begin{align*}
||u||_{L^2_T L^{2^*,2}}=\gamma(z,T)||f||_{L^{2^*,2}},\quad
|| F ||_{L^2_TL^{2_*,2}}= \gamma(z,T) ||(-\Delta-z)f||_{L^{2_*,2}},
\end{align*}
where $\gamma(z,T):= || e^{izt} ||_{L^2_T} \geq \sqrt{T} , $
since $ |e^{izt}| \geq 1 $ either on $ [0,T] $ or $ [-T,0] $. In particular, $\gamma(z,T)\to \infty$ as $T\to  \infty$ for each $z$, so dividing by $\gamma(z,T)$ and letting $T\to  \infty$ in \eqref{proof_lemma_resolvent_1_2}, we obtain \eqref{proof_lemma_resolvent_1_1}. 

Now we  show that \eqref{proof_lemma_resolvent_1_1} implies the assertion. For $z\in\C \setminus[0,\infty)$, $(-\Delta-z)^{-1}$ maps $ {\mathcal S}(\R^n)$ into itself so, by plugging $g=(-\Delta-z)^{-1}f$ with $f\in {\mathcal S}(\R^n)$ into \eqref{proof_lemma_resolvent_1_1}, we  obtain the assertion for $f \in {\mathcal S}(\R^n)$, which also implies the assertion for all $f\in L^2\cap L^{2^*,2}$ by the density. 
\end{proof}

We next record two results on weighted resolvent estimates. 

\begin{proposition}
\label{lemma_resolvent_3}
For any $w\in L^2(\R^2)$, the multiplication operator by $w$ is $-\Delta$-smooth. 
\end{proposition}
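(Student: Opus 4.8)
The plan is to verify the resolvent characterization of $-\Delta$-smoothness, i.e. \eqref{H-smooth_1} with $H=-\Delta$ on $\R^2$ and $B$ the multiplication operator by $w$, whose adjoint $B^*$ is multiplication by $\bar w$ with domain $D(B^*)=\{f\in L^2:\bar wf\in L^2\}$. Fix $f\in D(B^*)$ and set $g:=\bar wf$. Since $w,f\in L^2(\R^2)$, Cauchy--Schwarz gives $\|g\|_{L^1}\le\|w\|_{L^2}\|f\|_{L^2}<\infty$, so $g\in L^1\cap L^2$; this integrability is what makes the argument work in dimension two. First I would recall the explicit form of the absolutely continuous spectral density of $-\Delta$ on $\R^2$: by Stone's formula together with the standard boundary values $R_0(\lambda\pm i0)(x,y)=\pm\tfrac{i}{4}H_0^{(1,2)}(\sqrt\lambda\,|x-y|)$ (using $H_0^{(1)}(-s)=-H_0^{(2)}(s)$) and the identity $H_0^{(1)}+H_0^{(2)}=2J_0$, the operator $E_0'(\lambda):=\tfrac{1}{2\pi i}\big(R_0(\lambda+i0)-R_0(\lambda-i0)\big)$ has integral kernel $E_0'(\lambda)(x,y)=\tfrac{1}{4\pi}J_0(\sqrt\lambda\,|x-y|)$. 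The crucial (and essentially the only) input is that $|J_0|\le1$, so this kernel is bounded by $\tfrac{1}{4\pi}$ uniformly in $\lambda>0$ and $x,y$. Consequently, since $g\in L^1$, for every $\lambda>0$ one has $0\le\langle E_0'(\lambda)g,g\rangle\le\tfrac{1}{4\pi}\|g\|_{L^1}^2\le\tfrac{1}{4\pi}\|w\|_{L^2}^2\|f\|_{L^2}^2$.

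Next, writing $z=\mu+i\varepsilon$ with $\varepsilon\ne0$, I would use the elementary identity $\tfrac{1}{\lambda-z}-\tfrac{1}{\lambda-\bar z}=\tfrac{2i\varepsilon}{|\lambda-z|^2}$ and the spectral representation of the free resolvent to obtain the Poisson-type formula
$$
\langle (R_0(z)-R_0(\bar z))g,g\rangle=2i\varepsilon\int_0^\infty\frac{\langle E_0'(\lambda)g,g\rangle}{|\lambda-z|^2}\,d\lambda .
$$
Combining this with the uniform bound on $\langle E_0'(\lambda)g,g\rangle$ from the previous paragraph and the estimate $\int_0^\infty|\lambda-z|^{-2}\,d\lambda\le\int_\R\big((\lambda-\mu)^2+\varepsilon^2\big)^{-1}\,d\lambda=\pi/|\varepsilon|$, I get
$$
\big|\langle (R_0(z)-R_0(\bar z))\bar wf,\bar wf\rangle\big|\le 2|\varepsilon|\cdot\frac{\|w\|_{L^2}^2\|f\|_{L^2}^2}{4\pi}\cdot\frac{\pi}{|\varepsilon|}=\frac12\,\|w\|_{L^2}^2\,\|f\|_{L^2}^2 ,
$$
which is precisely \eqref{H-smooth_1} with bound $a=\|w\|_{L^2(\R^2)}$, uniformly over $z\in\C\setminus\R$. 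This proves that multiplication by $w$ is $-\Delta$-smooth.

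The only point requiring care is the justification of the differentiated spectral representation for the vector $g=\bar wf$, which is merely in $L^1\cap L^2$ rather than Schwartz, together with the interchange of $d\lambda$-integration and the $L^2$ inner product. I expect this to be routine: since $g\in L^1$, the function $x\mapsto(E_0'(\lambda)g)(x)=\tfrac{1}{4\pi}\int_{\R^2}J_0(\sqrt\lambda\,|x-y|)g(y)\,dy$ is bounded (hence $\langle E_0'(\lambda)g,g\rangle$ makes literal sense by the pairing of $L^\infty$ with $L^1$), and one checks that $\lambda\mapsto\langle E_0(\lambda)g,g\rangle$ is absolutely continuous with exactly this density; alternatively one first argues for $g$ in a dense class and passes to the limit using the uniform bound above. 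Either way there is no genuine obstacle — all of the substance is contained in the boundedness of the Bessel kernel $\tfrac{1}{4\pi}J_0$, which replaces the uniform Sobolev inequality that is unavailable in $\R^2$.
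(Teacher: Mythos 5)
Your argument is correct, but it takes a genuinely different route from the paper. The paper's proof is a two-line reduction: it uses the time-domain characterization \eqref{H-smooth_2} of smoothness, the pointwise bound $\|wf\|_{L^2}\le\|w\|_{L^2}\|f\|_{L^\infty}$, and then quotes the maximal-operator estimate $\|e^{it\Delta}\psi\|_{L^\infty_xL^2_t}\le C\|\psi\|_{L^2}$ from Rogers' paper, which is exactly the scaling-critical local smoothing estimate in dimension two. You instead verify the resolvent characterization \eqref{H-smooth_1} directly via Kato's classical criterion $\sup_\lambda\|BE_0'(\lambda)B^*\|<\infty$, using the explicit spectral density of $-\Delta$ on $\R^2$ with kernel $\tfrac{1}{4\pi}J_0(\sqrt\lambda|x-y|)$ and the bound $|J_0|\le1$; the computation of the kernel (polar coordinates plus $\int_{S^1}e^{-is\omega\cdot e}d\omega=2\pi J_0(s)$), the Poisson-kernel integration in $\lambda$, and the reduction of the measurability/absolute-continuity issues to the continuity of $\hat g$ for $g\in L^1\cap L^2$ all check out. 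The two approaches are dual to one another at bottom — Rogers' estimate is essentially equivalent to the uniform boundedness of $E_0'(\lambda)(x,x)=\tfrac{1}{4\pi}$ — but yours is self-contained and elementary (no external citation beyond Bessel function basics) and yields the sharp explicit smoothing bound $a=\|w\|_{L^2(\R^2)}$, whereas the paper's is shorter at the cost of invoking a nontrivial result as a black box.
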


\begin{proof}
Using the characterization (\ref{H-smooth_2}), it is an immediate consequence of the estimate 
$$
|| e^{it\Delta}\psi ||_{L^{\infty}_xL^2_t}\le C ||\psi||_{L^2}
$$
proved in \cite[Theorem 3]{Rogers}, and the trivial inequality $||wf||_{L^2}\le ||w ||_{L^2}||f||_{L^\infty}$. \end{proof}

Note that $ \langle x \rangle^{-1}$ is known to be not $-\Delta$-smooth if $n=2$ (see \cite{Walther}) so $L^2(\R^2)$ cannot be replaced by $L^{2,\infty}(\R^2)$ in general,  in contrast to higher dimensions $n\ge3$. This is the main reason to take $V\in L^1(\R^2)$ in Theorem  \ref{theorem_1}. 
\begin{proposition} Let $ n \geq 3 $.
\label{proposition_application_3}
Let $\alpha_1,\alpha_2, \sigma$ satisfy $\frac{2n}{n+1}< \alpha_1, \alpha_2\le 2$ and $\frac{n-1}{\alpha_1 + \alpha_2 - 2}< \sigma \le \frac {n}{\alpha_M}$, where $\alpha_M=\max(\alpha_1,\alpha_2)$. Then there exists $C=C(n,\alpha_1,\alpha_2,\sigma)>0$ such that, for any $w_1\in M^{\frac{2n}{\alpha_1},2\sigma}$ and $w_2\in M^{\frac{2n}{\alpha_2},2\sigma}$ with $w_1,w_2>0$, any $z\in \C \setminus[0,\infty)$ and any $ \varphi , \psi \in C_0^{\infty} (\R^n) $,
\begin{align}
\label{proposition_application_3_1}
\left| \left\langle R_0 (z) w_1 \varphi , w_2 \psi \right\rangle  \right| \le C|z|^{-1+\frac{\alpha_1+\alpha_2}{4}} ||w_1 ||_{M^{\frac{2n}{\alpha_1},2\sigma}} ||w_2||_{M^{\frac{2n}{\alpha_2},2\sigma}} || \varphi ||_{L^2} || \psi ||_{L^2}  .
\end{align}
\end{proposition}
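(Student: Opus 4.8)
\emph{Proof proposal.} The plan is to combine an exact scaling, which extracts the power $|z|^{-1+(\alpha_1+\alpha_2)/4}$, with a dyadic analysis of the resolvent kernel in which its local (Riesz‑type) part and its global (oscillatory) part are treated separately.

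\textbf{Step 1: reduction by scaling.} Write $G_z$ for the Schwartz kernel of $R_0(z)$. The homogeneity of $-\Delta_x-z$ under $x\mapsto\lambda x$ gives $G_z(\lambda u)=\lambda^{2-n}G_{\lambda^2 z}(u)$ for $\lambda>0$. Choosing $\lambda=|z|^{-1/2}$ and substituting $x=\lambda\tilde x$, $y=\lambda\tilde y$ in $\langle R_0(z)w_1\varphi,w_2\psi\rangle$, and using that $\|w(\lambda\cdot)\|_{M^{q,\tau}}=\lambda^{-n/q}\|w\|_{M^{q,\tau}}$ and $\|\varphi(\lambda\cdot)\|_{L^2}=\lambda^{-n/2}\|\varphi\|_{L^2}$, one sees that the prefactor $\lambda^{2n}\,|z|^{(n-2)/2}\,\lambda^{-\alpha_1/2-\alpha_2/2}\,\lambda^{-n}$ collapses to exactly $|z|^{-1+(\alpha_1+\alpha_2)/4}$, so that \eqref{proposition_application_3_1} is equivalent to the \emph{uniform} bilinear bound
\[
\big|\langle R_0(\omega)W_1\Phi,W_2\Psi\rangle\big|\le C\,\|W_1\|_{M^{\frac{2n}{\alpha_1},2\sigma}}\|W_2\|_{M^{\frac{2n}{\alpha_2},2\sigma}}\|\Phi\|_{L^2}\|\Psi\|_{L^2},
\]
valid for all $\omega$ on the unit circle with $\omega\notin[0,\infty)$, the constant being independent of $\omega$ — in particular as $\omega\to1$, which is the genuine limiting‑absorption regime. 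From now on we may assume $|z|=1$.

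\textbf{Step 2: splitting the kernel.} For $|z|=1$ one has $G_z(x)=|x|^{2-n}\Phi_z(|x|)$ with $\Phi_z$ bounded near $0$ and $|\Phi_z^{(j)}(r)|\lesssim r^{(n-1)/2-j}$ for $r\ge1$, uniformly in $z$ on the unit circle (a modified Hankel function estimate). Write $G_z=K_{\rm loc}+K_{\rm glob}$ with $K_{\rm loc}=G_z\mathds 1_{\{|x|\le1\}}$ and $K_{\rm glob}=G_z\mathds 1_{\{|x|>1\}}$. For $K_{\rm loc}$ one has $|K_{\rm loc}(x)|\lesssim|x|^{2-n}\mathds 1_{\{|x|\le1\}}$; decomposing dyadically $|x|\sim2^{-k}$ ($k\ge0$), the $k$‑th piece is $O(2^{k(n-2)})$ times convolution against $\mathds 1_{\{|x|\sim2^{-k}\}}$, which one estimates by covering $\R^n$ by cubes of side $2^{-k}$, applying Cauchy–Schwarz over the cubes and controlling $\|w_j\|_{L^2(Q)}$ by $\|w_j\|_{M^{2n/\alpha_j,2\sigma}}$ via Hölder on $Q$ (using $\sigma>1$, which indeed holds since $\tfrac{n-1}{\alpha_1+\alpha_2-2}\ge1$ for $n\ge3$); the resulting series is geometric with ratio $2^{(\alpha_1+\alpha_2)/2-2}<1$ (here $\alpha_1+\alpha_2\le4$ is used — the borderline case $\alpha_1=\alpha_2=2$ being handled directly by writing $(-\Delta)^{-1}=(-\Delta)^{-1/2}(-\Delta)^{-1/2}$ and applying Fefferman's inequality \eqref{Fefferman} twice, together with $\|w_j^2\|_{M^{n/2,\sigma}}=\|w_j\|_{M^{n,2\sigma}}^2$ and \eqref{HolderMorrey}). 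This settles the local contribution in the full range.

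\textbf{Step 3: the global part, and the main obstacle.} On $\{|x-y|>1\}$ the kernel is oscillatory, $|x-y|^{2-n}\Phi_z(|x-y|)$ with $\Phi_z(r)\sim r^{(n-1)/2}e^{i\sqrt z\,r}$; decomposing dyadically $|x-y|\sim2^k$ ($k\ge0$), each dyadic piece reduces after rescaling to an extension/restriction operator for the sphere, so the task becomes a Stein–Tomas–type estimate adapted to Morrey–Campanato weights (in the spirit of \cite{KRS,PeVe,BVZ}): the bilinear quantity $\langle\widehat{w_1\varphi},\widehat{w_2\psi}\rangle_{L^2(\mathbb S^{n-1})}$ and its rescaled analogues should be bounded by $\|w_1\|_{M^{2n/\alpha_1,2\sigma}}\|w_2\|_{M^{2n/\alpha_2,2\sigma}}\|\varphi\|_{L^2}\|\psi\|_{L^2}$ times a gain $2^{-\varepsilon k}$, and the conditions $\alpha_1,\alpha_2>\tfrac{2n}{n+1}$ (the $L^2$‑restriction threshold with Morrey input, via \eqref{espacexmoins1}‑type scaling) together with $\sigma>\tfrac{n-1}{\alpha_1+\alpha_2-2}$ are precisely what make $\sum_k 2^{-\varepsilon k}$‑type sums converge; the principal‑value (non‑resonant) part of $K_{\rm glob}$ is absorbed by the same $TT^*$ almost‑orthogonality argument. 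The upper bound $\sigma\le n/\alpha_M$ enters only to ensure the spaces $M^{2n/\alpha_j,2\sigma}$ are defined (one needs $2\sigma\le 2n/\alpha_j$). Adding the local and global contributions gives \eqref{proposition_application_3_1} for $\varphi,\psi\in C_0^\infty$, whence for all $\varphi,\psi\in L^2$ by density. The hard part is Step 3 — establishing the Morrey–Campanato restriction/extension estimate uniformly down to the real axis and with sharp exponents; Steps 1–2 are bookkeeping plus the standard Fefferman trace inequality.
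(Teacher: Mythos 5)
Your Step 1 (scaling out $|z|^{-1+\frac{\alpha_1+\alpha_2}{4}}$) and Step 2 (the local, Riesz-type part of the kernel via dyadic cubes, Cauchy--Schwarz and the Morrey norm on balls, with the borderline $\alpha_1=\alpha_2=2$ handled by \eqref{Fefferman}) are sound. The problem is Step 3, which carries the entire analytic content of Proposition \ref{proposition_application_3} and is not a proof. You reduce the oscillatory part of the kernel to ``a Stein--Tomas-type estimate adapted to Morrey--Campanato weights'' and then assert, with no estimate, that the hypotheses $\alpha_1,\alpha_2>\frac{2n}{n+1}$ and $\sigma>\frac{n-1}{\alpha_1+\alpha_2-2}$ ``are precisely what make'' the dyadic sums converge. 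That two-weight restriction/extension bound, uniformly as $z$ approaches $[0,\infty)$, is exactly as deep as the statement being proved; nothing in your write-up identifies the mechanism by which either hypothesis would enter, nor how the almost-orthogonality for the non-resonant part is to be run with Morrey--Campanato weights. As it stands the argument is circular at its crux, and you say as much yourself (``the hard part is Step 3'').

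For comparison, the paper avoids the oscillation issue altogether by Stein complex interpolation in the analytic family $(-\Delta-z)^{-s-it}$ (after the same scaling reduction to $|z|=1$). At $s=0$ one has the trivial $L^2$ bound with $e^{C|t|}$ growth; at $s=\frac{n-1}{\alpha_1+\alpha_2-2}\in[\frac{n-1}{2},\frac{n+1}{2})$ the Bessel-function estimates of \cite{KRS} give the \emph{pointwise, non-oscillatory} kernel bound $|(-\Delta-z)^{-s-it}(x-y)|\le Ce^{Ct^2}|x-y|^{-\frac{n+1}{2}+s}$ uniformly on $|z|=1$, so the operator is dominated by the fractional integral $I_{\beta}$ with $\beta=\frac{n-1}{2}+s$, and the two-weight Sawyer--Wheeden theorem \cite{SaWh} applied to $w_1^{s},w_2^{s}$ yields the endpoint bound in terms of the Morrey--Campanato norms. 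In that scheme $\sigma>\frac{n-1}{\alpha_1+\alpha_2-2}$ serves only to make the Sawyer--Wheeden exponent $p=\sigma/s$ strictly larger than $1$, and $\sigma\le n/\alpha_M$ keeps the Morrey indices of $w_j^{s}$ admissible --- neither is a convergence condition for a dyadic sum, contrary to what your Step 3 suggests. To repair your proposal you must either prove your weighted restriction estimate in full, or replace Steps 2--3 by the interpolation-plus-fractional-integral argument.
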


Note that $ w_1,w_2 \in L^{2\sigma}_{\rm loc}\subset L^2_{\rm loc} $ so the right hand side of (\ref{proposition_application_3_1}) has a clear sense. This proposition (and its proof) is a slight modification of \cite[Lemma 4]{Fra}, the change being that we allow $ w_1$ to be different from $ w_2 $. It turns out to be useful for the applications. We will need it in proof of Theorem \ref{theorem_0} in paragraph \ref{premiereapplication}; this is also useful to prove  eigenvalues estimates (see \cite{Miz1}).

\begin{proof}
Since $ || w(\lambda\cdot)||_{M^{\frac{2n}{\alpha},2\sigma}}=\lambda^{-\alpha/2} ||w ||_{M^{\frac{2n}{\alpha},2\sigma}}$, it  suffices to show \eqref{proposition_application_3_1}  for  $|z|=1$, $z\neq1$. We take $\psi,\varphi\in C_0^\infty(\R^n)$ and may assume $||\psi||_{L^2}=||\varphi||_{L^2}=1$. We wish to interpolate between the simple bound
$$ \left| \left\langle  (-\Delta-z)^{-it} w_1^{ {it}} \varphi , w_2^{- {it} } \psi \right\rangle  \right| \le C e^{C|t|}
, \quad t \in \R $$
 and the non trivial following one, for some suitable $ s \geq 1 $ to be found,
%
 %
\begin{align}
\label{proof_proposition_application_3_1_preuve}
\left| \left\langle  (-\Delta-z)^{-s-it} w_1^{{s+it}} \varphi,w_2^{{s-it}} \psi  \right\rangle \right|
\le Ce^{Ct^2}|| w_1 ||_{M^{\frac{2n}{\alpha_1},2\sigma}}^s
||w_2||_{M^{\frac{2n}{\alpha_2},2\sigma}}^s
,  \quad t \in \R . 
\end{align}
Using that $ s + i t \mapsto \langle (-\Delta-z)^{-s-it} w_1^{{s+it}}  \varphi , w_2^{{s-it}} \psi  \rangle  $
is holomorphic for $ s \in (0, \sigma) $ and continuous for $ s \in [ 0 , \sigma] $,  (\ref{proposition_application_3_1}) will  follow by interpolation.
Note that the upper bound $s \leq \sigma$  ensures $ w_1^{s},w_2^s\in L^2_{\rm loc} $. Let us prove (\ref{proof_proposition_application_3_1_preuve}).
 The first tool is the following pointwise bound on the kernel of $(-\Delta-z)^{-s-it}$
\begin{align}
\label{proof_proposition_application_3_6}
|(-\Delta-z)^{-s-it}(x-y)|\le C e^{Ct^2}|x-y|^{-\frac{n+1}{2}+s},
\end{align}
which holds for $\frac{n-1}{2} \leq s < \frac{n+1}{2}$, $ t \in \R $ and uniformly in $|z|=1$, $z\neq1$. This is seen from the explicit formula of the kernel in term of Bessel functions   (see \cite[Section 2 (2.21)-(2.25)]{KRS}). 
The second tool is  a weighted boundedness of the fractional integral operator $ I_{\beta} $ (the convolution with $|x|^{-n+\beta}$ for $0<\beta<n$). It is shown in \cite{SaWh} that if $w,v>0$ satisfy, for some $1 < p < \infty$, 
\begin{align}
\label{proof_proposition_application_3_3}
\sup_{x,r} \Big\{r^{\beta}\Big(r^{-n}\int_{B_r(x)} w(y)^pdy\Big)^\frac{1}{2p}\Big(r^{-n}\int_{B_r(x)} v(y)^{-p}dy\Big)^\frac{1}{2p}\Big\}\le C_p
\end{align}
 then there exists $C=C(n,\beta)>0$ independent of $w,v$ and $C_p$ such that
\begin{align}
\label{proof_proposition_application_3_4}
\big| \big| w^{\frac 12}I_{\beta}  \varphi \big| \big|_{L^2}\le CC_p \big| \big| v^{\frac12} \varphi \big| \big|_{L^2},\quad  \varphi \in C_0^{\infty}(\R^n). 
\end{align}
If $v=\tilde{w}^{-1}$, then the left hand side of \eqref{proof_proposition_application_3_3} is dominated by $ ||w^{1/2}||_{M^{\frac{2n}{\beta_2},2p}}||\tilde{w}^{1/2}||_{M^{\frac{2n}{\beta_1},2p}}$, provided  $2 \beta =\beta_1+\beta_2$ and $1<p\le n/\max(\beta_1,\beta_2)$ (this last condition is required since the second index of a Morrey-Campanato space cannot be smaller than the first one). Therefore, \eqref{proof_proposition_application_3_4} shows that
\begin{align}
\label{proof_proposition_application_3_5}
\big| \langle  w^{1/2} I_{\beta} \tilde{w}^{1/2}  \varphi , \psi \rangle \big|\le C ||w^{1/2}||_{M^{\frac{2n}{\beta_2},2p}}||\tilde{w}^{1/2}||_{M^{\frac{2n}{\beta_1},2p}}
\end{align}
with some $C=C(n,\beta_1,\beta_2,p)>0$  independent of $w,\tilde{w}, \varphi$ and $\psi$.
Now, using (\ref{proof_proposition_application_3_6}) and (\ref{proof_proposition_application_3_5}) with $ w^{1/2} = w_2^{s} $, $ \tilde{w}^{1/2} = w_1^s $, $ \beta_1 = \alpha_1 s  $, $ \beta_2 = \alpha_2 s $, $ p = \sigma/s $ and $ \beta = \frac{n-1}{2} + s $, we find
\begin{align*}
\big| \langle w_2^{{s+it}}(-\Delta-z)^{-s-it}w_1^{{s+it}} \varphi, \psi \rangle \big|
&\le Ce^{Ct^2}
||w_1^s||_{M^{\frac{2n}{\beta_1},2p}}
||w_2^s||_{M^{\frac{2n}{\beta_2},2p}}\\
&=Ce^{Ct^2}
||w_1||_{M^{\frac{2sn}{\beta_1},2sp}}^s
||w_2||_{M^{\frac{2sn}{\beta_1},2sp}}^s.
\end{align*}
In other words, (\ref{proof_proposition_application_3_1_preuve}) holds with $ s = \frac{n-1}{\alpha_1 + \alpha_2 -2}  $ which   belongs to  $ [ (n-1)/2 , (n+1)/2) ) $ and $ [1,\sigma) $ under our assumptions (note that, assuming $\sigma$ strictly greater than $ \frac{n-1}{\alpha_1 + \alpha_2 - 2} $ ensures that $ p = \sigma/s > 1 $). The result follows by interpolation (note that if $ n = 3 $ and $ \alpha_1 = \alpha_2  = 2$, one obtains directly (\ref{proposition_application_3_1}) from (\ref{proof_proposition_application_3_1_preuve}) with $s=1 \in [(n-1)/2 , (n+1)/2)$). 
\end{proof}

\section{Proofs of the main results}
\label{section_proof}
\setcounter{equation}{0}
In the present section, we show how to use the following Theorem \ref{theorem_resolvent_2main} and abstract techniques prepared in Section \ref{sectionabstraite} to prove all results stated in Section \ref{sectionresults}, except  Theorem \ref{theorem_3} and Corollary \ref{Corollaire-Mourre} which will be proved in Section \ref{sectionweakMourre}.


\begin{theorem}
\label{theorem_resolvent_2main}
{\rm (1)} Let $n\ge3$ and suppose that $V$ satisfies Assumption \ref{assumption_1} and that
\begin{align}
\label{theorem_resolvent_2main_1}
\big|\big||V|^\frac12g\big|\big|_{L^2}+\big|\big||x\cdot\nabla V|^\frac12g\big|\big|_{L^2}+\big|\big||x|Vg\big|\big|_{L^2}
\le C||g||_{\mathcal H^1},\quad g\in {\mathcal H}^1. 
\end{align}
Then there exists $C>0$ such that
$$
 |||x|^{-1}(H-z)^{-1}|x|^{-1} f||_{L^2} \leq C || f ||_{L^2}, \quad z\in \C\setminus[0,\infty) , \ f \in C_0^{\infty}({\mathbb R}^n \setminus 0). 
$$
{\rm (2)} Let $n=2$ and $V \in {\mathcal X}_2$ satisfy Assumption \ref{assumption_2}. Then
$$
||V^\frac12 (H-z)^{-1}V^\frac12 f||_{L^2} \leq C || f ||_{L^2}, \quad z \in \C\setminus[0,\infty) , \ f \in C_0^{\infty}({\mathbb R}^2 \setminus 0) .
$$

\end{theorem}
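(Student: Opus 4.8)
The plan is to establish both inequalities by a Morawetz-type multiplier (``positive commutator'') argument applied directly to the perturbed stationary equation, in the spirit of \cite{PeVe,BVZ}, rather than perturbatively off $-\Delta$. Since Assumptions \ref{assumption_1} and \ref{assumption_2} and the weights $|x|^{-1}$ and $V^{1/2}$ are invariant under the scaling \eqref{scalingofpotentials}, it suffices to treat $|z|=1$, and since $(H-z)^{-1}$ is harmless when $z$ stays away from $[0,\infty)$, the only substantial range is $z=1\pm i\varepsilon$ with $\varepsilon$ arbitrary. Writing $u:=(H-z)^{-1}g$ with $g\in C_0^\infty(\R^n\setminus0)$, so that $(-\Delta+V-z)u=g$ (and $u$ is smooth away from $0$ and decays exponentially for $\varepsilon\neq0$), part (1) amounts to bounding $\| |x|^{-1}u\|_{L^2}$ by $\| |x|g\|_{L^2}$ (take $g=|x|^{-1}f$) and part (2) to bounding $\|V^{1/2}u\|_{L^2}$ by $\|V^{-1/2}g\|_{L^2}$ (take $g=V^{1/2}f$; here $V>0$ and $V^{-1}\in L^1_{\mathrm{loc}}$ are used). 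Because $V$ is critically singular at the origin, the multiplier identities below have to be derived first for a regularized problem (truncating $V$ and the multiplier near $0$ and $\infty$, or working on a dense subspace, using the form bounds \eqref{theorem_resolvent_2main_1} when $n\ge3$, respectively $V\in\mathcal X_2$ together with $V^{-1}\in L^1_{\mathrm{loc}}$ when $n=2$), and then passed to the limit with the resulting uniform a priori bounds; checking that the boundary contributions at $x=0$ drop out is precisely what those integrability hypotheses are for.

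\emph{The case $n\ge3$.} I would pair the equation with a radial Morawetz multiplier $\mathcal A u=\nabla\phi\cdot\nabla u+\tfrac{1}{2}(\Delta\phi)u$, $\phi=\phi(|x|)$ a bounded increasing profile of controlled convexity (plus the standard phase correction adapted to $\operatorname{Re}z=1$), and take the appropriate real part. The $-\Delta$-contribution is the classical free Morawetz identity: after integration by parts it produces a nonnegative quadratic form $\int D^2\phi(\nabla u,\nabla u)-\tfrac14\int(\Delta^2\phi)|u|^2$ which, for a suitable $\phi$, dominates $\| |x|^{-1}u\|_{L^2}^2$ together with weighted Dirichlet terms. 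The $V$-contribution reduces, after integration by parts and cancellation, to $-\tfrac12\int(\nabla\phi\cdot\nabla V)|u|^2=-\tfrac12\int\tfrac{\phi'}{|x|}(x\cdot\nabla V)|u|^2$: its repulsive part has a favorable sign, while the remaining part, finite by \eqref{theorem_resolvent_2main_1}, is absorbed into the good quadratic form by the quantified repulsivity inequality \eqref{assumption_1_2} (with \eqref{assumption_1_1} providing coercivity, excluding a zero-energy obstruction, and giving $\mathcal G^1=\mathcal H^1$). The data term $|\operatorname{Re}\langle g,\mathcal A u\rangle|$ is bounded, via Cauchy--Schwarz and the choice of $\phi$, by $\| |x|g\|_{L^2}=\|f\|_{L^2}$ times the square root of the good quadratic form; and since $\operatorname{Re}\langle u,\mathcal A u\rangle=0$ for this multiplier, the spectral-parameter terms are controlled by combining with the energy identity obtained from pairing with $\bar u$, which gives $\varepsilon\|u\|_{L^2}^2=|\operatorname{Im}\langle g,u\rangle|\le\| |x|g\|_{L^2}\,\| |x|^{-1}u\|_{L^2}$ and analogous bounds for $\varepsilon\|\nabla u\|_{L^2}^2$. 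Absorbing then yields $\| |x|^{-1}u\|_{L^2}\lesssim\|f\|_{L^2}$.

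\emph{The case $n=2$.} Here there is no free estimate to lean on --- the two-dimensional free resolvent has a logarithmic singularity at $z=0$, and the dilation-generator identity for $-\Delta$ alone yields no control on any weighted $L^2$-norm of $u$ --- so the weight $V^{1/2}$ must be produced by the potential itself, which is exactly what the repulsivity in Assumption \ref{assumption_2} provides. I would use the (suitably truncated) dilation multiplier $A=-\tfrac{i}{2}(x\cdot\nabla+\nabla\cdot x)$ and take the imaginary part of $\langle(-\Delta+V-z)u,Au\rangle$: the bulk term is $\langle[H,iA]u,u\rangle=\langle(-2\Delta-x\cdot\nabla V)u,u\rangle$, which by Assumption \ref{assumption_2} is $\ge 2\|\nabla u\|_{L^2}^2+(1+\delta_0)\|V^{1/2}u\|_{L^2}^2$, while the upper bound $-x\cdot\nabla V\le\delta_0^{-1}V$ and the $\mathcal X_2$ bounds $|x|^2(x\cdot\nabla)^\ell V\in L^\infty$, $\ell\le2$, ensure the truncation errors are controlled by (or vanish against) the bulk. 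The data term $\operatorname{Im}\langle g,Au\rangle=\operatorname{Im}\langle f,V^{1/2}Au\rangle\le\|f\|_{L^2}\|V^{1/2}Au\|_{L^2}$ is handled by noting that $V^{1/2}Au=V^{1/2}(x\cdot\nabla u+u)$ with $|x|^2V\in L^\infty$, so $\|V^{1/2}Au\|_{L^2}\lesssim\|\nabla u\|_{L^2}+\|V^{1/2}u\|_{L^2}$, controlled by the square root of the bulk; the $\operatorname{Im}z$-terms are absorbed through a cutoff-and-limit argument combined with the energy identity ($\varepsilon\|u\|_{L^2}^2=|\operatorname{Im}\langle g,u\rangle|\le\|f\|_{L^2}\|V^{1/2}u\|_{L^2}$). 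This closes the estimate and gives $\|V^{1/2}u\|_{L^2}\lesssim\|f\|_{L^2}$.

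\emph{The main obstacle.} The algebra of the multiplier identities is routine; the real difficulty --- and the reason the precise hypotheses \eqref{theorem_resolvent_2main_1} (for $n\ge3$) and $V\in\mathcal X_2$, $V^{-1}\in L^1_{\mathrm{loc}}$ (for $n=2$) appear --- is their rigorous justification in the presence of the critical singularity at the origin: one must regularize, bound all error and boundary terms uniformly both in the regularization parameter and in $z$, and confirm that the origin contributes nothing in the limit. A secondary, dimension-$\ge3$-specific difficulty is the choice of the profile $\phi$: it must simultaneously make the free quadratic form coercive enough to see $\| |x|^{-1}u\|_{L^2}^2$, keep the weights comparable to $1$ so the non-repulsive part of the potential term can be absorbed via \eqref{assumption_1_2}, and yield constants independent of the energy after the scaling reduction. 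In dimension $2$ the corresponding difficulty is the treatment of the truncation errors and the spectral-parameter terms, since there --- unlike in dimension $\ge3$ --- no part of the required estimate survives when $V$ is switched off, so the entire bound rests on the sharpness of Assumption \ref{assumption_2}.
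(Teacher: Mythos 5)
Your overall architecture is the one the paper actually uses: Appendix \ref{appendix_C} follows \cite{BVZ} and derives exactly the multiplier identities you describe (pairing the Helmholtz equation with $\bar u$, $r\bar u$ and the dilation multiplier, Lemma \ref{lemma_appendix_C_1}), combines them with the gauge transform $v_\lambda=e^{-i\lambda^{1/2}r}u$ (Lemma \ref{lemma_appendix_C_2}), absorbs the potential terms via \eqref{assumption_1_1}--\eqref{assumption_1_2} and Hardy, handles the regime $\varepsilon\ge\lambda$ (and $\lambda<0$) by the energy identity alone, and spends most of its effort on the regularization needed to justify the identities at the critical singularity --- which is where \eqref{theorem_resolvent_2main_1} enters, exactly as you predict. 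Your $n=2$ plan (dilation commutator $[H,iA]=-2\Delta-x\cdot\nabla V$, lower bound $(1+\delta_0)V$ from Assumption \ref{assumption_2}, data term controlled by $\|V^{1/2}Au\|\lesssim\|\nabla u\|+\|V^{1/2}u\|$ using $|x|^2V\in L^\infty$) matches the paper's essentially line by line.

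There is, however, one concrete step in your $n\ge3$ plan that would fail as stated: a \emph{bounded} increasing radial profile $\phi$ cannot make the free quadratic form $\int D^2\phi(\nabla u,\nabla u)-\tfrac14\int(\Delta^2\phi)|u|^2$ dominate $\||x|^{-1}u\|_{L^2}^2$. With $\phi'$ bounded this form only controls quantities of the type $\sup_R R^{-2}\int_{|x|=R}|u|^2$ and $\int|x|^{-3}|u|^2$ over a truncated region (the Perthame--Vega/Agmon--H\"ormander sup-over-scales norms), which neither dominate the integral $\int|x|^{-2}|u|^2$ nor have the scale-invariance required for a bound uniform in $z$ after your reduction to $|z|=1$. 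The weight $|x|^{-1}$ is produced in the paper by the \emph{unbounded} dilation multiplier $x\cdot\nabla+n/2$ (i.e.\ $\nabla\phi=x$): its commutator with $-\Delta$ yields the full $2\|\nabla u\|_{L^2}^2$ in \eqref{proof_C_6}, and subtracting the energy identity \eqref{proof_C_2} and gauging converts this into $\|\nabla v_\lambda\|_{L^2}^2$, which controls $\||x|^{-1}u\|_{L^2}^2$ by Hardy. Since you already use precisely this dilation multiplier in dimension $2$, the repair is to use it in dimension $n\ge3$ as well, supplemented by the multipliers $\bar u$ and $r\bar u$ to cancel the $\lambda$- and $\varepsilon$-terms (this is the specific linear combination $\eqref{proof_C_6}-\eqref{proof_C_2}-2\lambda^{1/2}\times\eqref{proof_C_5}+\varepsilon\lambda^{-1/2}\times\eqref{proof_C_4}$ computed in Lemma \ref{lemma_appendix_C_2}); one must then also check, as the paper does, that $r^{1/2}u$ and $r^{1/2}\nabla u$ lie in $L^2$ so that these pairings and the weighted Hardy inequality used on the term $-\varepsilon\lambda^{-1/2}\int rV|v_\lambda|^2$ are legitimate.
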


Note that \eqref{theorem_resolvent_2main_1} holds if $ V \in {\mathcal X}_n^{\sigma} $ with $ \frac{n-1}{2}<\sigma \leq \frac{n}{2}$. The proof of this theorem itself is based on the techniques of \cite{BVZ} which we follow closely.  However, we cannot use directly the result of  \cite{BVZ} since our assumptions are slightly different from theirs (see Remark \ref{remark_C}) so we give a complete proof in Appendix \ref{appendix_C}. 

\subsection{Proof of Theorem \ref{theorem_0}} \label{premiereapplication} 
If $n=2$, the statement is exactly  Theorem \ref{theorem_resolvent_2main} (2) so we assume  that $ n \geq 3 $.  We use the decomposition $ H = H_0 + Y^*Z $ of Section \ref{sectionabstraite} so we  let  $ V = Y^* Z $ with $ Y:= |x|V$, $ Z := |x|^{-1}$, and $ H_0 = - \Delta $. Recall that $|x|V\in M^{n,2 \sigma} $ by assumption. We may assume $w_1,w_2\ge0$ without loss of generality since if we write $w_j=\mathrm{sgn}\, w_j|w_j|$ then $\mathrm{sgn}\, w_j$ is bounded on $L^2$. 
 
Let us first prove the result with additional conditions that $w_1^{-1},w_2^{-1}\in  L^2_{\mathrm{loc}}$ and $w_1,w_2>0$. We shall use Proposition \ref{proposition_abstract_1} with $\mathcal H=L^2$, $\mathcal A=w_2L^2$, $\mathcal B=w_1L^2$ with norms $||\psi||_{\mathcal A}=|| w_2^{-1}\psi||_{L^2}$ and $||\varphi||_{\mathcal B}=|| w_1^{-1}\varphi||_{L^2}$, where we note that $\mathcal A,\mathcal B$ are Banach spaces under above additional conditions. By Proposition \ref{proposition_application_3} with $\sigma_j=\sigma$ and $\alpha_j=2$,  (\ref{proposition_abstract_1_1}) to (\ref{proposition_abstract_1_4}) are satisfied with
\begin{align*}
r_1&\le C||w_1||_{M^{n,2\sigma}}||w_2||_{M^{n,2\sigma}},\\
r_2,r_3&\le C||w_2||_{M^{n,2\sigma}}||{|x|V}||_{M^{n,2\sigma}}\le C||w_2||_{M^{n,2\sigma}},\\
r_4&\le C||w_1||_{M^{n,2\sigma}}|||x|^{-1}||_{M^{n,2\sigma}}\le C||w_1||_{M^{n,2\sigma}},
\end{align*}
where $C>0$ is independent of $w_1,w_2$ and $z\in \C\setminus[0,\infty)$. 
The condition (\ref{proposition_abstract_1_5}) with some $r_5$ (independent of $w_j$ and $z$) follows from Theorem \ref{theorem_resolvent_2main} (1). Hence we learn by \eqref{inegaliteresolvantefaible} that 
 $$
 |\langle(H-z)^{-1}\psi,\varphi\rangle|\le C||w_1||_{M^{n,2\sigma}}||w_2||_{M^{n,2\sigma}}||w_2^{-1}\psi||_{L^2}||w_1^{-1}\varphi||_{L^2},\quad z\in \C\setminus[0,\infty),
$$
for all $\psi\in L^2\cap w_2L^2$ and $\varphi\in L^2\cap w_1L^2$, which implies
$$
 |\langle w_1(H-z)^{-1}w_2\psi,\varphi\rangle|\le ||w_1||_{M^{n,2\sigma}}||w_2||_{M^{n,2\sigma}}||f||_{L^2}||g||_{L^2},\quad z\in \C\setminus[0,\infty),
$$
for all $f,g\in C_0^\infty(\R^n)$, where we have used \eqref{Fefferman} to see that $C_0^\infty(\R^n)\subset L^2\cap w_j^{-1}L^2$. By density and duality arguments, $w_1(H-z)^{-1}w_2$ extends a bounded operator on $L^2$ and satisfies 
$$
||w_1(H-z)^{-1}w_2f||_{L^2}\le C||w_1||_{M^{n,2\sigma}}||w_2||_{M^{n,2\sigma}}||{f}||_{L^2},\quad f\in C_0^\infty(\R^n),\ z\in \C\setminus[0,\infty). 
$$

For general $w_j\in M^{n,2\sigma}$, we set $w_j(\epsilon)=w_j+\epsilon \langle x\rangle ^{-2}$ and apply the above result to obtain
\begin{align}
\label{proof_theorem_0_1}
||w_1(\epsilon)(H-z)^{-1}w_2(\epsilon)f||_{L^2}\le C||w_1(\epsilon)||_{M^{n,2\sigma}}||w_2(\epsilon)||_{M^{n,2\sigma}}||{f}||_{L^2}. 
\end{align} 
It is not hard to see that $w_1(\epsilon)(H-z)^{-1}w_2(\epsilon)f\to w_1(H-z)^{-1}w_2f$ for any $f\in C_0^\infty(\R^n)$ and $||w_j(\epsilon)||_{M^{n,2\sigma}}\to ||w_j||_{M^{n,2\sigma}}$ as $\epsilon\to0$ (note that $||w_j||_{M^{n,2\sigma}}\le ||w_j(\epsilon)||_{M^{n,2\sigma}}\le ||w_j||_{M^{n,2\sigma}}+C\epsilon$). 
Hence, letting $\epsilon\to0$ in \eqref{proof_theorem_0_1}, we have the desired bound for $w_1(H-z)^{-1}w_2$. 
 \hfill $ \Box $


\subsection{Proof of Corollary \ref{corollary_theorem_0}}
It follows from  Theorem \ref{theorem_0} with (\ref{H-smooth_2}) and (\ref{H_supersmooth_2}).  \hfill $ \Box $


\subsection{Proof of Theorem  \ref{theoremevariante} }
We wish to use  Proposition \ref{proposition_abstract_1} with $ \mathcal A={\mathcal B} = L^{2_*,2} $, $H_0=-\Delta$, $H=-\Delta+Y^*Z$, $ Y:= |V|^{\frac{1}{2}} $ and $ Z := \mbox{sgn}(V) |V|^{\frac{1}{2}}$. Using the condition that $ |V|^{\frac{1}{2}} $ belongs to $ L^{n,\infty} $,  H\"older's inequality (\ref{HolderLorentz}) yields
\begin{align}
\label{proof_theoremevariante}
|| Y g ||_{L^2} + ||Z g ||_{L^2} \leq C ||g ||_{L^{2^*,2}},\quad 
|| Yf ||_{L^{2_*,2}}\le C||f||_{L^2},
\end{align}
so using Proposition \ref{lemma_resolvent_1}, we obtain
$$ || Y R_0 (z) f ||_{L^{2}} +  || Z R_0 (z) f ||_{L^2} \leq C || f ||_{L^{2_*,2}} , $$
for all $ z \in \C \setminus [0,\infty) $ and $ f \in L^2 \cap L^{2_*,2} $ i.e. the conditions (\ref{proposition_abstract_1_1})--(\ref{proposition_abstract_1_4})
are satisfied (uniformly in $z$). The bound (\ref{proposition_abstract_1_5}) follows from  Theorem \ref{theorem_resolvent_2main} (1). Then we obtain 
$$
 |\langle(H-z)\psi,\varphi\rangle|\le C||\psi||_{L^{2_*,2}}||\varphi||_{L^{2_*,2}},\quad \psi,\varphi\in L^{2_*,2}\cap L^2,
 $$
which, together with duality argument (see paragraph \ref{sectionnotationLorentz}), implies the assertion.  \hfill $ \Box $

\subsection{Proof of Theorem \ref{theorem_1}}
Let $H_0,H,Y$ and $Z$ be as in the proof of Theorem  \ref{theoremevariante}. 
Recall that the solution to \eqref{equation_1} is given by $u=e^{-itH}\psi-i\Gamma_HF$. First of all, it was proved by \cite[Theorem 10.1]{KT} that $e^{it\Delta}$ and $\Gamma_{-\Delta}$ satisfy
\begin{align}
\label{proof_theorem_1_1}
||e^{it\Delta}\psi||_{L^p_tL^{q,2}_x}\le C||\psi||_{L^2_x},\quad
||\Gamma_{-\Delta}F||_{L^p_tL^{q,2}_x}\le C||F||_{L^{\tilde p'}_tL^{\tilde q',2}_x}
\end{align}
for any admissible pairs $(p,q)$ and $(\tilde p,\tilde q)$. Also recall that, for any $1\le p,q<\infty$ and any dense subset $\mathcal D\subset L^{q,2}$, simple functions $G:[-T,T]\to \mathcal D$ are dense in $L^p_tL^{q,2}_x$. 

Consider the non-endpoint estimates for $n\ge3$. We shall use Theorem \ref{theorem_inhomogeneous_2} with $\mathcal B:=L^{q',2}$ and $b\equiv1$. (S1$'$) is exactly the first estimate in \eqref{proof_theorem_1_1}. Since $b\equiv1$, (S2$'$) and (S3$'$) follow from Proposition \ref{proposition_application_3} or Theorem \ref{theorem_0} (1), respectively. Theorem \ref{theorem_inhomogeneous_2} thus implies homogeneous estimates:
\begin{align*}
|| e^{-itH} \psi ||_{L^p_t L^{q,2}_x} \le C || \psi ||_{L^2_x}
\end{align*}
for all non-endpoint admissible pair $(p,q)$. Then,  a standard argument using the Christ-Kiselev Lemma and the duality  (see, {\it e.g.}, \cite[Lemma 7.4]{BoTz}) implies the inhomogeneous estimates:
$$
||\Gamma_H F ||_{L^p_t L^{q,2}_x} \le C || F||_{L^{\tilde p'}_t L^{{\tilde q',2}}_x}
$$
for all non-endpoint admissible pairs $(p,q)$ and $(\tilde p ,\tilde  q)$. 

In the case of the endpoint estimate for $n\ge3$ under the additional condition $V\in L^{\frac n2,\infty}$, we shall use Theorem \ref{theorem_inhomogeneous_1} with $\mathcal A={\mathcal B} := L^{2_*,2} $. (S1) follows from \eqref{proof_theorem_1_1}. To derive the condition (S2), we observe that the second estimate in \eqref{proof_theorem_1_1} and its dual estimate, together with \eqref{proof_theoremevariante}, imply that 
\begin{align}
\label{proof_theorem_1_3}
||Y\Gamma_{-\Delta}F||_{L^2_TL^2_x}
+||Y\Gamma_{-\Delta}^*F||_{L^2_TL^2_x}
+||Z\Gamma_{-\Delta}F||_{L^2_TL^2_x}
\le C||F||_{L^2_TL^{2_*,2}_x}.
\end{align}
This estimate, together with Lemma \ref{sansHille}, yield that $Y\Gamma_{-\Delta}F(t)=\overline{Y\Gamma_{-\Delta}}F(t)$, $Y\Gamma_{-\Delta}^*F(t)=\overline{Y\Gamma_{-\Delta}^*}F(t)$ and $Z\Gamma_{-\Delta}F(t)=\overline{Z\Gamma_{-\Delta}}F(t)$ for any simple function $F:[-T,T]\to L^2\cap L^{2_*,2}$ and for a.e. $t\in\R$. In particular, the condition (S2) follows form \eqref{proof_theorem_1_3}. The condition (S3) follows from Theorem \ref{theorem_0} (1) with $w_1=w_2=|V|^{1/2}$ and Corollary \ref{corollary_supersmooth}. The condition (S4) is trivial since $ b \equiv 1 $. Therefore, Theorem \ref{theorem_inhomogeneous_1} together with density and duality arguments implies
$$
|| e^{-itH} \psi ||_{L^2_t L^{2^*,2}_x} \le C || \psi ||_{L^2_x}, \quad || \Gamma_H F ||_{L^2_t L^{2^*,2}_x} \le C || F||_{L^2_t L^{2_*,2}_x}.
$$


When $n=2$, we use the same decomposition for $V$ and Theorem \ref{theorem_inhomogeneous_2} with $ {\mathcal B} = L^{q',2} $. The condition (S1$'$) again follows from \eqref{proof_theorem_1_1}, while the $\Delta$- (resp. $H$-) smoothness of $Y$ (resp. $Z$)  follows from Proposition \ref{lemma_resolvent_3} (resp. Theorem \ref{theorem_0} (2)). Hence Theorem \ref{theorem_inhomogeneous_2} implies the homogeneous estimates. Inhomogeneous estimates are again derived by using the Christ-Kiselev Lemma.  
\hfill $ \Box $

\subsection{Proof of Corollary \ref{corollary_2}}
\label{proof_corollary_2}
Let us set $H_0=-\Delta+V_1$, $H=H_0+V_2$, $Y=\mbox{sgn}(V)|V_2|^{\frac 12}$ and $Z=|V_2|^\frac12$. As in paragraph \ref{realizations}, \eqref{assumption_1_1} and \eqref{Fefferman} imply that both $H$ and $H_0$ are proportional to $-\Delta$ in the sense of forms on $C_0^\infty(\R^n)$ provided $||V||_{M^{\frac n2,\sigma}}$ is small enough. In particular, we see that 
$$
D(H)\cup D(H_0)\subset \mathcal H^1=D(Q_{H_0})=D(Q_H)\subset D(Z)=D(Y),
$$
which, together with the density of $D(Y)=D(Z)$ in $L^2$, implies that $Y$ and $Z$ are relatively  bounded with respect to both $H_0$ and $H$. 

Let us first show the $H$-supersmoothness of $Z$ when $||V||_{M^{\frac n2,\sigma}}$ is sufficiently small. The resolvent identity \eqref{resolventeidentity1} with $(u,v)=(Zf,Zg)$ for $f,g\in D(Z)$ (note that $Z$ is self-adjoint) implies
$$
\langle R_H(z)Zf,Zg\rangle =\langle R_{H_0}(z)Zf,Zg\rangle -\langle ZR_H(z)Zf,YR_{H_0}(\overline z)Zg\rangle.
$$
Theorem \ref{theoremevariante} with $w_1=w_2=|V_2|^\frac12$ for $R_{H_0}(z)$ then shows, for $z\in \C\setminus[0,\infty)$,
$$
||ZR_H(z)Zf||_{L^2}=\sup_{||g||=1}|\langle R_H(z)Zf,Zg\rangle|\le C||V_2||_{M^{\frac n2,\sigma}}(||f||_{L^2}+||ZR_H(z)Zf||_{L^2})
$$
with $C>0$ independent of $V_2$ and $z$. Therefore, taking $||V_2||_{M^{\frac n2,\sigma}}$ small enough, one has 
$$
||ZR(z)Zf||_{L^2}\le C||V_2||_{M^{\frac n2,\sigma}}(1-C||V_2||_{M^{\frac n2,\sigma}})^{-1}||f||_{L^2}
$$
for $f\in D(Z)$ which implies the $H$-supersmoothness since $D(Z) (\supset \mathcal H^1)$ is dense in $L^2$. 

Next we prove the assertion in the non-endpoint case. We use Theorem \ref{theorem_inhomogeneous_2} with $\mathcal B=L^{q',2}$. The conditions (S1$'$) and (S2$'$) follow from Theorem \ref{theorem_1} (1) and Theorem \ref{theorem_0} (1), respectively, while (S3$'$) is an immediate consequence of the $H$-supersmoothness of $Z$. Therefore, Theorem \ref{theorem_inhomogeneous_1} can be applied to obtain non-endpoint Strichartz estimates for $e^{-itH}$. Estimates for the Duhamel operator $\Gamma_H$ again follow from the estimates for $e^{-itH}$ and the Christ-Kiselev lemma. 

In order to derive the assertion for the endpoint case under the smallness of $||V_2||_{L^{\frac n2,\infty}}$, we shall use Theorem \ref{theorem_inhomogeneous_1} with $\mathcal A=\mathcal B=L^{2_*,2}$. The conditions (S1) and (S2) again follow from Theorem \ref{theorem_1} (1) and \eqref{proof_theoremevariante} with $H$ replaced by $H_0$, where we have used the  condition $V_2\in L^{\frac n2,\infty}$ to obtain \eqref{proof_theoremevariante}. (S3) is exactly the $H$-supersmoothness of $Z$, which follows from the same argument as above since $||V_2||_{M^{\frac n2,\sigma}}\le C||V_2||_{L^{\frac n2,\infty}}$. Finally, (S4) is trivial since $b\equiv1$. Thus Theorem \ref{theorem_inhomogeneous_1} gives us the assertion in the endpoint case.
\hfill $ \Box $

\section{A weakly conjugate operator method} \label{sectionweakMourre}
\setcounter{equation}{0}

In this section, we consider operators $ H = - \Delta + V $  with $ V $ satisfying Assumption \ref{assumptionalternative}.  Let us recall the definition of the usual group of dilations 
\begin{align}
\label{dilation}
e^{itA}f(x)=e^{tn/2}f(e^{t}x),
\end{align}
which is the strongly continuous unitary group on $ L^2 (\R^n) $ with generator
\begin{align}
\label{conjugate_operator}
A:=\frac{1}{2i}(x\cdot\nabla+\nabla\cdot x). 
\end{align} 


In Assumption \ref{assumptionalternative}, the condition (1) 
  with $ \ell = 1,2 $ allows to define the commutators
\begin{align*}
[H,iA]&=-2\Delta-x \cdot \nabla V=2H-2V- x \cdot \nabla V,\\
[[H,iA],iA]&=-4\Delta+( x \cdot \nabla )^2V=2[H,iA]+2 x \cdot \nabla  V+( x \cdot \nabla)^2V
\end{align*}
as  sesquilinear forms on $ C_0^{\infty} (\R^n \setminus 0) \times C_0^{\infty} (\R^n \setminus 0) $, i.e.
\begin{align}
 Q_{[H,iA]}(f,g) &:= Q_H (f,iAg) - Q_H (Af, ig) \nonumber \\ 
 &=  Q_H (f,g) -  \langle (2V + x \cdot \nabla V)  f , g \rangle  \label{controleparS},\\
 Q_{[[H,iA],iA]}(f,g) &:=  Q_H (f,iAg) - Q_H (Af, ig)  \nonumber  \\
 &= 2 Q_{[H,iA]}(f,g) +   \langle (2 x \cdot \nabla  V + (x \cdot \nabla V )^2 V)  f , g \rangle . \label{controleparSbis}
\end{align}
The first condition in Assumption \ref{assumptionalternative} (4) implies that 
\begin{align}
\label{proof_lemma_resolvent_5_0}
|Q_{[H,iA]}(f,g)|\le C ||f||_{\mathcal G^1} ||g||_{\mathcal G^1},\quad f,g\in C_0^{\infty}(\R^n \setminus 0),
\end{align}
showing that the sesquilinear form $  Q_{[H,iA]}(f,g )  $ extends from $C_0^{\infty} (\R^n \setminus 0) \times C_0^{\infty} (\R^n \setminus 0)$  to a continuous sesquilinear form on ${\mathcal G}^1\times {\mathcal G}^1$, still denoted by  $Q_{[H,iA]}(f,g)$ (see paragraph \ref{realizations} for $ {\mathcal G}^1 $).  The condition (3) in Assumption \ref{assumptionalternative} implies in particular that
$$ Q_S (f) := Q_{[H,iA]} (f,f) \geq 0 . $$
This allows to define  $ D (S^{1/2}) \subset L^2 $ as the closure of $ C_0^{\infty} (\R^n \setminus 0) $ for the norm $ \big( ||f||_{L^2}^2 + Q_S (f) \big)^{1/2} $. By (\ref{proof_lemma_resolvent_5_0}), $ {\mathcal G}^1 $ is continuously and densely embedded into $ D (S^{1/2}) $. The sesquilinear form $ Q_{[H,iA]} $ then extends continuously to $ D (S^{1/2}) $ and  gives rise to a nonnegative self-adjoint operator $ S : D (S) \rightarrow L^2 $, such that
$$ Q_{[H,iA]} (f,g) = \langle f , S g \rangle, \quad f \in D (S^{1/2}), \ g \in D (S) .$$
Note that the notation $ D (S^{1/2}) $ is unambiguous since this space is exactly the domain of $ \sqrt{S} $ defined by functional calculus of non-negative self-adjoint operators.

The second condition in Assumption \ref{assumptionalternative} (4) (see also (\ref{controleparS}) and (\ref{controleparSbis})) ensures that 
\begin{align}
\label{proof_lemma_resolvent_6_6}
| Q_{ [[H,iA],iA]} (f,f ) \ |\le C Q_{[H,iA]} (f,f)  ,\quad f\in C_0^{\infty} (\R^n \setminus 0),
\end{align}
so, by (\ref{proof_lemma_resolvent_5_0}), the form $ Q_{[[H,iA],iA]}  $ can be extended continuously to $ {\mathcal G}^1 $ on which it still satisfies  (\ref{proof_lemma_resolvent_6_6}).
The estimate (\ref{proof_lemma_resolvent_6_6}) is  technically important  in the proof of the following theorem.

\begin{theorem} \label{theorem_resolvent_7} If $ \kappa > 0 $ is large enough, then $ S^{1/2} (A + i \kappa)^{-1} $ is $H$-supersmooth, that is
$$
 \sup_{z \in \C \setminus \R} \left|\left\langle (A-i\kappa)^{-1} S^{1/2} f , (H-z)^{-1}  (A-i\kappa)^{-1} S^{1/2} g  \right\rangle \right| \leq C ||f||_{L^2} ||g ||_{L^2} ,\quad
f , g \in {\mathcal G}^{1}. 
$$
\end{theorem}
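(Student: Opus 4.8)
The plan is to implement a weakly conjugate operator (Mourre-type) argument adapted to the fact that $S = [H, iA]$ is nonnegative, so that the ``strict Mourre estimate'' holds globally on the whole spectrum without any energy localization, namely $S \geq (S^{1/2})^2$ trivially. First I would fix $\varepsilon > 0$ and work with the regularized resolvent $F_\varepsilon(\lambda) := (H - \lambda - i\varepsilon)^{-1}$ (and its counterpart with $+i\varepsilon$), and introduce the smoothed conjugate operator $A_\kappa := A/(A^2 + \kappa^2)$ or rather the ``dissipative'' family obtained from $(A - i\kappa)^{-1}$; the point of the shift by $\kappa$ is that $(A+i\kappa)^{-1}$ maps into a space where the relevant commutators are controlled, and $\kappa$ large will absorb error terms. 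The key quantity to estimate uniformly in $\varepsilon$ and $\lambda$ is
\[
G_\varepsilon(\lambda) := \left\langle (A-i\kappa)^{-1} S^{1/2} g, \, F_\varepsilon(\lambda) (A-i\kappa)^{-1} S^{1/2} g \right\rangle,
\]
for $g \in \mathcal{G}^1$ (a dense core on which all the form manipulations below are legitimate). The strategy is to differentiate $G_\varepsilon$ along a suitable flow — here the one-parameter group $e^{itA}$ acting by conjugation, or more precisely to run the classical Mourre differential inequality argument for the map $t \mapsto \langle \varphi_t, F_\varepsilon(\lambda) \varphi_t \rangle$ with $\varphi_t$ built from $e^{itA}$ applied to $(A-i\kappa)^{-1}S^{1/2} g$ — and to show that the derivative is controlled by $G_\varepsilon$ itself plus lower-order terms, yielding a Gronwall-type bound.

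The heart of the computation is the commutator identity: formally,
\[
\frac{d}{dt}\, e^{itA} F_\varepsilon(\lambda) e^{-itA} = i\, e^{itA}\, F_\varepsilon(\lambda)\, [H, iA]\, F_\varepsilon(\lambda)\, e^{-itA} = i\, e^{itA}\, F_\varepsilon(\lambda)\, S\, F_\varepsilon(\lambda)\, e^{-itA},
\]
so that, writing $\psi = (A-i\kappa)^{-1} S^{1/2} g$ and $\psi_t = e^{-itA}\psi$, one has $\frac{d}{dt}\langle \psi_t, F_\varepsilon(\lambda)\psi_t \rangle$ equal to a boundary term coming from $[e^{-itA}, A-i\kappa]$-type contributions plus the main term $i \langle S^{1/2} F_\varepsilon(\lambda)\psi_t, S^{1/2} F_\varepsilon(\lambda)\psi_t\rangle = i\|S^{1/2} F_\varepsilon(\lambda)\psi_t\|^2$. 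The positivity of $S$ is what makes this work: the main term has a definite sign, and combined with the elementary inequality $\mathrm{Im}\,\langle \psi, F_\varepsilon(\lambda)\psi\rangle = \varepsilon\|F_\varepsilon(\lambda)\psi\|^2 \ge 0$ one gets a closed differential inequality. The error terms — the double commutator $[[H,iA],iA]$ and the terms where the derivative hits the $(A-i\kappa)^{-1}$ factors — are precisely where Assumption \ref{assumptionalternative} (4) enters: the bound $|Q_{[[H,iA],iA]}(f,f)| \le C Q_S(f)$ together with $|Q_{[H,iA]}(f,g)| \le C\|f\|_{\mathcal{G}^1}\|g\|_{\mathcal{G}^1}$ and $\mathcal{G}^1 \hookrightarrow D(S^{1/2})$ lets one bound all errors by a constant (times $\|g\|^2$) plus a small multiple of $\sup_\lambda |G_\varepsilon(\lambda)|$, the smallness being bought by taking $\kappa$ large. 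Integrating the differential inequality over a suitable $t$-range and then taking $\varepsilon \downarrow 0$ gives the uniform bound.

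The main obstacle — and the place where ``a careful justification of routine arguments'' is genuinely needed, as the authors themselves flag — is the \emph{rigorous} manipulation of these commutator identities given the strong singularity of $V$ at the origin. Concretely: $S^{1/2}$, $A$, and $H$ do not have a common obvious core; $C_0^\infty(\mathbb{R}^n \setminus 0)$ is a form core for each but one must check that the regularized resolvent $F_\varepsilon(\lambda)$ maps this core (or $\mathcal{G}^1$, or $D(S^{1/2})$) back into a space where the next commutator is defined, and that the formal Leibniz-rule computations of $\frac{d}{dt}$ are actually valid (differentiating Bochner integrals, justifying that $t \mapsto \langle \psi_t, F_\varepsilon(\lambda)\psi_t\rangle$ is $C^1$, interchanging $d/dt$ with the inner product). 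The standard device is to insert spectral cutoffs $\chi(H/R)$ and/or $\chi(\varepsilon' A^2)$, do all computations for the cutoff operators where everything is manifestly smooth and bounded, obtain bounds uniform in the cutoffs, and remove them at the end using the density of $\mathcal{G}^1$ and dominated convergence — exactly the pattern used in Lemma \ref{lemmeavecchi} and Proposition \ref{prop-weakDuhamel} elsewhere in the paper. A secondary technical point is controlling the commutator $[A, (A-i\kappa)^{-1}] = 0$ trivially but $[H,(A-i\kappa)^{-1}] = (A-i\kappa)^{-1}[H,A](A-i\kappa)^{-1} = i(A-i\kappa)^{-1} S (A-i\kappa)^{-1}$ in the form sense, and verifying that $(A-i\kappa)^{-1} S^{1/2}$ and $S^{1/2}(A-i\kappa)^{-1}$ extend to bounded operators on $L^2$ — which follows from $\mathcal{G}^1 \hookrightarrow D(S^{1/2})$, the boundedness of $(A-i\kappa)^{-1}$ from $L^2$ into a slightly better space under the dilation structure, and $|x|w \in L^\infty$ in the $n=2$ case (respectively Hardy's inequality in the $n\ge 3$ case) to pass from the $S^{1/2}$-weighted bound to the $|x|^{-1}$- or $w$-weighted bound stated in Theorem \ref{theorem_3}. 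Once Theorem \ref{theorem_resolvent_7} is in hand, that last step is short and is how one deduces Theorem \ref{theorem_3} and then Corollary \ref{Corollaire-Mourre} via \eqref{H_supersmooth_2}.
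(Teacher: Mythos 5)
You have the right framework in outline (the weakly conjugate operator method: the trivial global Mourre estimate coming from $S=[H,iA]\ge 0$, the double-commutator bound $|Q_{[[H,iA],iA]}|\le C\,Q_{[H,iA]}$, the invariance of $\mathcal G^1$ and of $D(S^{1/2})$ under $e^{itA}$ with $\kappa$ large enough, and the need to regularize carefully), but the central mechanism of the proof is missing from your plan. The paper does not run a differential inequality in the flow parameter $t$; it runs it in the coefficient $\epsilon$ of a \emph{dissipative deformation of the operator itself}. One introduces $G_\epsilon(z)=(\tilde H-z-i\epsilon\tilde S)^{-1}$, which is an isomorphism $\mathcal G^{-1}\to\mathcal G^1$ by Lax--Milgram when $\epsilon\,\mathrm{Im}(z)\ge 0$, sets $B=S^{1/2}(A+i\kappa)^{-1}$ and $F_\epsilon(z)=B\,G_\epsilon(z)B^*$, and proves two facts: the a priori bound $\|S^{1/2}G_\epsilon(z)B^*\|\le |\epsilon|^{-1/2}\|F_\epsilon(z)\|^{1/2}$, which is obtained by taking the imaginary part of $\langle(\tilde H-z-i\epsilon\tilde S)u,u\rangle$ and is available \emph{only} because of the extra $-i\epsilon\tilde S$ term; and the derivative formula $\frac{d}{d\epsilon}F_\epsilon=2i\kappa F_\epsilon-S^{1/2}G_\epsilon B^*+BG_\epsilon(S^{1/2})^*-\epsilon BG_\epsilon\tilde S'G_\epsilon B^*$, in which the dilation group is used only to compute the commutator terms (by differentiating $e^{-itA}G_\epsilon(z)e^{itA}$ weakly at $t=0$). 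Together these give $\big\|\frac{d}{d\epsilon}F_\epsilon\big\|\le(2\kappa+C')\|F_\epsilon\|+2|\epsilon|^{-1/2}\|F_\epsilon\|^{1/2}$, which, with the initialization $\|F_\epsilon\|\le C_\kappa^2|\epsilon|^{-1}$, integrates to a bound uniform down to $\epsilon=0$.

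Your $t$-differentiation cannot replace this, for three reasons. First, $F_\varepsilon(\lambda)=(H-\lambda-i\varepsilon)^{-1}$ is not self-adjoint, so your ``main term'' is $\langle S^{1/2}F_\varepsilon(\lambda)^*\psi_t,\,S^{1/2}F_\varepsilon(\lambda)\psi_t\rangle$, not $\|S^{1/2}F_\varepsilon(\lambda)\psi_t\|^2$; it has no sign. Second, even granting positivity, monotonicity of $t\mapsto\langle\psi_t,F_\varepsilon(\lambda)\psi_t\rangle$ yields no uniform bound: there is no value of $t$ at which the quantity is a priori controlled (and $\|\psi_t\|_{\mathcal G^1}$ grows like $e^{c_0|t|}$), whereas in the $\epsilon$-deformation the trivial $O(|\epsilon|^{-1})$ bound at the starting point is exactly what the differential inequality is designed to improve. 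Third, the identity $\mathrm{Im}\,\langle\psi,F_\varepsilon\psi\rangle=\varepsilon\|F_\varepsilon\psi\|^2$ controls $\|F_\varepsilon\psi\|$, not $\|S^{1/2}F_\varepsilon\psi\|$; upgrading to the latter is precisely what the $-i\epsilon\tilde S$ term buys. So the gap is not merely one of technical justification (cutoffs, cores, Bochner integrals), which you correctly flag: it is the key idea of deforming $H-z$ by $-i\epsilon S$ and differentiating in $\epsilon$.
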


This theorem can be seen as a consequence of some version of the weakly conjugate operator method (see \cite{BKM,BoMu,Richard}), in that it only uses the non-negativity of $ [H,iA] $ and the upper bound (\ref{proof_lemma_resolvent_6_6}). Our version is  fairly simpler than in the previous references  for we do not use interpolation spaces, nor even use that $ || S^{1/2} u ||_{L^2} $ defines a norm. The stronger lower bound (3) in Assumption \ref{assumptionalternative}  is only used to obtain Theorem \ref{theorem_3}, i.e. to replace the operator $ (A - i \kappa)^{-1} S^{1/2} $ by the physical weights $ |x|^{-1} $ or  $w$ (when $n=2$).

Before proving Theorem \ref{theorem_resolvent_7}, we show how it implies Theorem \ref{theorem_3}.

\begin{proof}[Proof of Theorem \ref{theorem_3}]
We prove the cases $ n \geq 3 $ and $ n =2 $ simultaneously by setting $ w (x) = |x|^{-1} $ if $ n \geq 3 $; indeed, in Assumption \ref{assumptionalternative} (3), the lower bound in dimension $ n \geq 3$ can be replaced by $ ||\nabla f||_{L^2}^2 + \big| \big|  |x|^{-1} f \big| \big|_{L^2}^2 $ thanks to the Hardy inequality (and up to possibly changing $ \delta_0$). The result will then be clearly a consequence of
\begin{align}
\big| \langle (H-z)^{-1}  w  \varphi , w \psi \rangle \big| \leq C ||\varphi||_{L^2} ||\psi||_{L^2} , \quad z \in {\mathbb C} \setminus {\mathbb R}, \ \ \varphi, \psi \in C_0^{\infty} ({\mathbb R}^n \setminus 0). \label{borneaprouverMourre}
\end{align}
Write first
$$ w \varphi = \lim_{\epsilon \downarrow 0} (A - i \kappa)^{-1} S^{\frac{1}{2}} (S + \epsilon)^{-\frac{1}{2}} (A - i \kappa) w \varphi $$
using that $  (A - i \kappa) w \varphi \in L^2 $ (since $ w \in C^1 ({\mathbb R}^n \setminus 0) $) and that $S$ is a nonnegative self-adjoint operator with no $0$ eigenvalue by Assumption \ref{assumptionalternative} (3). Thanks to Theorem \ref{theorem_resolvent_7}, we have
\begin{align}
 \big| \langle (H-z)^{-1}  w  \varphi , w \psi \rangle \big| \leq C \sup_{\epsilon > 0} ||  (S + \epsilon)^{-\frac{1}{2}} (A - i \kappa) w \varphi||_{L^2} || (S + \epsilon)^{-\frac{1}{2}} (A - i \kappa) w \psi||_{L^2} \label{poidsrigoureux}
\end{align}
where the constant $ C $ is independent of $ z $ and $ \varphi, \psi $.  Note here that  $ (S + \epsilon)^{-\frac{1}{2}} (A - i \kappa) w \varphi  $ does not clearly belong to $  {\mathcal G}^1$ (and likewise with $ \psi $), as is required in Theorem \ref{theorem_resolvent_7}; however for fixed $ \epsilon $, it can be approached by a sequence of $ {\mathcal G}^1 $ which allows to fully justify (\ref{poidsrigoureux}). Then, by writing
$$ (A - i \kappa) w = \left( \frac{\nabla \cdot x}{i} - \frac{n}{2i} - i \kappa \right) w  $$
and using on the other hand that Assumption \ref{assumptionalternative} (3) implies
$$ \big| \big| (S + \epsilon)^{-\frac{1}{2}} \nabla u \big| \big|_{L^2} +\big| \big| (S + \epsilon)^{-\frac{1}{2}} w u \big| \big|_{L^2}  \leq C || u ||_{L^2}, \quad u \in C_0^{\infty} ({\mathbb R}^n \setminus 0) , $$
with $  C $ independent of $ \epsilon $, we see that the right hand side of (\ref{poidsrigoureux}) is bounded by
$$ C \big( ||x w \varphi||_{L^2} +  || \varphi ||_{L^2}  \big) \big(  ||x w \psi ||_{L^2} + || \psi ||_{L^2}  \big) .$$ Since $ |x|w $ is bounded by assumption, this yields (\ref{borneaprouverMourre}). \end{proof}

\begin{proof}[Proof of Corollary \ref{Corollaire-Mourre}]
It follows from Theorem \ref{theorem_3} together with (\ref{H-smooth_2}) and (\ref{H_supersmooth_2}).
\end{proof}

The rest of the section is devoted to the proof of Theorem \ref{theorem_resolvent_7}.  We let $ {\mathcal G}^{-1} $ be the (anti)dual of $ {\mathcal G}^1 $, i.e. the space of continuous conjugate linear  forms on $ {\mathcal G}^1 $. To avoid any ambiguity, we denote by $ \langle u , f \rangle_{{\mathcal G}^{-1},{\mathcal G}^1}  := u (f)$ the duality between $ u \in {\mathcal G}^{-1} $ and $ f \in {\mathcal G}^1 $ (it is linear in $u$ and conjugate linear in $ f $). We keep the notation $ \langle \cdot , \cdot \rangle $ for the inner product on $L^2$ only.

   Then, we define three linear continuous operators $ \tilde{H} , \tilde{S} , \tilde{S}^{\prime} : {\mathcal G}^1 \rightarrow {\mathcal G}^{-1} $ by
\begin{align}
 \tilde{H} f := Q_H (f,\cdot), \quad \tilde{S} f := Q_{[H,iA]}(f,\cdot), \quad \tilde{S}^{\prime} = Q_{[[H,iA],iA]} (f,\cdot) \quad f \in {\mathcal G}^1 .  \label{introductionSprime}
\end{align}
The operators $ \tilde{H} $ and $ \tilde{S} $ are  respectively  extensions of $ H $ and $ S $ to $ {\mathcal G}^1 $, in the sense that
$$ \tilde{H} f = H f \quad \mbox{if} \ f \in  D (H), \quad \tilde{S} f = S f \quad \mbox{if} \ f \in {\mathcal G}^1 \cap D (S) $$
or, to be completely rigorous, $ \tilde{H}f = \langle Hf , \cdot \rangle $ and $ \tilde{S}f = \langle S f , \cdot \rangle $ respectively. 
\begin{proposition} \label{propinversibilite} Let $ z \in \C \setminus \R $. Let $ \epsilon \in \R $ such that $ \epsilon {\emph Im}(z) \geq 0 $ (i.e. either $ \epsilon = 0 $ or $ \epsilon $ and $  {\emph Im}(z) $ have the same strict sign). Then
$$ \tilde{H} - z - i \epsilon \tilde{S} : {\mathcal G}^1 \rightarrow {\mathcal G}^{-1} $$
  is an isomorphism.  The multiplication by $z$ means $ f \mapsto \langle z f , . \rangle = z \langle f , . \rangle $.
\end{proposition}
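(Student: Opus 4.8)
The plan is to prove invertibility by establishing a coercivity-type lower bound for the quadratic form associated to $\tilde H - z - i\epsilon\tilde S$ on $\mathcal G^1$, and then invoke Lax–Milgram. Fix $z = \lambda + i\mu$ with $\mu \neq 0$, and let $\epsilon$ have the same sign as $\mu$ (or $\epsilon = 0$). For $f \in \mathcal G^1$, consider
$$
\langle (\tilde H - z - i\epsilon\tilde S) f , f \rangle_{\mathcal G^{-1},\mathcal G^1} = Q_H(f,f) - z\|f\|_{L^2}^2 - i\epsilon\, Q_{[H,iA]}(f,f).
$$
Taking real and imaginary parts, $\mathrm{Im}$ of this quantity equals $-\mu\|f\|_{L^2}^2 - \epsilon\, Q_S(f)$, which (since $\mu$ and $\epsilon$ have the same sign and $Q_S \geq 0$ by Assumption \ref{assumptionalternative} (3)) has modulus at least $|\mu|\,\|f\|_{L^2}^2$; meanwhile $\mathrm{Re}$ gives $Q_H(f,f) - \lambda\|f\|_{L^2}^2$, and $Q_H(f,f) \geq \delta_0\|\nabla f\|_{L^2}^2 \gtrsim \|f\|_{\mathcal G^1}^2 - \|f\|_{L^2}^2$ using $\mathcal G^1 \hookrightarrow \mathcal H^1$. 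First I would combine these: if $|\lambda| \leq |\mu|$ one gets a lower bound directly; in general one uses a convex combination $\theta\,\mathrm{Re} + (1-\theta)(\pm\mathrm{Im})$ with $\theta$ small depending on $\lambda,\mu$ to absorb the $\lambda\|f\|_{L^2}^2$ term into the $|\mu|\|f\|_{L^2}^2$ term, yielding
$$
\big|\langle (\tilde H - z - i\epsilon\tilde S) f , f \rangle_{\mathcal G^{-1},\mathcal G^1}\big| \geq c(z)\,\|f\|_{\mathcal G^1}^2, \qquad f \in \mathcal G^1,
$$
with $c(z) > 0$. This coercivity, together with the continuity of the sesquilinear form $(f,g) \mapsto Q_H(f,g) - z\langle f,g\rangle - i\epsilon\, Q_{[H,iA]}(f,g)$ on $\mathcal G^1 \times \mathcal G^1$ — which is exactly where \eqref{proof_lemma_resolvent_5_0} is needed to bound the $Q_{[H,iA]}$ term, and $\mathcal G^1 \hookrightarrow \mathcal H^1$ plus local integrability of $V$ controls $Q_H$ — allows one to apply the Lax–Milgram theorem (in its sesquilinear-form version for a possibly non-symmetric, non-Hermitian but bounded and coercive form) to conclude that $\tilde H - z - i\epsilon\tilde S$ is a bounded bijection from $\mathcal G^1$ onto $\mathcal G^{-1}$ with bounded inverse.

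Two points require care. First, the case $\epsilon = 0$ is the classical fact that $H - z$ is an isomorphism $\mathcal G^1 \to \mathcal G^{-1}$ for $z \notin \mathbb R$ (equivalently $\sigma(H) \subset [0,\infty)$), which follows from self-adjointness of $H$; I would either cite this or note it falls out of the same coercivity argument with $\epsilon = 0$. Second, and more delicately, when $\epsilon \neq 0$ one must be sure the form $Q_{[H,iA]}$ is genuinely continuous on all of $\mathcal G^1$ (not merely defined on $C_0^\infty(\mathbb R^n\setminus 0)$); this is guaranteed by \eqref{proof_lemma_resolvent_5_0} and the density of $C_0^\infty(\mathbb R^n\setminus 0)$ in $\mathcal G^1$, so the extension $\tilde S$ in \eqref{introductionSprime} is well-defined and bounded $\mathcal G^1 \to \mathcal G^{-1}$.

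The main obstacle I anticipate is not the abstract functional-analytic step but verifying the coercivity uniformly enough and handling the sign bookkeeping correctly: one must check that the convex-combination trick really produces a strictly positive lower constant for every $z \notin \mathbb R$ and every admissible $\epsilon$ (including $\epsilon$ unbounded), and that no cancellation between the $-i\epsilon Q_S$ term and the $-i\mu\|f\|^2$ term can occur — which is precisely why the hypothesis $\epsilon\,\mathrm{Im}(z) \geq 0$ is imposed, ensuring the two imaginary contributions add rather than cancel. Once that sign structure is in place, the estimate $Q_S(f) \geq 0$ (no need for $Q_S$ to define a norm, nor for any Mourre-type strict positivity) is all that is used, and Lax–Milgram finishes the argument.
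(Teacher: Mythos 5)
Your proposal is correct and follows essentially the same route as the paper: take real and imaginary parts of $\langle (\tilde H - z - i\epsilon\tilde S)f,f\rangle_{{\mathcal G}^{-1},{\mathcal G}^1}$, use that $\epsilon\,\mathrm{Im}(z)\ge 0$ and $Q_S\ge 0$ make the two imaginary contributions add so that $\|f\|_{L^2}^2\le |\mathrm{Im}(z)|^{-1}|\langle\cdots\rangle|$, combine with the real part to get coercivity in the $\mathcal G^1$ norm (with a constant independent of $\epsilon$), and conclude by Lax--Milgram, the continuity of the form coming from \eqref{proof_lemma_resolvent_5_0}. One small remark: the step ``$Q_H(f,f)\ge\delta_0\|\nabla f\|_{L^2}^2\gtrsim\|f\|_{\mathcal G^1}^2-\|f\|_{L^2}^2$'' does not follow from the embedding $\mathcal G^1\hookrightarrow\mathcal H^1$ (which goes the other way), but it is unnecessary since $\|f\|_{\mathcal G^1}^2-\|f\|_{L^2}^2=Q_H(f,f)$ holds by the very definition of the $\mathcal G^1$ norm.
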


\begin{proof}
Let us assume e.g. that $ \mbox{Im}(z)> 0 $ and $ \epsilon \geq 0 $. Then, for all $ f \in {\mathcal G}^1 $, one has
\begin{align}
  \mbox{Re} \langle (\tilde{H} - z - i \epsilon \tilde{S}) f , f \rangle_{{\mathcal G}^{-1},{\mathcal G}^1} &= Q_H (f,f) - \mbox{Re}(z) ||f||_{L^2}^2  \nonumber \\
 -  \mbox{Im} \langle (\tilde{H} - z - i \epsilon \tilde{S}) f , f \rangle_{{\mathcal G}^{-1},{\mathcal G}^1} &=  \mbox{Im}(z)||f||_{L^2}^2 + \epsilon Q_{[H,iA]}(f,f) \ \geq \ \mbox{Im}(z)||f||_{L^2}^2 . \label{partieimaginaire}
\end{align}
 Plugging the estimate of the second line in the first line implies easily the coercivity estimate
$$ ||f||_{{\mathcal G}^1}^2 = Q_H (f,f) + ||f||_{L^2}^2 \leq \left( 1 + \frac{ |{\rm Re}(z)|+1}{\mbox{Im}(z)}\right) \left| \langle (\tilde{H} - z - i \epsilon \tilde{S}) f , f \rangle_{{\mathcal G}^{-1},{\mathcal G}^1} \right|  . $$
 One then has the expected bijectivity by an application of the Lax-Milgram Theorem to the sesquilinear form $ (f,g) \mapsto \langle (\tilde{H} - z - i \epsilon \tilde{S}) f , g \rangle_{{\mathcal G}^{-1},{\mathcal G}^1} = Q_H (f,g) - i \epsilon Q_{[H,iA]}(f,g) - z \langle f , g \rangle $.
 \end{proof}

This proposition allows to consider $ G_{\epsilon} (z) := ( \tilde{H} - z - i \epsilon \tilde{S} )^{-1}  $. We record that, upon the identification of any $f \in L^2 $ with the form $ \scal{f,\cdot} $ which belongs to $ {\mathcal G}^{-1} $, one has for $ \epsilon = 0 $
$$ G_0 (z) f = (H-z)^{-1} f, \quad f \in L^2 , $$
which simply follows from the fact that $ Q_H \big((H-z)^{-1} f,g \big) = \langle H (H-z)^{-1} f  , g \rangle $ for any $ g \in {\mathcal G}^1 $. Also, it is useful and not hard to check that $ G_{\epsilon} (z) $ and $ G_{-\epsilon} (\bar{z}) $ are adjoint to each other in the precise sense that,   for all $u,v \in {\mathcal G}^{-1} $,
\begin{align}
  \langle  u ,  \ G_{\epsilon} (z) v \rangle_{{\mathcal G}^{-1}, {\mathcal G}^1} = \overline{  \langle  v ,  \ G_{-\epsilon} (\bar{z}) u \rangle_{{\mathcal G}^{-1}, {\mathcal G}^1}  } . \label{definitionpreciseadjoint}
\end{align}  
To derive (\ref{definitionpreciseadjoint}), it suffices to write $ u  = \big( \tilde{H} - \bar{z} + i \epsilon \tilde{S} \big) G_{-\epsilon} (\bar{z}) u $ and  to use the symmetry of  $ Q_H $ and $ Q_{[H,iA]} $. 
We also record at this stage the useful formula 
 \begin{align}
 \frac{d}{d \epsilon} G_{\epsilon} (z) = G_{\epsilon}(z) i \tilde{S} G_{\epsilon}(z)   \label{formulederiveeG}
\end{align}
which follows from the differentiability of $ \epsilon \mapsto \tilde{H} - z - i \epsilon \tilde{S} $    in operator norm.

 \begin{proposition} \label{bornedissipative} Let $ B : {\mathcal G}^1 \rightarrow L^2 $ be a bounded linear map. Then, for $ \epsilon {\emph Im} (z) > 0 $,
 $$ \big| \big| S^{1/2} G_{\epsilon} (z) B^*  \big| \big|_{L^2 \rightarrow L^2} \leq |\epsilon|^{- \frac{1}{2}} \big| \big| B G_{\epsilon} (z) B^*  \big| \big|_{L^2 \rightarrow L^2 }^{\frac{1}{2}} .  $$
 \end{proposition}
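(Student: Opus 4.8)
The plan is to exploit the formula (\ref{formulederiveeG}) together with the positivity encoded in (\ref{partieimaginaire}). First I would fix $f \in L^2$ and set $u_\epsilon := G_\epsilon(z) B^* f \in \mathcal{G}^1$. The key quantity to control is $\langle \tilde{S} u_\epsilon, u_\epsilon\rangle_{\mathcal{G}^{-1},\mathcal{G}^1} = Q_{[H,iA]}(u_\epsilon,u_\epsilon) = \|S^{1/2} u_\epsilon\|_{L^2}^2$ (the last equality being the definition of $D(S^{1/2})$ and the extension of $Q_{[H,iA]}$ to it; here one needs $u_\epsilon \in D(S^{1/2})$, which holds since $\mathcal{G}^1 \hookrightarrow D(S^{1/2})$ continuously as recorded after (\ref{proof_lemma_resolvent_5_0})). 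Assume without loss of generality $\mathrm{Im}(z) > 0$ and $\epsilon > 0$ (the other sign being symmetric). The identity (\ref{partieimaginaire}), applied to $u_\epsilon$ but keeping the $Q_{[H,iA]}$ term rather than discarding it, gives
$$
\mathrm{Im}(z)\|u_\epsilon\|_{L^2}^2 + \epsilon\, Q_{[H,iA]}(u_\epsilon,u_\epsilon) = -\mathrm{Im}\,\langle (\tilde{H}-z-i\epsilon\tilde{S})u_\epsilon, u_\epsilon\rangle_{\mathcal{G}^{-1},\mathcal{G}^1} = -\mathrm{Im}\,\langle B^*f, u_\epsilon\rangle = -\mathrm{Im}\,\langle f, Bu_\epsilon\rangle.
$$

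From here, both terms on the left are nonnegative, so in particular $\epsilon\, \|S^{1/2}u_\epsilon\|_{L^2}^2 \le |\langle f, B u_\epsilon\rangle| \le \|f\|_{L^2}\, \|B u_\epsilon\|_{L^2}$. Now I would bound $\|B u_\epsilon\|_{L^2}$: since $B u_\epsilon = B G_\epsilon(z) B^* f$, we get $\|B u_\epsilon\|_{L^2} \le \|B G_\epsilon(z) B^*\|_{L^2\to L^2}\,\|f\|_{L^2}$. Combining, $\epsilon\,\|S^{1/2} G_\epsilon(z) B^* f\|_{L^2}^2 \le \|B G_\epsilon(z) B^*\|_{L^2\to L^2}\,\|f\|_{L^2}^2$, which upon dividing by $\epsilon$ and taking square roots over all $f$ with $\|f\|_{L^2}=1$ is exactly the claimed bound (with $|\epsilon|$ in place of $\epsilon$ once the sign convention is dropped).

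The only genuinely delicate point is the justification that $Q_{[H,iA]}(u_\epsilon,u_\epsilon) = \|S^{1/2}u_\epsilon\|_{L^2}^2$ and that all the pairings are legitimate: one must check that $B^* f \in \mathcal{G}^{-1}$ (immediate since $B : \mathcal{G}^1 \to L^2$ is bounded, so $B^* : L^2 \to \mathcal{G}^{-1}$), that $u_\epsilon = G_\epsilon(z)B^*f$ makes sense by Proposition \ref{propinversibilite} (its hypothesis $\epsilon\,\mathrm{Im}(z) \ge 0$ is satisfied), and that $\langle B^* f, u_\epsilon\rangle_{\mathcal{G}^{-1},\mathcal{G}^1} = \langle f, B u_\epsilon\rangle_{L^2}$, which is just the definition of the adjoint $B^*$. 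I expect no real obstacle here beyond bookkeeping with the $\mathcal{G}^1/\mathcal{G}^{-1}$ duality; the proof is essentially a one-line dissipativity estimate once the functional-analytic framework of Propositions \ref{propinversibilite} and the forms $\tilde H, \tilde S$ is in place. A minor subtlety worth a sentence: the displayed identity requires $\mathrm{Im}$ of the left-hand pairing to equal $-\mathrm{Im}(z)\|u_\epsilon\|^2 - \epsilon Q_{[H,iA]}(u_\epsilon,u_\epsilon)$ with the correct sign, which is precisely the computation in (\ref{partieimaginaire}) with the $Q_{[H,iA]}$ term retained rather than estimated from below by $0$.
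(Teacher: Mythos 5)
Your proposal is correct and follows essentially the same route as the paper's proof: both identify $\big\| S^{1/2} G_{\epsilon}(z) B^* f \big\|_{L^2}^2$ with $Q_{[H,iA]}(u_\epsilon,u_\epsilon)$ for $u_\epsilon = G_\epsilon(z)B^*f \in \mathcal{G}^1$, invoke (\ref{partieimaginaire}) with the $Q_{[H,iA]}$ term retained to get $\epsilon\, Q_{[H,iA]}(u_\epsilon,u_\epsilon) \le -\mathrm{Im}\,\langle B^*f, u_\epsilon\rangle_{\mathcal{G}^{-1},\mathcal{G}^1} = -\mathrm{Im}\,\langle f, B G_\epsilon(z)B^* f\rangle$, and conclude by Cauchy--Schwarz and division by $|\epsilon|$. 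No changes are needed.
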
   
 
\begin{proof}
Let $ f \in L^2 $. Denoting for simplicity $ B^* f $ instead of $ B^* ( \langle f , \cdot \rangle ) $, one has
 \begin{align*}
   \big| \big| S^{1/2} G_{\epsilon} (z) B^* f \big| \big|_{L^2 }^2 &= Q_{[H,iA]} (G_{\epsilon}(z)B^* f , G_{\epsilon}(z) B^* f) \\ 
   &\le \frac{1}{\epsilon} \Big(  \epsilon Q_{[H,iA]} (G_{\epsilon}(z)  B^* f , G_{\epsilon}(z) B^* f )  + \mbox{Im}(z) || G_{\epsilon}(z) B^* f ||^2 \Big) ,
 \end{align*}
 where, according to (\ref{partieimaginaire}), the parentheses in the second line is 
 $$ - \mbox{Im} \langle (\tilde{H} - z - i \epsilon \tilde{S}) G_{\epsilon}(z) B^* f , G_{\epsilon}(z) B^* f \rangle_{{\mathcal G}^{-1},{\mathcal G}^1} = - \mbox{Im} \langle B^* f , G_{\epsilon}(z) B^* f \rangle_{{\mathcal G}^{-1},{\mathcal G}^1} . $$
 Thus,  $\big| \big| S^{1/2} G_{\epsilon} (z) B^* f \big| \big|_{L^2 }^2 \leq |\epsilon|^{-1} |\langle f , B G_{\epsilon} (z) B^* f \rangle | $ yields the result. 
\end{proof}

 

To prepare all the material needed to follow the usual differential inequality technique of Mourre, we need a technical result.

\begin{proposition}
\label{lemma_resolvent_6}
{\rm (1)} For all $t\in\R$, $e^{itA}$ leaves $\mathcal G^1$ invariant and there exists $c_0\ge0$ such that 
$$ ||e^{itA}f ||_{{\mathcal G}^1}\le e^{c_0|t|} ||f ||_{{\mathcal G}^1}, $$ for all $ t\in\R $ and $ f \in {\mathcal G}^1 $.
In particular, for $ | \kappa | > c_0 $, $ (A+i\kappa)^{-1} $ maps $ {\mathcal G}^1 $ into itself continuously.\\
{\rm (2)} There exists $c_1\ge0$ such that, for all $f \in {\mathcal G}^1 $ and $ t \in \R $,
$$ \big| \big| S^{1/2} e^{itA} f \big| \big|_{L^2}\le e^{c_1|t|} || S^{1/2}f ||_{L^2} .$$
In particular, for $ |\kappa| > \max (c_0,c_1) $, there exists $ C_{\kappa} $ such that
\begin{align}
  || S^{1/2} (A+i\kappa)^{-1} f ||_{L^2} \leq C || S^{1/2} f ||_{L^2}, \quad f \in {\mathcal G}^1 .  \label{pourinitialisation}
\end{align}  
\end{proposition}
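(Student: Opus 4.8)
The plan is to prove the two exponential bounds by differentiating appropriate quadratic forms along the dilation flow and applying Gr\"onwall's lemma, and then to recover the resolvent statements from the Laplace representation of $(A+i\kappa)^{-1}$ in terms of $e^{itA}$. For (1), I would fix $f\in C_0^\infty(\R^n\setminus 0)$, note that $e^{itA}f(x)=e^{tn/2}f(e^tx)$ again lies in $C_0^\infty(\R^n\setminus 0)$ with supports contained in a fixed compact subset of $\R^n\setminus 0$ for $t$ in any bounded interval, and set $\phi(t):=||e^{itA}f||_{{\mathcal G}^1}^2=||f||_{L^2}^2+Q_H(e^{itA}f,e^{itA}f)$, using that $e^{itA}$ is unitary on $L^2$. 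The key claim is $\phi'(t)=Q_{[H,iA]}(e^{itA}f,e^{itA}f)$, which I would establish without ever differentiating the singular $V$: directly from the definition of $Q_H$,
\[
  Q_H(e^{itA}f,e^{itA}f)=e^{2t}||\nabla f||_{L^2}^2+\int_{\R^n}V(x)\,e^{tn}|f(e^tx)|^2\,dx ,
\]
where in the last integral $V(x)$ is frozen and $e^{tn}|f(e^tx)|^2$ is $C^\infty$ in $(t,x)$ with compact support in $x$ uniformly for bounded $t$; hence one may differentiate under the integral, and since $\partial_t\big(e^{tn}|f(e^tx)|^2\big)=\nabla_x\cdot\big(x\,|e^{itA}f(x)|^2\big)$, an integration by parts — which is exactly the distributional meaning of $\langle(x\cdot\nabla V)g,g\rangle$ recalled in paragraph \ref{realizations} — gives $\phi'(t)=2e^{2t}||\nabla f||_{L^2}^2-\langle(x\cdot\nabla V)e^{itA}f,e^{itA}f\rangle=Q_{[H,iA]}(e^{itA}f,e^{itA}f)$ in view of \reff{controleparS}. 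Then \reff{proof_lemma_resolvent_5_0} yields $|\phi'(t)|\le C\,||e^{itA}f||_{{\mathcal G}^1}^2=C\,\phi(t)$, so $\phi(t)\le e^{C|t|}\phi(0)$, which is the asserted bound with $c_0=C/2$, first on $C_0^\infty(\R^n\setminus 0)$ and then, by density in ${\mathcal G}^1$ together with the continuous embedding ${\mathcal G}^1\hookrightarrow L^2$, on all of ${\mathcal G}^1$ (in particular $e^{itA}$ preserves ${\mathcal G}^1$). For the last part I would use $(A+i\kappa)^{-1}=\mp i\int_0^\infty e^{-|\kappa|t}e^{\pm itA}\,dt$ (sign according to $\mathrm{sgn}\,\kappa$) as a Bochner integral: for $|\kappa|>c_0$ and $f\in{\mathcal G}^1$ the integrand is ${\mathcal G}^1$-continuous with ${\mathcal G}^1$-norm $\le e^{-(|\kappa|-c_0)t}||f||_{{\mathcal G}^1}$, so the integral converges in ${\mathcal G}^1$ and, by continuity of ${\mathcal G}^1\hookrightarrow L^2$, represents the same vector as in $L^2$; hence $(A+i\kappa)^{-1}f\in{\mathcal G}^1$ with norm $\le(|\kappa|-c_0)^{-1}||f||_{{\mathcal G}^1}$.

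For (2) the argument runs in parallel, with $Q_H$ replaced by $Q_{[H,iA]}$. For $f\in C_0^\infty(\R^n\setminus 0)$ I would set $\psi(t):=Q_{[H,iA]}(e^{itA}f,e^{itA}f)=||S^{1/2}e^{itA}f||_{L^2}^2\ge 0$. Since $[H,iA]=-2\Delta-x\cdot\nabla V$ as a form on $C_0^\infty(\R^n\setminus 0)$, the same computation — write $\psi(t)=2e^{2t}||\nabla f||_{L^2}^2-\int(x\cdot\nabla V)(e^{-t}x)|f(x)|^2\,dx$, change variables to freeze the singular factor $x\cdot\nabla V$, differentiate under the integral and integrate by parts — gives $\psi'(t)=Q_{[[H,iA],iA]}(e^{itA}f,e^{itA}f)$ in the sense of \reff{controleparSbis}; by \reff{proof_lemma_resolvent_6_6} this is $\le C\,Q_{[H,iA]}(e^{itA}f,e^{itA}f)=C\,\psi(t)$, so $\psi(t)\le e^{C|t|}\psi(0)$ (using $\psi\ge 0$), i.e. the claim with $c_1=C/2$, extended to $f\in{\mathcal G}^1$ via part (1) and the continuous embedding ${\mathcal G}^1\hookrightarrow D(S^{1/2})$. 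Finally, for $|\kappa|>\max(c_0,c_1)$, part (1) and the Laplace formula give $(A+i\kappa)^{-1}f\in{\mathcal G}^1\subset D(S^{1/2})$, and applying the bounded operator $S^{1/2}:D(S^{1/2})\to L^2$ under the integral yields $||S^{1/2}(A+i\kappa)^{-1}f||_{L^2}\le\int_0^\infty e^{-|\kappa|t}e^{c_1 t}||S^{1/2}f||_{L^2}\,dt=(|\kappa|-c_1)^{-1}||S^{1/2}f||_{L^2}$, which is \reff{pourinitialisation}.

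The main obstacle is the rigorous justification of the two differential identities $\tfrac{d}{dt}Q_H(e^{itA}f,e^{itA}f)=Q_{[H,iA]}(e^{itA}f,e^{itA}f)$ and $\tfrac{d}{dt}Q_{[H,iA]}(e^{itA}f,e^{itA}f)=Q_{[[H,iA],iA]}(e^{itA}f,e^{itA}f)$: since $V$ and $x\cdot\nabla V$ are only assumed locally in $L^{n/2,\infty}$ (or $L^1_{\mathrm{loc}}$ when $n=2$), they cannot be differentiated pointwise in the dilation parameter. The device that resolves this is to keep the singular coefficient frozen (after the obvious change of variables) and transfer the $t$-derivative onto the smooth, compactly supported function $f$ through $\partial_t\big(e^{tn}|f(e^tx)|^2\big)=\nabla\cdot\big(x\,|e^{itA}f(x)|^2\big)$, which is by definition the pairing against $x\cdot\nabla(\cdot)$ in the distributional sense adopted in paragraph \ref{realizations}. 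Once these identities are in place, the remaining ingredients — Gr\"onwall, the density and embedding arguments, and the Laplace representation of $(A+i\kappa)^{-1}$ — are entirely routine.
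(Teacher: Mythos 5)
Your proof is correct and follows the paper's overall skeleton — differentiate the quadratic form along the dilation flow, apply Gr\"onwall, extend by density, and use the Laplace representation of $(A+i\kappa)^{-1}$ — but your justification of the crucial identities $\tfrac{d}{dt}Q_H(e^{itA}f,e^{itA}f)=Q_{[H,iA]}(e^{itA}f,e^{itA}f)$ and its analogue for $Q_{[H,iA]}$ is genuinely different from, and simpler than, the paper's. The paper expands the difference quotient $Q(t)-Q(0)$ into two cross terms (differentiated using that $t\mapsto e^{itA}f$ is $\mathcal G^1$-differentiable for $f\in C_0^\infty(\R^n\setminus 0)$) plus the quadratic remainder $Q_H(e^{itA}f-f,e^{itA}f-f)$, which it must then show is $O(t^2)$; that remainder estimate is precisely where the case distinction $n\ge3$ versus $n=2$ and the local integrability hypotheses on $V$ enter (estimates (\ref{proof_lemma_resolvent_6_2})--(\ref{proof_lemma_resolvent_6_5})). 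You instead freeze the singular coefficient, use that $\partial_t\big(e^{tn}|f(e^tx)|^2\big)=\nabla_x\cdot\big(x\,|e^{itA}f(x)|^2\big)$ is smooth with support in a fixed compact subset of $\R^n\setminus 0$ for bounded $t$, differentiate under the integral by dominated convergence (only $V,\ x\cdot\nabla V\in L^1_{\rm loc}(\R^n\setminus 0)$ is needed, which both cases of Assumption \ref{assumptionalternative} (1) supply), and then invoke the distributional definition of $\langle(x\cdot\nabla V)g,g\rangle$ adopted in paragraph \ref{realizations} — so no actual integration by parts is required and the $n\ge3$/$n=2$ dichotomy disappears. The remaining ingredients (Gr\"onwall via (\ref{proof_lemma_resolvent_5_0}) and (\ref{proof_lemma_resolvent_6_6}), density, and pushing $S^{1/2}$ through the Bochner integral via the continuous embedding $\mathcal G^1\hookrightarrow D(S^{1/2})$) coincide with the paper's. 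The only point worth making explicit is that in part (2) you apply (\ref{proof_lemma_resolvent_6_6}) to $e^{itA}f$ rather than to $f$, which is legitimate because $C_0^\infty(\R^n\setminus 0)$ is stable under the dilation group.
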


\begin{proof}
Let $f\in C_0^{\infty}(\R^n \setminus 0)$.  Since $ C_0^{\infty}(\R^n \setminus 0)$ is stable by $ e^{itA}$  by the explicit formula \eqref{dilation}, the quantity $Q(t):= Q_H (e^{itA}f,e^{itA}f)$ is well defined. We shall check its differentiability in $t$. To this end, it suffices to check the differentiability at $t=0$ by the group property of $e^{itA}$. We compute
\begin{equation}
\begin{aligned}
\label{proof_lemma_resolvent_6_1}
Q(t) - Q(0)=Q_H  (e^{itA}f-f,e^{itA}f-f )+ Q_H( f,e^{itA}f-f\ )+ Q_H ( e^{itA}f-f,f ),
\end{aligned}
\end{equation}
where the second and third terms of the right hand side satisfy
\begin{align}
\label{proof_lemma_resolvent_6_1_1}
\frac{d}{dt}\Big( Q_H( f,e^{itA}f-f )+ Q_H ( e^{itA}f-f,f )\Big)\Big|_{t=0}= Q_{[H,iA]}( f,f ), 
\end{align}
since $t \mapsto e^{itA} f $ is differentiable as a $ {\mathcal G}^1 $ valued map.
Next we shall show 
\begin{align}
\label{proof_lemma_resolvent_6_1_2}
Q_H(e^{itA}f-f,e^{itA}f-f )=O(t^2),\quad |t|\le1. 
\end{align}
To treat the gradient term in $ Q_H $, we use the representation $e^{itA}f-f=\int_0^tiAe^{isA}fds$ to see
\begin{equation}
\begin{aligned}
\label{proof_lemma_resolvent_6_2}
\big| \big| \nabla (e^{itA}f-f) \big| \big|_{L^2}^2
\le |t|^2\sup_{|t|\le1} \big| \big| \nabla Ae^{itA}f \big| \big|_{L^2}^2
\le C|t|^2 ||\nabla Af||_{L^2}^2,
\end{aligned}
\end{equation}
where in the last line we have used the formula $e^{-itA}\nabla e^{itA}=e^t\nabla$ and the fact $||e^{itA}||_{\mathbb B(L^2)}=1$. For the potential term, we consider two cases $n\ge3$ or $n=2$ separately. Suppose $n\ge3$ and $V\in L^{\frac n2,\infty}_{\rm loc}$.  Let $K \Subset \R^n$ contain $ {\rm supp} (e^{itA}f)$ for $ |t| \leq 1 $. By (\ref{HolderLorentz}) and (\ref{SobolevLorentz})
\begin{equation}
\begin{aligned}
\label{proof_lemma_resolvent_6_3}
|\langle V(e^{itA}f-f),e^{itA}f-f\rangle|
&\le C \big| \big| {\mathds 1}_K |V|^{\frac12} \big| \big|_{L^{n,\infty}}^2 \big| \big| \nabla (e^{itA}f-f) \big| \big|_{L^2}^2\\
&\le C|t|^2 \big| \big| {\mathds 1}_K |V|^{\frac12} \big| \big|_{L^{n,\infty}}^2 ||\nabla Af||_{L^2}^2 
\end{aligned}
\end{equation}
from which (\ref{proof_lemma_resolvent_6_1_2}) follows.
Next we let $n=2$ and decompose  $V=V_1+V_2$ with $V_1\in L^1_{\rm loc}$ and $r^2V_2\in L^\infty_{\rm loc}$. By H\"older's inequality, the first potential satisfies
\begin{align}
\label{proof_lemma_resolvent_6_4}
|\langle V_1(e^{itA}f - f),e^{itA}f - f\rangle |\le ||V||_{L^1(K)} ||e^{itA}f-f||_{L^\infty(K)}^2\le C ||V||_{L^1(K)}|t|^2 ||Af||_{L^\infty(K)}^2.
\end{align}
For the second potential, since $A=-ix\cdot\nabla-i$ and $e^{-itA}|x|^{-1}e^{itA}=e^{t}|x|^{-1}$, we have
$$
|x|^{-1} (e^{itA}f- f ) = i \int_0^t e^s e^{isA} \big(|x|^{-1}A f \big)ds
$$
so that $ \big| \big| |x|^{-1} (e^{itA}f- f ) \big| \big|_{L^2} \le C |t| \big( \big| \big| | x|^{-1} f \big| \big|_{L^2} + || \nabla f  ||_{L^2} \big) $ for $ |t| \leq 1 $ and hence
\begin{align}
\label{proof_lemma_resolvent_6_5}
|\langle V_2e^{itA}f,e^{itA}f\rangle |
\le C|t|^2 \big| \big| |x|^2 V_2 \big| \big|_{L^\infty(K)}(||\nabla f||_{L^2}^2+ \big| \big| |x|^{-1}f \big| \big|_{L^2}^2). 
\end{align}
Then \eqref{proof_lemma_resolvent_6_2} to \eqref{proof_lemma_resolvent_6_5}  show \eqref{proof_lemma_resolvent_6_1_2}. Moreover, by (\ref{proof_lemma_resolvent_6_1}), \eqref{proof_lemma_resolvent_6_1_1} and \eqref{proof_lemma_resolvent_6_1_2},    we have 
$$
\frac{d}{dt} Q_H(e^{itA}f,e^{itA}f )\Big|_{t=0}= Q_{[H,iA]}(f,f ).
$$
Using more generally the differentiability at any $t$, we obtain formula
\begin{align}
\label{expressionaderiverA}
Q_H ( e^{itA}f,e^{itA}f )
=Q_H(f,f)+\int_0^t Q_{[H,iA]} (e^{isA}f,e^{isA}f )ds. 
\end{align}
Combining this with  \eqref{proof_lemma_resolvent_5_0} and the fact $||e^{itA}f||_{L^2}=||f||_{L^2}$ implies
$$
||e^{itA}f||_{\mathcal G^1}^2\le ||f||_{\mathcal G^1}^2+\frac{c_0}{2}\int_0^{|t|} ||e^{isA}f||_{\mathcal G^1}^2ds, 
$$
with  $c_0 = 2C$ coming from (\ref{proof_lemma_resolvent_5_0}). Gronwall's inequality then shows $ ||e^{itA}f||_{\mathcal G^1}\le e^{c_0|t|} ||f||_{{\mathcal G}^1}$ for $f\in C_0^{\infty}(\R^n \setminus 0)$, which remains true on $ {\mathcal G}^1 $ by density.
The boundedness of $ (A + i \kappa)^{-1} $, say for $ \kappa > 0 $, follows from the fact that $ (A+i \kappa)^{-1} = i^{-1} \int_0^{+\infty} e^{-t \kappa} e^{itA} dt $. 

The proof of the second assertion is similar. Indeed,  $ x \cdot \nabla V$ satisfies the same conditions as $V$, namely $ x \cdot \nabla V\in L^{\frac n2,\infty}_{\rm loc}$ for $n\ge3$ or $x \cdot \nabla V\in L^1_{\rm loc}+|x|^{-2}L^\infty_{\rm loc}$ for $n=2$. This allows to differentiate $  Q_{S} (e^{itA}f) $ in $t$ for $f \in C_0^{\infty}(\R^n \setminus 0) $. Then,   (\ref{proof_lemma_resolvent_6_6}) allows to use the Gronwall argument. We conclude  using the density of $ C_0^{\infty} (\R^n \setminus 0) $ in  $ {\mathcal G}^1 $ and the fact that  $ ||S^{1/2} f ||_{L^2} \le C || f ||_{{\mathcal G}^1} $.
\end{proof}

With this proposition at hand, we can define $ B := S^{1/2} (A+i\kappa)^{-1} $ as an operator from $ {\mathcal G}^1 $ to $ L^2 $ and then define the bounded operator $ F_{\epsilon} (z) : L^2 \rightarrow L^2 $ by
$ F_{\epsilon} (z) := B G_{\epsilon} (z) B^* . $ 
 It is useful to record that  (\ref{definitionpreciseadjoint}) implies that
\begin{align}
 F_{\epsilon}(z)^* = F_{-\epsilon} (\bar{z}) \label{adjointrecord}
\end{align} 

 Seeing $ S^{1/2} $ as an operator from $ {\mathcal G}^1 $ to $ L^2 $, we denote its adjoint as $ (S^{1/2})^* $ (it maps conjugate linear forms on $ L^2 $ to conjugate linear forms on $ {\mathcal G}^1 $).
Notice that, on $ D (S^{1/2}) \subset L^2 $, $ (S^{1/2})^* $ coincides with $ S^{1/2} $  in the sense that $ (S^{1/2})^* \langle f , . \rangle = \langle S^{1/2} f , . \rangle $ as elements of $ {\mathcal G}^{-1} $. We use the notation $ (S^{1/2})^* $  to distinguish clearly  $ S^{1/2} : {\mathcal G}^1 (\subset D (S^{1/2})) \rightarrow L^2 $ from $ ( S^{1/2})^*: L^2 \rightarrow {\mathcal G}^{-1}  $.

\begin{proposition} \label{algebretechnique} Consider $ \tilde{S}^{\prime} :  {\mathcal G}^1 \rightarrow {\mathcal G}^{-1} $  introduced in (\ref{introductionSprime}). Then  for $ \epsilon {\emph Im}(z) > 0 $,
$$ \frac{d}{d\epsilon} F_{\epsilon} (z) = 2 i \kappa F_{\epsilon} (z) - S^{1/2} G_{\epsilon} (z) B^* + B G_{\epsilon}(z) (S^{1/2})^*  - \epsilon B G_{z}(\epsilon) \tilde{S}^{\prime} G_{z} (\epsilon) B^* .$$
\end{proposition}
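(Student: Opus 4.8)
The plan is to differentiate $F_\epsilon(z)=BG_\epsilon(z)B^*$ through the $\epsilon$-independent bounded factors $B\colon\mathcal G^1\to L^2$ and $B^*=(A-i\kappa)^{-1}(S^{1/2})^*\colon L^2\to\mathcal G^{-1}$, so that by \eqref{formulederiveeG}
\begin{align*}
\tfrac{d}{d\epsilon}F_\epsilon(z)=B\,G_\epsilon(z)\,i\tilde S\,G_\epsilon(z)\,B^*,
\end{align*}
an identity of bounded operators on $L^2$ since $\tilde S\colon\mathcal G^1\to\mathcal G^{-1}$ is continuous and $G_\epsilon(z)\colon\mathcal G^{-1}\to\mathcal G^1$. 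The whole content then comes from the "commutator" identity $[A,\tilde H-z-i\epsilon\tilde S]=i\tilde S+\epsilon\tilde S'$ (at the level of forms on $\mathcal G^1$), which after conjugation by $G_\epsilon(z)=(\tilde H-z-i\epsilon\tilde S)^{-1}$ becomes $G_\epsilon(z)(i\tilde S+\epsilon\tilde S')G_\epsilon(z)=-[A,G_\epsilon(z)]$, followed by commuting $A$ through $B=S^{1/2}(A+i\kappa)^{-1}$ and $B^*$. To avoid the unbounded operator $A$, I would phrase everything through the group $e^{itA}$.

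Concretely, write $L:=\tilde H-z-i\epsilon\tilde S\colon\mathcal G^1\to\mathcal G^{-1}$ (invertible by Proposition \ref{propinversibilite}) and $L_t:=e^{-itA}Le^{itA}$, so that $e^{-itA}G_\epsilon(z)e^{itA}=L_t^{-1}$; here $e^{itA}$ is bounded on $\mathcal G^1$ by Proposition \ref{lemma_resolvent_6}(1), hence on $\mathcal G^{-1}$ by duality, and it commutes with $(A\pm i\kappa)^{-1}$ on these spaces. Testing $\langle L_tf,g\rangle_{\mathcal G^{-1},\mathcal G^1}$ against $f,g\in\mathcal G^1$ and using the group-law formula \eqref{expressionaderiverA} (polarized), its analogue for $Q_{[H,iA]}$ obtained in the proof of Proposition \ref{lemma_resolvent_6}, and the unitarity of $e^{itA}$ on $L^2$ for the $z$-term, gives $\tfrac{d}{dt}L_t|_{t=0}=\tilde S-i\epsilon\tilde S'$, whence $\tfrac{d}{dt}L_t^{-1}|_{t=0}=-G_\epsilon(z)(\tilde S-i\epsilon\tilde S')G_\epsilon(z)$, and therefore
\begin{align*}
G_\epsilon(z)\,i\tilde S\,G_\epsilon(z)=-i\,\tfrac{d}{dt}\big[e^{-itA}G_\epsilon(z)e^{itA}\big]_{t=0}-\epsilon\,G_\epsilon(z)\tilde S'G_\epsilon(z).
\end{align*}

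It then remains to evaluate $-iB\big(\tfrac{d}{dt}[e^{-itA}G_\epsilon(z)e^{itA}]_{t=0}\big)B^*$. Writing $Be^{-itA}G_\epsilon(z)e^{itA}B^*=S^{1/2}\big((A+i\kappa)^{-1}e^{-itA}\big)G_\epsilon(z)\big(e^{itA}(A-i\kappa)^{-1}\big)(S^{1/2})^*$ and differentiating at $t=0$ by Leibniz — now legitimate, because $\tfrac{d}{dt}[(A+i\kappa)^{-1}e^{-itA}]_{t=0}=-i(A+i\kappa)^{-1}A=-i\big(I-i\kappa(A+i\kappa)^{-1}\big)$ is bounded on $\mathcal G^1$ while $\tfrac{d}{dt}[e^{itA}(A-i\kappa)^{-1}]_{t=0}=iA(A-i\kappa)^{-1}=i\big(I+i\kappa(A-i\kappa)^{-1}\big)$ is bounded on $\mathcal G^{-1}$ — and using $S^{1/2}(A+i\kappa)^{-1}=B$ and $(A-i\kappa)^{-1}(S^{1/2})^*=B^*$, a short computation gives $-iB\big(\tfrac{d}{dt}[e^{-itA}G_\epsilon(z)e^{itA}]_{t=0}\big)B^*=2i\kappa F_\epsilon(z)-S^{1/2}G_\epsilon(z)B^*+BG_\epsilon(z)(S^{1/2})^*$. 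Combining this with the previous display and the expression for $\tfrac{d}{d\epsilon}F_\epsilon(z)$ yields the claimed formula (with $-\epsilon BG_\epsilon(z)\tilde S'G_\epsilon(z)B^*$ as the last term). The main obstacle is the differentiation of $L_t$ at $t=0$: one must carefully transfer \eqref{expressionaderiverA} and its $Q_{[H,iA]}$-analogue — proven on $C_0^\infty(\mathbb R^n\setminus 0)$, where $e^{itA}$ acts by the explicit formula \eqref{dilation} — to all of $\mathcal G^1$ using the continuity of $\tilde H,\tilde S,\tilde S'$ from $\mathcal G^1$ to $\mathcal G^{-1}$ and the density of $C_0^\infty(\mathbb R^n\setminus 0)$, while keeping track throughout of the triple $\mathcal G^1\subset L^2\subset\mathcal G^{-1}$ so that every composition lands in the correct space.
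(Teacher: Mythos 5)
Your proposal is correct and follows essentially the same route as the paper: conjugation $L_t=e^{-itA}Le^{itA}$, weak differentiation of $L_t$ at $t=0$ via the polarized form of \eqref{expressionaderiverA} to get $\tilde S-i\epsilon\tilde S'$, combination with \eqref{formulederiveeG}, and the final commutation of $A$ through $(A\pm i\kappa)^{-1}$ to produce the $2i\kappa F_\epsilon(z)$ and the two $S^{1/2}$-terms. The only point treated more lightly than in the paper is the passage from weak differentiability of $L_t$ to that of $L_t^{-1}$, for which the paper invokes uniform boundedness to get $\|L_t-L_0\|_{\mathcal G^1\to\mathcal G^{-1}}=O(t)$ before differentiating the inverse; this is a standard gap-filling step consistent with your plan.
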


\begin{proof}
We note first that $ e^{itA} $ is strongly continuous on $ {\mathcal G}^1 $. This is the case on $ C_0^{\infty} (\R^n \setminus 0) $ (for the $ {\mathcal G}^1 $ topology) according to the proof of Proposition \ref{lemma_resolvent_6} and remains true on $ {\mathcal G}^1 $ by density and the locally uniform bound $ ||e^{itA}||_{{\mathcal G}^1 \rightarrow {\mathcal G}^1} \leq e^{c_0 |t|} $. Using (\ref{expressionaderiverA}), whose integrand is continuous by the strong continuity of $ e^{itA} $ on $ {\mathcal G}^1 $, we find that
\begin{align}
 Q_{[H,iA]} (f,g) = \left. \frac{d}{dt} Q_H (e^{itA}f , e^{itA}g) \right|_{t=0} \label{above1}
\end{align}
 for all $ f, g \in {\mathcal G}^1 $. Note that we do not use (nor claim) that $ e^{itA}f $ and $ e^{itA}g $ are differentiable at $t=0$ for any $f,g \in {\mathcal G}^1$. Similarly, for all $ f, g \in {\mathcal G}^1 $,
\begin{align}
    Q_{[[H,iA],iA]} (f,g) = \left. \frac{d}{dt} Q_{[H,iA]} (e^{itA}f , e^{itA}g) \right|_{t=0} .  \label{above2}
\end{align} 
   Define $ e^{-itA} $ on $ {\mathcal G}^{-1} $ by $ \langle e^{-itA} u , f \rangle_{{\mathcal G}^{-1},{\mathcal G}^1} =  \langle  u , e^{itA} f \rangle_{{\mathcal G}^{-1},{\mathcal G}^1}  $ so that it is a bounded operator on $ {\mathcal G}^{-1} $, with bounded inverse $ e^{itA} $. This allows to define the $t$ dependent families of operators
 $$  L_t := \tilde{H}_t  - i \epsilon \tilde{S}_t, \quad  \tilde{H}_t := e^{-itA}  \tilde{H}  e^{itA} , \quad \tilde{S}_t = e^{-itA} \tilde{S} e^{itA} . $$
Then (\ref{above1}) and (\ref{above2}) show that these families are weakly differentiable at $t = 0$. In particular
\begin{align}
 \left. \frac{d}{dt}  L_t \right|_{t=0} =  \tilde{S} - i \epsilon \tilde{S}^{\prime} .  \label{deriveeL}
\end{align}
By Proposition \ref{propinversibilite},  the operator $ L_t - z $ is invertible with inverse $ G^t_{\epsilon} (z) := e^{-itA} G_{\epsilon} (z) e^{itA} $.
  By uniform boundedness principle, the weak differentiability of $ L_t $ at $t=0$ implies  that
$$ \big| \big| L_t - L_0 \big| \big|_{{\mathcal G}^1 \rightarrow {\mathcal G}^{-1}} = O (t) , \quad  \big| \big| G_{\epsilon}^t(z) - G_{\epsilon} (z) \big| \big|_{{\mathcal G}^{-1} \rightarrow {\mathcal G}^{1}} = O (t) $$
the second estimate being a consequence of the first one.  Then
\begin{align}
  \frac{1}{t} \big( G^t_{\epsilon} (z)  - G_{\epsilon} (z) \big) =  G_z (\epsilon) \frac{1}{t} \big( L_0 - L_t \big) G_{\epsilon}(z) +    \big( G_z^t (\epsilon) - G_z (\epsilon) \big) \frac{1}{t} \big(  L_0 - L_t  \big) G_{\epsilon}(z)  \label{BanachSteinhaus}
 \end{align}
 where the second term in the right hand side is $ O (t) $ in the $ {\mathcal G}^{-1} \rightarrow {\mathcal G}^{1} $ operator norm by (\ref{BanachSteinhaus}). On the other hand, using (\ref{definitionpreciseadjoint}) and (\ref{deriveeL}), it is easy to see that for any $ u , v \in {\mathcal G}^{-1} $,
$$ \big\langle u ,  G_z (\epsilon) \frac{1}{t} \big( L_0 - L_t \big) G_{\epsilon}(z) v \big\rangle_{{\mathcal G}^{-1}, {\mathcal G}^1} \rightarrow - \big\langle u ,  G_z (\epsilon)  \big( \tilde{S} - i \epsilon \tilde{S}^{\prime} \big) G_{\epsilon}(z) v \big\rangle_{{\mathcal G}^{-1}, {\mathcal G}^1}   \quad\text{as}\quad t \rightarrow 0 . $$
 In other words,  $ G_{\epsilon}^t (z) $ is weakly differentiable at $t=0$  with derivative $ G_z (\epsilon) ( i \epsilon \tilde{S}^{\prime} - \tilde{S} ) G_z (\epsilon) $. Taking into account the formula (\ref{formulederiveeG}), we find that
 $$ \frac{d}{d \epsilon} B G_{\epsilon}(z) B^* = -i \left. \frac{d}{dt} B e^{-itA} G_{\epsilon} (z) e^{itA} B^* \right|_{t=0} -  \epsilon B G_{\epsilon}(z)  \tilde{S}^{\prime} G_{\epsilon}(z) B^* . $$
 This formula is true for any bounded operator $ B : {\mathcal G}^1 \rightarrow L^2 $. For $ B = S^{1/2} (A + i \kappa)^{-1} $, one can test the above identity against $ f , g \in {\mathcal G}^{1} $ (which is dense in $ L^2 $) so that, by using
 $$ \left. \frac{d}{dt} e^{itA} B^* g \right|_{t=0} = i A (A-i\kappa)^{-1} S^{1/2} g = S^{1/2}g - i \kappa B^* g , $$
 we obtain easily the result.
 \end{proof}
 
\begin{proof}[Proof of Theorem \ref{theorem_resolvent_7}]
For simplicity, we denote by $ || \cdot || $ the operator norm on $ L^2 $.
Since $ || F_{\epsilon} (z) || \leq C_{\kappa} |\epsilon|^{-1/2}  || F_{\epsilon} (z) ||^{1/2} $ by Proposition \ref{bornedissipative} and (\ref{pourinitialisation}), we have the following estimate uniform in $z$ such that $ \epsilon \mbox{Im} (z) > 0$,
\begin{align}
  || F_{\epsilon} (z) ||\leq C_{\kappa}^2 |\epsilon|^{-1} . \label{bornedinitialisation} 
\end{align}  
On the other hand, using Proposition \ref{algebretechnique}, the norm $ \big| \big| \frac{d}{d \epsilon} F_{\epsilon} (z) \big| \big| $   is bounded (from above) by
$$   2 \kappa \big| \big|F_{\epsilon}(z) \big| \big| + \big| \big| S^{1/2} G_{\epsilon} (z) B^* \big| \big| + \big| \big| S^{1/2} G_{-\epsilon} (\bar{z}) B^* \big| \big| + C^{\prime} |\epsilon|   \big| \big| S^{1/2} G_{-\epsilon} (\bar{z}) B^* \big| \big|  \big| \big| S^{1/2} G_{\epsilon} (z) B^* \big| \big|  $$
with $C^{\prime}$ such that $ | Q_{[[H,iA],iA]}(f,g) | \leq C^{\prime} || S^{1/2} f ||_{L^2} || S^{1/2} g ||_{L^2} $. This is obtained easily by testing the expression of Proposition \ref{algebretechnique} and by using (\ref{definitionpreciseadjoint}). From Proposition \ref{bornedissipative} and (\ref{adjointrecord}), we obtain
$$ \left|  \left| \frac{d}{d \epsilon} F_{\epsilon} (z) \right| \right| \leq \big( 2 \kappa + C^{\prime} \big) \big| \big|F_{\epsilon}(z) \big| \big| + 2 |\epsilon|^{-1/2} \big| \big| F_{\epsilon} (z) \big| \big|^{1/2} .  $$
Together with (\ref{bornedinitialisation}), this gives a uniform bound $ || F_{\epsilon} (z) || \leq C $ for $ \epsilon \mbox{Im}(z) > 0 $. Since $ G_{\epsilon} (z) $ and $ F_{\epsilon} (z) $ are continuous up to $ \epsilon =0 $ (as $ \tilde{H} - z - i \epsilon \tilde{S} $ is), one obtains the result by letting $ \epsilon\to0$ in
$$ \left| \langle B^* f , G_{\epsilon} (z) B^* g \rangle_{{\mathcal G}^{-1},{\mathcal G}^1} \right| =  |\langle f, F_{\epsilon}(z) g \rangle |  \leq C || f ||_{L^2} ||g||_{L^2} $$
and by using that, for $ f \in {\mathcal G}^1 $, $ B^* f $ is given by the $L^2$ function $ (A- i \kappa)^{-1} S^{1/2} f $. 
\end{proof}

\appendix

\section{The Christ-Kiselev lemma}\label{Christ_Kiselev}\setcounter{equation}{0}
We record a special case  taken from  \cite[Lemma 3.1]{SmSo} of the Christ-Kiselev lemma \cite{CK}. \begin{lemma}\label{lemma_CK} Let $a,b\in \R$ and let $X$ and $Y$ be Banach spaces. Consider the integral operator $$Tf(t)=\int_a^b K(t,s)f(s)ds.$$ Suppose that $K\in L^1_{\mathrm{loc}}(\R^2,\mathbb B(X,Y))$ and $T$ is bounded from $L^p([a,b];X)$ to $L^q([a,b];Y)$ and satisfies $$|| Tf ||_{L^{q}([a,b];Y)}\le C_0 || f ||_{L^{p}([a,b];X)}$$
 for some $1\le p<q\le\infty$ and $C_0>0$. Then the operator $\widetilde T$ defined by $$\widetilde{T}f(t)=\int_a^tK(t,s)f(s)ds$$ is also bounded from $L^p([a,b];X)$ to $L^q(a,b];Y)$ and satisfies $$\big| \big| \widetilde{T} f \big| \big|_{L^{q}([a,b];Y)}\le C_1|| f ||_{L^{p}([a,b];X)},$$where $C_1=C_02^{1-2(1/p-1/q)}(1-2^{-(1/p-1/q)})^{-1}$. \end{lemma}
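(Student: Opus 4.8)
The plan is to run the classical dyadic decomposition argument of Christ--Kiselev, reducing the boundedness of $\widetilde T$ to that of $T$ by writing the triangular region $\{a\le s\le t\le b\}$ as a disjoint union (up to a null set) of product sets $\Omega'\times\Omega''$ on which $\widetilde T$ restricts to a piece of $T$, and then summing the pieces using the crucial fact that $p<q$ makes an $\ell^p$ sum dominate the relevant $\ell^q$ sum.

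First I would normalize $\|f\|_{L^p([a,b];X)}=1$ and introduce $F(t):=\int_a^t\|f(s)\|_X^p\,ds$, which is continuous, nondecreasing, absolutely continuous with $F(a)=0$, $F(b)=1$ and $F'=\|f(\cdot)\|_X^p$ a.e. For $j\ge1$ let $I_{j,m}=[(m-1)2^{-j},m2^{-j})$, $1\le m\le2^j$, be the dyadic subintervals of $[0,1)$ and put $\Omega_{j,m}:=F^{-1}(I_{j,m})$; then for fixed $j$ the $\Omega_{j,m}$ are pairwise disjoint and, by absolute continuity of $F$, $\|f\mathds{1}_{\Omega_{j,m}}\|_{L^p}^p=|I_{j,m}|=2^{-j}$. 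The combinatorial input is that for $0\le\sigma<\tau<1$ there is a unique $j\ge1$ with $\sigma,\tau$ in a common dyadic interval of generation $j-1$ but in distinct ones of generation $j$; since $\sigma<\tau$, necessarily $\sigma\in I_{j,2k-1}$ and $\tau\in I_{j,2k}$ for the corresponding $k\in\{1,\dots,2^{j-1}\}$, so that
$$\{(\tau,\sigma):0\le\sigma<\tau<1\}=\bigsqcup_{j\ge1}\ \bigsqcup_{k=1}^{2^{j-1}}I_{j,2k}\times I_{j,2k-1}.$$
Pulling this back through $F$ and using continuity of $F$ to discard the set $\{F(s)=F(t),\ s\ne t\}$ (on which $f=0$ a.e., so it does not contribute to $\widetilde Tf$), and noting that for $s\in\Omega_{j,2k-1}$, $t\in\Omega_{j,2k}$ one has $F(s)<F(t)$ hence $s<t$, I obtain the pointwise identity
$$\widetilde Tf(t)=\sum_{j\ge1}\sum_{k=1}^{2^{j-1}}\mathds{1}_{\Omega_{j,2k}}(t)\,T\!\bigl(f\mathds{1}_{\Omega_{j,2k-1}}\bigr)(t)\qquad\text{for a.e. }t.$$

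It then remains to estimate, using Minkowski's inequality in $L^q$, the disjointness of $\{\Omega_{j,2k}\}_k$ for fixed $j$, the hypothesis $\|Tg\|_{L^q}\le C_0\|g\|_{L^p}$, and finally the embedding $\ell^p\hookrightarrow\ell^q$ together with $\|f\mathds{1}_{\Omega_{j,2k-1}}\|_{L^p}^p=2^{-j}$ and $\#\{k\}=2^{j-1}$:
$$\|\widetilde Tf\|_{L^q}\le\sum_{j\ge1}\Bigl(\sum_{k}\|T(f\mathds{1}_{\Omega_{j,2k-1}})\|_{L^q}^q\Bigr)^{1/q}\le C_0\sum_{j\ge1}\Bigl(\sum_{k}\|f\mathds{1}_{\Omega_{j,2k-1}}\|_{L^p}^q\Bigr)^{1/q}\lesssim C_0\sum_{j\ge1}2^{-j(1/p-1/q)},$$
a convergent geometric series of ratio $2^{-(1/p-1/q)}<1$; collecting the powers of two gives $\|\widetilde Tf\|_{L^q}\le C_1\|f\|_{L^p}$ with $C_1$ as stated (when $q=\infty$ the $\ell^q$ sums are replaced by suprema and the same computation applies formally with $1/q=0$). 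I expect the one genuinely delicate point to be the book-keeping of the decomposition: verifying that the rectangles $\Omega_{j,2k}\times\Omega_{j,2k-1}$ tile the triangle up to a null set, that continuity of $F$ really does kill the diagonal, and that the interchange of the $j$-summation with integration is legitimate (justified by monotone convergence, everything being nonnegative after taking norms). The passage from $T$ to $\widetilde T$ and the tracking of the explicit constant are then routine.
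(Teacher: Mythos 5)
Your argument is correct: it is the standard Christ--Kiselev dyadic (Whitney-type) decomposition of the triangle $\{s<t\}$ via the distribution function $F$, which is exactly the proof behind the result the paper only \emph{cites} (Appendix \ref{Christ_Kiselev} records the lemma from Smith--Sogge \cite[Lemma 3.1]{SmSo} and \cite{CK} without proof), so there is nothing in the paper to diverge from. Two small points of bookkeeping. First, your tiling of $\{0\le\sigma<\tau<1\}$ misses the fibre $\{F(t)=1\}$, which can have positive measure in $t$ if $f$ vanishes on a terminal interval; the standard fix is to close the last dyadic interval of each generation at $1$, after which the argument goes through verbatim. Second, the invocation of ``$\ell^p\hookrightarrow\ell^q$'' is not really the mechanism: applied naively it gives $\big(\sum_k a_k^q\big)^{1/q}\le\big(\sum_k a_k^p\big)^{1/p}\le 2^{-1/p}$ for every $j$, with no decay in $j$. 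What your displayed computation actually (and correctly) uses is that all $2^{j-1}$ terms are equal to $2^{-jq/p}$, so that $\big(2^{j-1}2^{-jq/p}\big)^{1/q}=2^{-1/q}2^{-j(1/p-1/q)}$, and $p<q$ is what makes the resulting geometric series converge; carried out exactly, this yields the constant $C_02^{-1/p}\big(1-2^{-(1/p-1/q)}\big)^{-1}$, which is no larger than the $C_1$ stated in the lemma, so the claimed bound follows.
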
Note that the condition $p<q$ is essential in the sense that if $K(t,s)=(t-s)^{-1}$ then this lemma fails for $1<p=q<\infty$.

\section{Proof of Theorem \ref{theorem_resolvent_2main}}
\label{appendix_C}
\setcounter{equation}{0}

Here we prove Theorem \ref{theorem_resolvent_2main}. It will be convenient to use the notation $ r = |x| $ and $ \partial_r = \frac{x}{|x|} \cdot \nabla $.

Let $f\in C_0^{\infty}(\R^n \setminus 0)$, $z=\lambda+i \varepsilon \in \C\setminus[0,\infty)$ with $\lambda,\varepsilon\in\R$. Let $ u = (H - \lambda - i \varepsilon)^{-1} f $ be the solution to the Helmholtz equation
\begin{align}
\label{Helmholtz}
u = (H-\lambda- i \varepsilon) f .
\end{align} 
Note that $H$ is nonnegative (by the assumptions \eqref{assumption_1_1} or \eqref{positifdim2}) so we may take $ \varepsilon = 0 $ if $ \lambda < 0 $.
Below, we only consider the case $ \varepsilon \geq 0$ (i.e. $ \varepsilon > 0 $ if $ \lambda \geq 0 $ or $ \varepsilon \geq 0 $ if $ \lambda < 0 $) since the proof for the case $ \varepsilon <0$ is analogous. The proof basically follows the method of \cite[Sections 2 and 3]{BVZ} which is based on the following two lemmas.

\begin{lemma} Let $ n \geq 2 $.
\label{lemma_appendix_C_1}
Then $ r^{\frac{1}{2}}u $ and $ r^{\frac{1}{2}} \nabla u $ belong to $ L^2 $ and we have the following five identities
\begin{align}
\label{proof_C_2}
\int\Big(|\nabla u|^2-\lambda |u|^2+V|u|^2\Big)dx
&=\emph{Re}\int f\overline{u}dx,\\
\label{proof_C_3}
- \varepsilon \int|u|^2dx
&=\emph{Im} \int f\overline{u}dx,\\
\label{proof_C_4}
\int \Big(r|\nabla u|^2-\lambda r|u|^2+ rV|u|^2+ \emph{Re}(\overline u\partial_ru)\Big)dx
&=\emph{Re} \int rf\overline udx,\\
\label{proof_C_5}
\int\Big(- \varepsilon r|u|^2+ \emph{Im}(\overline u\partial_ru)\Big)dx
&= \emph{Im} \int rf\overline udx,\\
\label{proof_C_6}
\int\Big(2|\nabla u|^2- (r\partial_r V)|u|^2-2 \varepsilon \emph{Im}(\overline ur\partial_ru)\Big)dx
&= \emph{Re} \int f(2r\partial_r \overline u+n\overline u)dx. 
\end{align}
\end{lemma}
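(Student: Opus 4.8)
The plan is to read off all five identities from the Helmholtz equation \eqref{Helmholtz}, i.e. $(-\Delta+V)u-(\lambda+i\varepsilon)u=f$, by pairing it in $L^2$ against the three multipliers $u$, $ru$, and $2x\cdot\nabla u+nu$ (recall $r\partial_r=x\cdot\nabla$), and then separating real and imaginary parts. Testing against $u$ yields \eqref{proof_C_2} (real part) and \eqref{proof_C_3} (imaginary part), and these two are immediate once $u\in {\mathcal H}^1$. Testing against $ru$ yields \eqref{proof_C_4} (real part) and \eqref{proof_C_5} (imaginary part); the one computation to isolate here is $\langle\nabla u,\nabla(ru)\rangle=\int r|\nabla u|^2\,dx+\int\bar u\,\partial_r u\,dx$, obtained from $\nabla(ru)=u\nabla r+r\nabla u$ and $\nabla u\cdot\nabla r=\partial_r u$, whose real and imaginary parts produce the terms $\mathrm{Re}(\bar u\,\partial_r u)$ and $\mathrm{Im}(\bar u\,\partial_r u)$. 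Finally \eqref{proof_C_6} is the real part of the pairing against $2x\cdot\nabla u+nu=\overline{2r\partial_r u+nu}$.

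For \eqref{proof_C_6} I would invoke the two standard Rellich--Pohozaev identities
$$\mathrm{Re}\,\langle-\Delta u,\,2x\cdot\nabla u+nu\rangle=2\|\nabla u\|_{L^2}^2,\qquad \mathrm{Re}\,\langle Vu,\,2x\cdot\nabla u+nu\rangle=-\int(x\cdot\nabla V)|u|^2\,dx,$$
the first following by integration by parts together with $\frac{1}{2}\int x\cdot\nabla(|\nabla u|^2)\,dx=-\frac{n}{2}\|\nabla u\|_{L^2}^2$, the second from $\mathrm{Re}\big(u\,\overline{x\cdot\nabla u}\big)=\frac{1}{2}\,x\cdot\nabla|u|^2$ and $\nabla\cdot(Vx)=x\cdot\nabla V+nV$. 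The lower-order terms contribute the cancellation $\mathrm{Re}\,\langle\lambda u,\,2x\cdot\nabla u+nu\rangle=0$, which is why no $\lambda$-term appears, and $\mathrm{Re}\,\langle i\varepsilon u,\,2x\cdot\nabla u+nu\rangle=2\varepsilon\int\mathrm{Im}(\bar u\,r\partial_r u)\,dx$, which produces the $\varepsilon$-term in the form stated. Collecting terms gives \eqref{proof_C_6}.

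The point that needs genuine work is the regularity claim and the justification of all the integrations by parts above, since a priori $u=(H-\lambda-i\varepsilon)^{-1}f$ only lies in $D(H)$ and $V$ has critical singularities. I would first show that $u$ and $\nabla u$ decay exponentially at infinity: since $\varepsilon>0$ when $\lambda\ge0$ and $\lambda<0$ when $\varepsilon=0$, the spectral parameter is off $[0,\infty)$ and the free resolvent kernel $R_0(z)$ decays exponentially, so either a Combes--Thomas-type argument (conjugating $H-z$ by $e^{\eta\langle x\rangle}$, which remains invertible near $z$ because $V$ is $-\Delta$-form bounded) or a bootstrap in $u=R_0(z)(f-Vu)$ using the Morrey--Campanato mapping properties of $V$ recorded in Subsection \ref{sectionnotationLorentz} yields $e^{\eta|x|}u\in L^2$ for some $\eta>0$, hence $\nabla u$ likewise by interior elliptic estimates on unit balls away from the origin; this gives $r^{1/2}u,\,r^{1/2}\nabla u\in L^2$, and near the origin there is nothing further to check since ${\mathcal G}^1\hookrightarrow{\mathcal H}^1$. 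To make the formal computations rigorous I would not test directly against $ru$ or $2x\cdot\nabla u+nu$, but against $\phi_j^2$ times these, with $\phi_j\in C_0^\infty(\R^n\setminus0)$ equal to $1$ on $\{j^{-1}<|x|<j\}$, and, in dimension $2$ (where ${\mathcal G}^1$ is strictly smaller than ${\mathcal H}^1$), with a logarithmic cutoff $\phi_j(x)=\chi(j^{-1}\log|x|)$ near the origin so that $|\nabla\phi_j|\lesssim(j|x|)^{-1}$; each regularized identity is then legitimate, and letting $j\to\infty$ the error terms carrying $\nabla\phi_j$ vanish by the exponential decay at infinity and, near the origin in dimension $2$, by the weighted $L^2$-control on $u$ built into Assumption \ref{assumption_2}. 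This limiting procedure, carried out as in \cite{BVZ}, is the main obstacle; once it is in place the five identities follow by rearranging terms.
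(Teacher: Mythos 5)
Your proposal is correct in outline but follows a genuinely different route from the paper. The paper never establishes any a priori decay of $u$: instead, for \eqref{proof_C_4}--\eqref{proof_C_5} it tests against $r\chi(\delta r)\bar u$ and exploits the fact that the left-hand integrand $-\varepsilon r\chi_\delta|u|^2$ has a fixed sign, so that monotone convergence simultaneously yields the identity \emph{and} the membership $r^{1/2}u\in L^2$ (and then $r^{1/2}\nabla u\in L^2$ from the real part); and for the virial identity \eqref{proof_C_6} it regularizes the multiplier by replacing the dilation generator $A$ with $A(\delta A^2+1)^{-1}$, relying on the invariance of ${\mathcal G}^1$ under $e^{itA}$ (Proposition \ref{lemma_resolvent_6}) rather than on spatial truncation. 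You instead front-load a Combes--Thomas exponential-decay estimate for the fixed spectral parameter $z\notin[0,\infty)$ and then justify all integrations by parts with spatial cutoffs whose error terms vanish. This is a legitimate alternative (the identities are needed only for each fixed $z$, so the degeneration of the decay rate as $z$ approaches the spectrum is harmless), and your virial computation is the standard Pohozaev argument; what the paper's method buys is that it needs no resolvent-decay input at all and extracts the weighted integrability directly from the sign structure of the identities, while your method buys a more transparent, uniform justification of all three multipliers at once.

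One step of your argument needs repair: you cannot get the decay of $\nabla u$ from ``interior elliptic estimates on unit balls away from the origin,'' because under Assumption \ref{assumption_1} the potential is only in a Morrey--Campanato class and may have further singularities away from the origin (Example \ref{example}), so classical interior regularity does not apply. The fix is to run the Combes--Thomas argument at the form level: the conjugated form $Q_H(e^{-\eta\langle x\rangle}f,e^{\eta\langle x\rangle}g)$ differs from $Q_H(f,g)$ by terms that are $O(\eta)$-form-bounded relative to $-\Delta$, hence (by \eqref{assumption_1_1}) relative to $H-z$, so coercivity survives for small $\eta$ and yields $e^{\eta\langle x\rangle}u\in{\mathcal G}^1$ directly, which gives the gradient bound. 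With that replacement, and the observation that in the justification of \eqref{proof_C_6} the products $V\bar u\,(x\cdot\nabla u)$ and $(x\cdot\nabla V)|u|^2$ are integrable thanks to \eqref{theorem_resolvent_2main_1} (respectively Assumption \ref{assumption_2} when $n=2$), your limiting procedure goes through.
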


\begin{proof}
Note that $\mathcal G^1=\mathcal H^1$ if $n\ge3$ and $\mathcal G^1$ is the completion of $C_0^\infty(\R^2\setminus\{0\})$ with respect to the norm $(Q_H(u)+||u||_{L^2}^2)^{1/2}$ if $n=2$ under conditions in Theorem \ref{theorem_resolvent_2main}. 

\eqref{proof_C_2} and \eqref{proof_C_3} just correspond to the expressions of  the real and imaginary parts of the identity $ Q_H (u,u) - z ||u||_{L^2}  = \langle f , u \rangle$ which follows from (\ref{Helmholtz}). We point out that the integral $ \int V |u|^2 dx $ is well defined thanks to \eqref{theorem_resolvent_2main_1} if $n\ge3$. 
If $n=2$, we use that $ V^{1/2} u  $ belongs to $ L^2 $ for $u \in {\mathcal G}^1 $ since if $ u_j \in C_0^{\infty} (\R^2 \setminus 0) $ approaches $ u $  in $ {\mathcal G}^1 $, then $ V^{1/2} u_j $ is a Cauchy sequence in $ L^2 $. 

At a formal level, (\ref{proof_C_3}) and (\ref{proof_C_4}) follow by multiplying (\ref{Helmholtz}) by $ r \bar{u}  $, then integrating and taking real and imaginary parts. To make this calculation rigorous, we pick $ \chi \in C_0^{\infty}(\R) $ equal to $1$ near $0$ and multiply   (\ref{Helmholtz}) by $ r \chi (\delta
  r) \bar{u} =: r \chi_{\delta} \bar{u}  $. It is not hard to check that $ r \chi_{\delta} u \in {\mathcal G}^1 $ which allows to use the identity $ Q_H (u, r \chi_{\delta} u) = \langle{H u ,r \chi_{\delta} u \rangle}  $. Taking  the imaginary part (and using (\ref{Helmholtz})), we obtain
$$ \int - \varepsilon r \chi_{\delta} |u|^2 dx
 = \mbox{Im} \int  \Big(  \chi_{\delta} rf \bar{u} -  ( r \chi_{\delta} )^{\prime} \bar{u}\partial_ru \Big) dx . $$
 Since the right hand side has a limit as $ \delta \rightarrow 0 $ while the integrand of the left hand side has a fixed sign, we can let $ \delta \rightarrow 0 $ and get (\ref{proof_C_5}) by monotone convergence (we can choose $ \chi $ such that $ \chi_{\delta} (r) \uparrow 1 $ as $ \delta \downarrow 0 $). In particular, we have $r^{1/2}u\in L^2$. Taking next the real part of  $ Q_H (u, r \chi_{\delta} u) = \langle{H u ,r \chi_{\delta} u \rangle}  $, we have
 $$ \int  \chi_{\delta }r|\nabla u|^2 dx =  \int \lambda r \chi_{\delta} |u|^2 - r \chi_{\delta} V|u|^2 - \mbox{Re} \big( (r \chi_{\delta})^{\prime}  \bar{u}\partial_ru \big) dx + \mbox{Re} \int \chi_{\delta} rf \bar{u}dx, $$
 whose right hand side converges as $ \delta \rightarrow 0 $ since we have already shown that $ r^{1/2} u \in L^2 $ while $ r V |u|^2 $ is integrable by the Cauchy-Schwarz inequality and \eqref{theorem_resolvent_2main_1} if $n\ge3$ or Assumption \ref{assumption_2} if $n=2$. 
 Letting $ \delta \rightarrow 0 $, we get (\ref{proof_C_4}). In particular, it shows that $ r^{\frac{1}{2}} \nabla u \in L^2 $.
 
 It remains to prove (\ref{proof_C_6}). Formally, it is  obtained by multiplying (\ref{Helmholtz}) by $ i A \bar{u} $, integrating and taking the real part. However $ A \bar{u} $ does not clearly belong to $ L^2 $ (we do not know that $r \partial_r u \in L^2 $), so we need more arguments to justify the formula. For $ \delta > 0 $ we replace $  A u $ by $ A (\delta A^2 + 1)^{-1} u $.  Note that, by Proposition \ref{lemma_resolvent_6}, $ {\mathcal G}^1 $ is stable by $ A(\delta A^2 + 1)^{-1} = A (\delta^{1/2}A+i)^{-1} (\delta^{1/2} A - i)^{-1} $. Using (\ref{Helmholtz}) and the fact 
 that $ f \in C_0^{\infty} (\R
 ^n) \subset D (A) $, we have   first
 \begin{align}
  2 \mbox{Re} \langle H u , i A (\delta A^2  + 1)^{-1} u \rangle = 2 \varepsilon \langle u , A (\delta A^2 + 1)^{-1} u \rangle + \mbox{Im} \langle A f , (\delta A^2 + 1) u \rangle .  \nonumber
 \end{align}
 Using that $ r^{1/2} u , r^{1/2} \partial_r u \in L^2 $ and that $ (\delta^{1/2}A\pm i)^{-1} r^{\frac{1}{2}} =  r^{\frac{1}{2}} \big(\delta^{1/2}A \pm i(1 \mp \delta^{1/2}/2) \big)^{-1}  $, we can let $ \delta \rightarrow 0 $ in this identity so that
 \begin{align}
    2 \mbox{Re} \langle H u , i A (\delta A^2  + 1)^{-1} u \rangle \rightarrow   2  \varepsilon \mbox{Im} \int  r^{\frac{1}{2}} \bar{u} r^{\frac{1}{2}} \partial_r u  dx  + \mbox{Im} \int  Af \bar{u} dx . \label{premierepartiereelle}
 \end{align}   
On the other hand,  
 since  $ A (\delta A^2 + 1)^{-1} u $ belongs to $ {\mathcal G}^1  $
 one can write
\begin{align}
  2 \mbox{Re} \langle H u , i A (\delta A^2  + 1)^{-1} u \rangle =  iQ_{H} \big(A (\delta A^2 + 1)^{-1} u , u \big)  - i Q_H (u,A (\delta A^2 + 1)^{-1} u ) .  \label{deuxiemepartiereelle}
\end{align}  
To let $ \delta \rightarrow 0 $ in this expression, we study separately the contribution of $ - \Delta $ and of $V$.
It is not hard to check that $ (\delta A^2 + 1)^{-1} u \rightarrow u $ in $ {\mathcal G}^1 $ as $ \delta \rightarrow 0 $ (by writing the resolvent of $A$ in term of $ e^{itA} $ as in the proof of Proposition \ref{lemma_resolvent_6}). Let $ Q_H = Q_{H_0} $ be the quadratic form associated to the Laplacian, i.e. to $V= 0$. Setting $ u_{\delta} = (\delta^{\frac{1}{2}}A + i)^{-1} u $ and using the  formulas  $[ \partial_j , (\delta^{\frac{1}{2}}A \pm i )^{-1} ] =i \delta^{\frac{1}{2}}(\delta^{\frac{1}{2}}A \pm i )^{-1} \partial_j (\delta^{\frac{1}{2}} A \pm i)^{-1} $, it is not hard to check that
\begin{align*}
 i Q_{H_0} \big(A (\delta A^2 + 1)^{-1} u , u \big)  =  i Q_{H_0} \big( A u_{\delta} , u_{\delta} \big) + O \Big( \big| \big|\delta^{\frac{1}{2}} A u_{\delta} \big| \big|_{{\mathcal H}^1} || u ||_{{\mathcal H}^1} \Big) .
\end{align*}
We omit the details such as the possible approximation of $u$ by a $ C_0^{\infty} $ function in $ {\mathcal G}^1 $. Next, we observe that $ \delta^{\frac{1}{2}} A u_{\delta} \rightarrow 0 $ in $ {\mathcal H}^1 $ as $ \delta \rightarrow 0 $: this is obvious on $ L^2 $ by the spectral theorem and remains true on $ {\mathcal H}^1 $ by using that $ \partial_j  (\delta^{\frac{1}{2}}A+i)^{-1} =   (\delta^{\frac{1}{2}}A+i- \delta^{\frac{1}{2}} i)^{-1} \partial_j  $. Thus, in the right hand side of (\ref{deuxiemepartiereelle}), the contribution of $ Q_{H_0} $ as $ \delta \rightarrow 0 $ is $ Q_{[H_0,iA]} (u,u) = 2 ||\nabla u||^2_{L^2}$. We next study the contribution of $V$ in the right hand side of  (\ref{deuxiemepartiereelle}) which reads
\begin{align}
 2 \int V \mbox{Re} \Big(  \bar{u}  (r \partial_r - n/2) (\delta A^2 + 1)^{-1} u \Big)  dx \stackrel{\delta \downarrow 0}{ \longrightarrow } 2 \int V \mbox{Re} \Big(  \bar{u}  (r \partial_r - n/2)  u \Big)  dx  =  \int V \nabla \cdot x ( |u|^2 )  dx .
 \nonumber
\end{align}
To take the limit $ \delta \downarrow 0 $, we use, when $ n \geq 3 $, that $ || r V \bar{u} ||_{L^2} \le C || u||_{\mathcal H^1} $ and that $ (\delta A^2 + 1)^{-1} $ goes strongly to $ 1 $ in $ {\mathcal H}^1 $. When $n=2$, we use that $ || r V \bar{u} ||_{L^2} \leq || r V^{\frac{1}{2}} ||_{L^{\infty}} ||V^{\frac{1}{2}} u||_{L^2}$ with $ ||V^{\frac{1}{2}} u||_{L^2} < \infty  $ for $ u \in {\mathcal G}^1 $ and that $ (\delta A^2 + 1 )^{-1} \rightarrow 1 $ strongly in $ {\mathcal G}^1 $ (to control the term with $n/2$). This is obtained by using $ \int_0^{\infty} e^{-t} ||e^{it\delta^{\frac{1}{2}}A} u - u ||_{{\mathcal G}^1} dt \rightarrow 0$, by dominated convergence (see Proposition \ref{lemma_resolvent_6}) and the strong continuity of $ e^{itA} $ on $ {\mathcal G}^1 $.
 To integrate by part in the limit and rewrite it as $ - \int \big( r \partial_r  V \big) |u|^2 dx $, we use that $  \int V \nabla \cdot x ( |v|^2 )  dx  $ depends continuously on $v \in {\mathcal G}^1 $ for the same reasons as the above convergence, and then approximate  $ v $ by $ C_0^{\infty} $ functions so that the integration by part holds in the sense of distributions and remains true in the limit since $  v \mapsto - \int \big( r \partial_r  V \big) |v|^2 dx  $ is also continuous on $ {\mathcal G}^1 $ thanks to the assumption that $ || r^{1/2} |\partial_r V|^{1/2} f ||_{L^2} \le C || f ||_{\mathcal H^1} $.
To sum up, we have  shown that the right hand side of (\ref{deuxiemepartiereelle}) goes to $ 2 ||\nabla u ||_{L^2}^2 - \int (r \partial_r V)|u|^2  dx$ as $ \delta \rightarrow 0 $ so, taking (\ref{premierepartiereelle}) into account, we obtain (\ref{proof_C_6}).
\end{proof}

 \begin{lemma}
\label{lemma_appendix_C_2}
Let $ n\geq 2  $,  $0<\varepsilon<\lambda$ and $v_\lambda=e^{-i\lambda^{\frac12}r}u$. Then one has
\begin{align}
\nonumber\int \Big(|\nabla v_\lambda|^2+ \varepsilon \lambda^{-\frac12}r|\nabla v_\lambda|^2\Big)dx
&=\int\Big(\partial_r(rV)|v_\lambda|^2
-\varepsilon\lambda^{-\frac12}rV|v_\lambda|^2
-\varepsilon\lambda^{-\frac12}\emph{Re}(\overline ue^{i\lambda r}\partial_rv_\lambda)\Big)dx\\
\label{proof_C_7}
&\quad\ +\emph{Re}\int\Big((n-1)f\overline u+\varepsilon \lambda^{-\frac12}rf\overline u+2rf\overline{e^{i\lambda^{\frac12}r}\partial_rv_\lambda}\Big)dx. 
\end{align}
\end{lemma}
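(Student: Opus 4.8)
The plan is to derive identity \eqref{proof_C_7} by rewriting the quantities appearing in Lemma~\ref{lemma_appendix_C_1} in terms of the conjugated function $v_\lambda = e^{-i\lambda^{1/2}r}u$. The starting point is the observation that $\nabla u = e^{i\lambda^{1/2}r}\big(\nabla v_\lambda + i\lambda^{1/2}\,\tfrac{x}{r}\,v_\lambda\big)$, so that $|\nabla u|^2 = |\nabla v_\lambda|^2 + \lambda|v_\lambda|^2 - 2\lambda^{1/2}\,\mathrm{Im}(\overline{v_\lambda}\,\partial_r v_\lambda)$ and, in particular, $|\nabla u|^2 - \lambda|u|^2 = |\nabla v_\lambda|^2 - 2\lambda^{1/2}\,\mathrm{Im}(\overline{v_\lambda}\partial_r v_\lambda)$. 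Likewise $\partial_r u = e^{i\lambda^{1/2}r}(\partial_r v_\lambda + i\lambda^{1/2}v_\lambda)$, giving $\mathrm{Re}(\overline{u}\,\partial_r u) = \mathrm{Re}(\overline{v_\lambda}\partial_r v_\lambda) - \lambda^{1/2}\,\mathrm{Im}(\overline{v_\lambda}v_\lambda)\cdot(\text{pure imaginary cancels})$; more precisely one tracks $\mathrm{Re}$ and $\mathrm{Im}$ of $\overline{u}\partial_r u$ in terms of $v_\lambda$, and similarly $r\partial_r\overline{u} = r\,e^{-i\lambda^{1/2}r}(\partial_r\overline{v_\lambda} - i\lambda^{1/2}\overline{v_\lambda})$.

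Next I would take the correct linear combination of the five identities \eqref{proof_C_2}--\eqref{proof_C_6} that cancels the unwanted terms. Concretely: \eqref{proof_C_6} contributes $2\|\nabla u\|^2 - \int(r\partial_r V)|u|^2$, which after substitution produces $\int |\nabla v_\lambda|^2$ plus cross terms and, crucially, the combination $2|\nabla u|^2$ rather than $|\nabla u|^2$; one uses \eqref{proof_C_2} (multiplied by $2$, i.e. twice the ``energy identity'') to reduce $2\int|\nabla u|^2$ to $\int|\nabla v_\lambda|^2 + \int(\text{lower order})$ while absorbing the $\lambda\int|u|^2$ and $\int V|u|^2$ terms. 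Then \eqref{proof_C_4} is used to generate the $\int\partial_r(rV)|v_\lambda|^2$ term (note $\partial_r(rV) = V + r\partial_r V$, so combining the $rV|u|^2$ term from \eqref{proof_C_4} with the $r\partial_r V|u|^2$ term from \eqref{proof_C_6} is exactly what yields $\partial_r(rV)$). The terms carrying the factor $\varepsilon$ come from the imaginary-part identities \eqref{proof_C_3} and \eqref{proof_C_5} together with the $-2\varepsilon\,\mathrm{Im}(\overline{u}\,r\partial_r u)$ term in \eqref{proof_C_6}; rewriting $r\partial_r u$ in terms of $v_\lambda$ turns $-2\varepsilon\,\mathrm{Im}(\overline{u}r\partial_r u)$ into a multiple of $\varepsilon\lambda^{-1/2}r|\nabla v_\lambda|^2$-type and $\varepsilon\lambda^{-1/2}rV|v_\lambda|^2$-type contributions after re-invoking \eqref{proof_C_5}. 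Finally, the right-hand side source terms $\mathrm{Re}\int\big((n-1)f\overline u + \varepsilon\lambda^{-1/2}rf\overline u + 2rf\overline{e^{i\lambda^{1/2}r}\partial_r v_\lambda}\big)dx$ are obtained by collecting the $\mathrm{Re}\int f\overline u$, $\mathrm{Re}\int rf\overline u$, $\mathrm{Im}\int rf\overline u$ and $\mathrm{Re}\int f(2r\partial_r\overline u + n\overline u)dx$ terms with the coefficients dictated by the above combination, using $2r\partial_r\overline u + n\overline u = 2r\,e^{-i\lambda^{1/2}r}\partial_r\overline{v_\lambda} - 2i\lambda^{1/2}r\overline u + n\overline u$ and checking that the $\mathrm{Re}$ of $-2i\lambda^{1/2}r f\overline u$ recombines with $\mathrm{Im}\int rf\overline u$ (from $\lambda^{1/2}$ times \eqref{proof_C_5}) to give the clean $(n-1)f\overline u$ coefficient.

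The main obstacle is bookkeeping: tracking real and imaginary parts through the conjugation $u = e^{i\lambda^{1/2}r}v_\lambda$ and identifying the precise scalar coefficients (including the $\lambda^{-1/2}$ scalings) so that all spurious terms cancel. A secondary, but genuine, point is the rigorous justification that every integral appearing is finite and that the manipulations are legitimate: here I would rely on Lemma~\ref{lemma_appendix_C_1}, which already establishes $r^{1/2}u, r^{1/2}\nabla u \in L^2$, together with \eqref{theorem_resolvent_2main_1} (for $n\ge 3$) or Assumption~\ref{assumption_2} (for $n=2$) to control the terms involving $V$ and $\partial_r(rV)$; since $v_\lambda$ differs from $u$ only by a unimodular factor, $|v_\lambda| = |u|$ and $|\nabla v_\lambda| \le |\nabla u| + \lambda^{1/2}|u|$, so no new integrability issue arises beyond what Lemma~\ref{lemma_appendix_C_1} provides. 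No limiting/regularization argument beyond those already carried out in Lemma~\ref{lemma_appendix_C_1} should be needed, since \eqref{proof_C_7} is a purely algebraic rearrangement of \eqref{proof_C_2}--\eqref{proof_C_6}.
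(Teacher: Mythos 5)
Your approach coincides with the paper's: there \eqref{proof_C_7} is obtained as the purely algebraic combination $\eqref{proof_C_6}-\eqref{proof_C_2}-2\lambda^{\frac12}\times\eqref{proof_C_5}+\varepsilon\lambda^{-\frac12}\times\eqref{proof_C_4}$, rewritten using the conjugation identity $|\nabla v_\lambda|^2=|\nabla u|^2+\lambda|u|^2-2\lambda^{1/2}\,\mathrm{Im}[(\partial_r u)\overline u]$, with no analytic input beyond Lemma \ref{lemma_appendix_C_1} --- exactly the strategy you describe. Two of your tentative coefficients are off (the cross term should read $|\nabla u|^2=|\nabla v_\lambda|^2+\lambda|v_\lambda|^2+2\lambda^{1/2}\,\mathrm{Im}(\overline{v_\lambda}\partial_r v_\lambda)$ with a plus sign, \eqref{proof_C_2} enters with coefficient $-1$ rather than $-2$, and \eqref{proof_C_3} is not needed in the combination), but these are precisely the bookkeeping details you flagged as remaining to be pinned down and do not affect the viability of the argument.
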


\begin{proof}
At first observe from the identity $|z-iw|^2=|z|^2+|w|^2-2 \mbox{Im}(z\overline{w})$ for $z,w\in\C$ that
\begin{align}
\label{proof_C_8}
|\nabla v_\lambda|^2=|\nabla u-i\lambda^{\frac12}|x|^{-1}x u|^2=|\nabla u|^2+\lambda |u|^2-2\lambda^{\frac12} \mbox{Im}[(\partial_r u)\overline u]. 
\end{align}
Then the formula \eqref{proof_C_7} is derived by computing
$\eqref{proof_C_6}-\eqref{proof_C_2}-2\lambda^{\frac12}\times\eqref{proof_C_5}+\varepsilon \lambda^{-\frac12}\times\eqref{proof_C_4}$ as follows. First, taking \eqref{proof_C_8} into account, $\eqref{proof_C_6}-\eqref{proof_C_2}$ reads
\begin{equation}
\begin{aligned}
\label{proof_C_8_1}
\int\Big(|\nabla v_\lambda|^2+2\lambda^{\frac12} \mbox{Im}[(\partial_r u)\overline u]-(\partial_r (rV))|u|^2-2 \varepsilon \mbox{Im} [(r\partial_ru)\overline{u}]\Big)dx\\
=\mbox{Re}\int\Big(f(2r\partial_r\overline u+(n-1)\overline u)\Big)dx. 
\end{aligned}
\end{equation}
To eliminate $2\lambda^{\frac12} \mbox{Im}[(\partial_r u)\overline u]$, we subtract $2\lambda^{\frac12}\times\eqref{proof_C_5}$ from \eqref{proof_C_8_1} to obtain
\begin{equation}
\begin{aligned}
\label{proof_C_9}
\int\Big(|\nabla v_\lambda|^2-(\partial_r (rV))|u|^2-2 \varepsilon \mbox{Im} [(r\partial_ru)\overline{u}]+2 \varepsilon \lambda^{\frac12} r|u|^2\Big)dx\\
=\int\Big(\mbox{Re}[f(2r\partial_r\overline u+(n-1)\overline u)]-2\lambda^{\frac12} \mbox{Im}  (rf\overline u)\Big)dx.
\end{aligned}
\end{equation}
Since $- \mbox{Im}  (rf\overline u)= \mbox{Re}(rf\overline{-iu})$ and $\partial_r u-i\lambda^{\frac12} u=e^{i\lambda^{\frac12} r}\partial_r v_\lambda$, the right hand side of \eqref{proof_C_9} reads
\begin{align}
\label{proof_C_9_1}
\mbox{Re}[f(2r\partial_r\overline u+(n-1)\overline u)]-2\lambda^{\frac12} \mbox{Im}  (rf\overline u)= \mbox{Re}\Big(2rf\overline{e^{i\lambda^{\frac12} r}\partial_r v_\lambda}+(n-1)f\overline u\Big). 
\end{align}
Using \eqref{proof_C_8} we next compute
\begin{equation}
\begin{aligned}
\label{proof_C_9_2}
-2\varepsilon \mbox{Im} [(r\partial_ru)\overline{u}]+2 \varepsilon \lambda^{\frac12} r|u|^2+\varepsilon \lambda^{-\frac12}(r|\nabla u|^2-\lambda r|u|^2)
= \varepsilon \lambda^{-\frac12}r|\nabla v_\lambda|^2.
\end{aligned}
\end{equation}
It is then seen from \eqref{proof_C_9_1} and \eqref{proof_C_9_2} that $\eqref{proof_C_9} + \varepsilon \lambda^{-\frac12}\times\eqref{proof_C_4}$ reads
\begin{equation}
\begin{aligned}
\label{proof_C_10}
\int\Big(|\nabla v_\lambda|^2-(\partial_r (rV))|u|^2+ \varepsilon \lambda^{-\frac12}r|\nabla v_\lambda|^2+\varepsilon \lambda^{-\frac12}rV|u|^2+\varepsilon \lambda^{-\frac12} \mbox{Re}(\overline u\partial_ru)\Big)dx\\
=\mbox{Re}\int\Big(2rf\overline{e^{i\lambda^{\frac12} r}\partial_r v_\lambda}+(n-1)f\overline u+\varepsilon\lambda^{-\frac12}rf\overline u\Big). 
\end{aligned}
\end{equation}
Finally, since $ \mbox{Re}(\overline u\partial_ru)=\mbox{Re}[\overline u(\partial_r u-i\lambda^{\frac12} u)]=\mbox{Re}(\overline ue^{i\lambda r}v_\lambda)$, \eqref{proof_C_10} is equivalent to \eqref{proof_C_7}. 
\end{proof}

\begin{proof}[Proof of Theorem \ref{theorem_resolvent_2main} (1)] Here we consider the case when  $ n \geq 3 $.
It suffices to show
\begin{align}
\label{proof_C_10_0}
||r^{-1}u||_{L^2}\le C||rf||_{L^2}
\end{align}
uniformly in $\lambda > 0$ and $\varepsilon>0$ or in $\lambda<0$ and $\varepsilon=0$. When $\varepsilon\ge\lambda>0$, \eqref{proof_C_2} and \eqref{proof_C_3} imply
\begin{align}
\label{proof_C_10_1}
\int\Big(|\nabla u|^2+V|u|^2\Big)dx\le (1+\lambda_+/\varepsilon)\int |fu|dx\le \delta_1 ||r^{-1}u||_{L^2}^2+\delta_1^{-1} ||rf||_{L^2}^2
\end{align}
for any $\delta_1>0$, where $\lambda_+=\max\{0,\lambda\}$. Note that if $\lambda< 0$ and $\varepsilon=0$, \eqref{proof_C_10_1} still holds with $1+\lambda_+/\varepsilon$ replaced by $1$. On the other hand, the hypothesis \eqref{assumption_1_1} and Hardy's inequality show
$$
\int\Big(|\nabla u|^2+V|u|^2\Big)dx\ge \delta_0\int |\nabla u|^2dx \ge \delta_0 C_{C_{\rm H}}\int r^{-2}|u|^2dx.
$$
Choosing $\delta_1>0$ so small that $\delta:=\delta_0 C_{C_{\rm H}}-\delta_1>0$ we obtain \eqref{proof_C_10_0}.  

We next let $\varepsilon<\lambda$. By Hardy's inequality, $||\nabla v_\lambda||_{L^2}\ge C_{C_{\rm H}}||r^{-1}v_\lambda||_{L^2}^2=C_{C_{\rm H}}||r^{-1}u||_{L^2}^2$. Hence it suffices to show \eqref{proof_C_10_0} that there exist $\delta,C_\delta>0$, independent of $\lambda$ and $\varepsilon$, such that the right hand side of \eqref{proof_C_7} is bounded from above by 
$(1-\delta)||\nabla v_\lambda||_{L^2}^2+(1-\delta)\varepsilon\lambda^{-\frac12} ||r^{\frac12}\nabla v_\lambda||_{L^2}^2+C_\delta||rf||_{L^2}^2$.  
By the hypothesis \eqref{assumption_1_2}, the first term of the right hand side of  \eqref{proof_C_7} satisfies
\begin{align}
\label{proof_C_12}
\int \partial_r(rV)|v_\lambda|^2dx
\le (1-\delta_0)||\nabla v_\lambda||_{L^2}^2 .
\end{align}
This allows to absorb the first term of the right hand side of  \eqref{proof_C_7} in the left hand side (of \eqref{proof_C_7}).
For the second term of the right hand side of  \eqref{proof_C_7}, it follows from \eqref{assumption_1_1} that 
\begin{align}
-\varepsilon\lambda^{-\frac12}\int rV|v_\lambda|^2dx\le \varepsilon\lambda^{-\frac12}(1-\delta_0) || \nabla (r^{\frac{1}{2}} v_{\lambda} ) ||_{L^2}^2
\label{avecHardyapoids}
\end{align}
provided we know that $ r^{\frac{1}{2}} v_{\lambda} $ belongs to $ {\mathcal H}^1 $. This follows from the fact that $ r^{\frac{1}{2}} u,r^{\frac{1}{2}}\nabla u\in L^2 $ by Lemma \ref{lemma_appendix_C_1} together with following weighted Hardy's inequality (see, {\it e.g.}, \cite[Proposition 8.1]{MSS} in which a simple proof can be found)
$$ || r^{-\frac{1}{2}} v ||_{L^2} \le C || r^{\frac{1}{2}} \nabla v ||_{L^2} . $$
Since $  \nabla (r^{\frac{1}{2}} v_{\lambda} ) =  r^{\frac{1}{2}} \nabla v_{\lambda} + \frac{1}{2} r^{-\frac{1}{2}} \frac{x}{|x|} v_{\lambda} $ and $\varepsilon<\lambda$, the right hand side of (\ref{avecHardyapoids}) is bounded by
 $$ \varepsilon\lambda^{-\frac12}(1-\delta_0) ||r^{\frac{1}{2}}  v_{\lambda}  ||_{L^2}^2 + C \varepsilon^{\frac{1}{2}} \left( \int |v_{\lambda}| |\partial_r v_{\lambda}| + r^{-1} |v_{\lambda}|^2 dx\right) .  $$
 The interest of this bound is that its first term can be absorbed in the left hand side of (\ref{proof_C_7}). For the other terms, using
 the Cauchy-Schwarz and Hardy inequalities, we can bound them by
 $$ \delta_1 || \nabla v_{\lambda} ||^2_{L^2} + C_1 \delta_1^{-1} \varepsilon || v_{\lambda} ||_{L^2}^2 . $$
for any $\delta_1>0$ with $C_1$ being independent of $\delta_1$ and $\varepsilon$.   
Then \eqref{proof_C_3} and Hardy's inequality imply
\begin{align*}
\varepsilon||v_\lambda||_{L^2}^2
=\varepsilon||u||_{L^2}^2
\le \int |fu|dx
\le C_1^{-1}\delta_1^2||\nabla v_\lambda||_{L^2}^2+C \delta_1^{-2} ||rf||_{L^2}^2.
\end{align*}
Summing up, we have shown  that (\ref{proof_C_7}) implies
\begin{align}
\nonumber \delta_0 \int \Big(|\nabla v_\lambda|^2+ \varepsilon \lambda^{-\frac12}r|\nabla v_\lambda|^2\Big)dx
& \leq 2 \delta_1 || \nabla v_{\lambda} ||_{L^2}^2 + C \delta_1^{-3} || r f  ||_{L^2}^2 + \int\Big(
-\varepsilon\lambda^{-\frac12}\mbox{Re}(\overline ue^{i\lambda r}\partial_rv_\lambda)\Big)dx\\
\label{reecriturelocale}
&\quad\ + \mbox{Re}\int\Big((n-1)f\overline u+\varepsilon \lambda^{-\frac12}rf\overline u+2rf\overline{e^{i\lambda^{\frac12}r}\partial_rv_\lambda}\Big)dx. 
\end{align}
To bound the two integrals in the right hand side, similar computations yield 
\begin{align}
\nonumber
\varepsilon\lambda^{-\frac12}\Big| \mbox{Re} \int e^{i\lambda^{\frac12} r}(\partial_rv_\lambda) \overline udx\Big|
\le \sqrt \varepsilon ||\nabla v_\lambda||_{L^2} ||v_\lambda||_{L^2}
&\le \delta_1 ||\nabla v_\lambda||_{L^2}^2+C\delta_1^{-3} ||rf||_{L^2}^2,\\
\nonumber
\Big|(n-1) \mbox{Re}\int f\overline udx\Big|
+\Big|2 \mbox{Re} \int rf\overline{e^{i\lambda^{\frac12}r}\partial_rv_\lambda}dx\Big|
&\le \delta_1 ||\nabla v_\lambda|||_{L^2}^2+C\delta_1^{-1} ||rf||_{L^2}^2,\\
\nonumber
\varepsilon\lambda^{-\frac12}\Big|\int rf\overline udx\Big|
\le \sqrt\varepsilon ||rf||_{L^2} ||u||_{L^2}^2
&\le \delta_1 ||\nabla v_\lambda||_{L^2}^2+C\delta_1^{-3} ||rf||_{L^2}^2. 
\end{align}
Together with (\ref{reecriturelocale}), these estimates show that 
$$
\delta_0 ||\nabla v_{\lambda}||_{L^2}^2 \leq 5\delta_1 ||\nabla v_\lambda||_{L^2}^2
+C\delta_1^{-3} ||rf||_{L^2}^2
$$
hence by  choosing $\delta_1$ so that 
$
\delta_0-5\delta_1>0
$, we obtain \eqref{proof_C_10_0} by using the Hardy inequality. 
\end{proof}

\begin{remark}
\label{remark_C}
It was claimed in \cite{BVZ} that $r^{-1}$ is $H$-supersmooth  under \eqref{theorem_resolvent_2main_1}, \eqref{assumption_1_1} and \eqref{assumption_1_2}. However, their argument used a weighted Hardy type inequality 
$$
\int rV|f|^2dx\le (1-\delta_0)||r^{\frac12}\nabla f||_{L^2},\quad \delta_0>0,\ f\in{\mathcal H}^1
$$
with an explicit constant $1-\delta_0$ to deal with the term $-\varepsilon\lambda^{-\frac12}\int rV|v_\lambda|^2dx$, 
which seems to be not an obvious consequence of \eqref{theorem_resolvent_2main_1}, \eqref{assumption_1_1} and \eqref{assumption_1_2}. 
\end{remark}

\begin{proof}[Proof of Theorem \ref{theorem_resolvent_2main} (2)] Next we consider the case $n=2$. 
It suffices to show 
\begin{align}
\label{proof_C_17}
||V^\frac12 u||_{L^2}\le C||V^{-\frac12}f||_{L^2}
\end{align}
uniformly in $\lambda\in\R$ and $\varepsilon>0$ or in $\lambda<0$ and $\varepsilon=0$, where we note that $V^{-\frac12}f\in L^2$ for $f\in C_0^\infty$ since $V^{-\frac12}\in L^2_{\mathrm{loc}}$. When $\varepsilon\ge \lambda$, \eqref{proof_C_10_1} implies, for any $\delta>0$, 
$$
||V^\frac12 u||_{L^2}^2\le 2\int|fu|dx\le \delta||V^\frac12 u||_{L^2}^2+\delta^{-1}||V^{-\frac12}f||_{L^2}^2. 
$$
Taking $\delta<1$ we obtain \eqref{proof_C_17}. When $\varepsilon<\lambda$, Assumption \ref{assumption_2} (2) and \eqref{proof_C_7} imply
\begin{align*}
||\nabla v_\lambda||_{L^2}^2+c||V^\frac12 v_\lambda||_{L^2}^2
\le \sqrt\varepsilon\int (|\overline u\partial_rv_\lambda|+|rf\overline u|)dx
+C\int (|f\overline u|+r|f\overline{\partial_rv_\lambda}|)dx
\end{align*}
with some $c,C>0$. As in the case when $n\ge3$, for any $\delta>0$ there exists $C_\delta>0$ such that
$$
||\nabla v_\lambda||_{L^2}^2+c||V^\frac12 v_\lambda||_{L^2}^2
\le \delta(||\nabla v_\lambda||_{L^2}^2+||V^\frac12v_\lambda||^2_{L^2})+C_\delta(||V^{-\frac12}f||_{L^2}^2+||rf||_{L^2}^2). 
$$
Choosing $\delta>0$ so small that $\delta<\min(1,c)$ and using the fact $r^2V\in L^\infty$, we obtain \eqref{proof_C_17}. 
\end{proof}

\end{document}